\documentclass[
final
 , nomarks
]{dmtcs-episciences}
\usepackage{latexsym,amssymb,amsfonts,amsmath,url,tikz}
\usepackage{amssymb,tikz}
\usepackage[all]{xy}
\usepackage{verbatim}
\usepackage{accents}

\usepackage[english]{babel}
\usepackage{wrapfig,listings,textcomp}

\newcommand\thickbar[1]{\accentset{\rule{.4em}{.8pt}}{#1}}
\newtheorem{theorem}{Theorem}[section]
\newtheorem{proposition}[theorem]{Proposition}
\newtheorem{lemma}[theorem]{Lemma}
\newtheorem{corollary}[theorem]{Corollary}

\def\cfdf_theorem{\par\noindent{\bf Theorem~\ref{thm:fonderflaass}\ } \ignorespaces}
\def\endcfdf_theorem{}

\newtheorem{definition}[theorem]{Definition}
\newtheorem{example}[theorem]{Example}

\newtheorem{remark}[theorem]{Remark}
\numberwithin{equation}{section}
\def\I{\mathcal{IC}}
\def\IS{\mathcal{IS}}

\def\C{\mathcal{C}}
\def\A{\mathcal{A}}

\def\cc{\xi}
\def\cov{{\rm cov}}
\def\rem{{\rm rem}}
\DeclareMathOperator*{\pro}{Pro}

\usepackage[utf8]{inputenc}
\usepackage{subfigure}

%


\author{Jessica Striker\thanks{This work was partially supported by a National Security Agency Grant (H98230-15-1-0041), the North Dakota EPSCoR National Science Foundation Grant (IIA-1355466), the NDSU Advance FORWARD program sponsored by National Science Foundation grant (HRD-0811239), and a grant from the Simons Foundation/SFARI (527204, JS).}}
\title{Rowmotion and generalized toggle groups}
\affiliation{
North Dakota State University}
\keywords{Rowmotion, toggle group, poset, convex geometry}
\received{2017-9-27}
\revised{2018-4-12}
\accepted{2018-5-7}
\begin{document}
\publicationdetails{20}{2018}{1}{17}{3962}
\maketitle
\begin{abstract}
We generalize the notion of the toggle group, as defined in [P.\ Cameron--D.\ Fon-der-Flaass '95] and further explored in [J.\ Striker--N.\ Williams '12], from the set of order ideals of a poset to any family of subsets of a finite set. We prove structure theorems for certain finite generalized toggle groups, similar to the theorem of Cameron and Fon-der-Flaass in the case of order ideals. We apply these theorems and find other results on generalized toggle groups in the following settings: chains, antichains, and interval-closed sets of a poset; independent sets, vertex covers, acyclic subgraphs, and spanning subgraphs of a graph; matroids and convex geometries. We generalize rowmotion, an action studied on order ideals in [P.\ Cameron--D.\ Fon-der-Flaass '95] and [J.\ Striker--N.\ Williams '12], to a map we call cover-closure on closed sets of a closure operator. We show that cover-closure is bijective if and only if the set of closed sets is isomorphic to the set of order ideals of a poset, which implies rowmotion is the only bijective cover-closure map.
\end{abstract}




\maketitle

\section{Introduction}
In~\cite{fonderflaass}, P.~Cameron and D.~Fon-der-Flaass defined a permutation group on order ideals (or monotone Boolean functions) of a poset 
and proved that its group structure is the symmetric or alternating group whenever the Hasse diagram of the poset is connected.
In~\cite{prorow}, N.~Williams and the author named this permutation group the \emph{toggle group} and used the toggle group to prove results on the orbit structure of certain actions, finding bijective proofs for instances of the \emph{cyclic sieving phenomenon} of V.~Reiner, D.~Stanton, and D.~White~\cite{CSP}. 
In~\cite{fonderflaass}, Cameron and Fon-der-Flaass also studied a certain convex-closure action on order ideals and proved it could be expressed as the toggle group action in which one composes all the toggles of the poset from top to bottom. In~\cite{prorow}, N.~Williams and the author named this action \emph{rowmotion} and proved a theorem on
the equivariance of 
rowmotion and a toggle group action they called \emph{promotion}, loosely defined as toggling the elements of the poset from left to right. (See~\cite{prorow} for further history on rowmotion.) Then in~\cite{DilksPechenikStriker}, K.~Dilks, O.~Pechenik, and the author generalized this result from two to $n$ dimensions.
In~\cite{homomesy}, J.\ Propp and T.~Roby began investigating a phenomenon they called \emph{homomesy}. They built on the framework of~\cite{prorow} 
to prove instances of homomesy on the order ideals of the product of two chains poset under both rowmotion and promotion.
Subsequently, D.~Einstein and J.~Propp~\cite{EinsteinPropp} and D.~Grinberg and T.~Roby~\cite{Darij_Tom1, Darij_Tom2} extended the toggle group to the piecewise-linear and birational realms.

In this paper, we begin to generalize the above story beyond the setting of order ideals by noting that the particular structure of order ideals in a poset is unnecessary in the definition of the toggle group; the essential structure is  that an order ideal is a subset of poset elements. 
Thus, given a finite ground set $E$, we define the \emph{(generalized) toggle group} $T(\mathcal{L})$ on any family of subsets $\mathcal{L}\subseteq 2^E$; see Definitions~\ref{def:gentoggle} and~\ref{def:togglegroup}.

The papers cited above are part of
the emerging area of \emph{dynamical algebraic combinatorics}, defined as the study of dynamical systems arising from algebraic combinatorics, which was formalized at the American Institute of Mathematics (AIM) workshop on the topic~\cite{DACAIM}. 
Note that the generalized toggling framework introduced in this paper has already proved useful in papers stemming from the AIM workshop~\cite{EFGJMPRS, MJoseph}.

Some goals of studying the toggle group, in the order ideal setting and beyond, are the following.
\begin{enumerate}
\item Understand how to write cyclic group actions, such as rowmotion on order ideals, as toggle group elements.
\item Understand dynamical properties, such as orbit structure, cyclic sieving, and homomesy, of various toggle group elements.
\item Understand the abstract group structure of the toggle group, as Cameron and Fon-der-Flaass did for the order ideal toggle group.
\end{enumerate}
This paper makes contributions in all three areas, with a focus on the last.

 \smallskip
The main theorems of Section~\ref{sec:tog_construction}, highlighted below, contribute to Goal $(3)$ by giving conditions which guarantee the abstract group structure of a generalized toggle group is a direct product or contains the alternating group. (See Definitions~\ref{def:togdisjoint} and~\ref{def:pleasant} for definitions of $\oplus$, $\otimes$, and inductively toggle-alternating.) Recall $E$ is a finite set and $\mathcal{L}\subseteq 2^E$.

\begin{timestheorem} 
If $\mathcal{L}= \mathcal{L}_1\oplus\mathcal{L}_2$ or $\mathcal{L}= \mathcal{L}_1\otimes\mathcal{L}_2$, then $T(\mathcal{L}) = T(\mathcal{L}_1)\times T(\mathcal{L}_2)$.
\end{timestheorem}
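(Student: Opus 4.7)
The plan is to construct the natural map $\phi : T(\mathcal{L}_1)\times T(\mathcal{L}_2)\to T(\mathcal{L})$ that acts componentwise on the two halves of the ground set, then show it is an isomorphism of permutation groups. The key structural point, which I expect to read off from Definition~\ref{def:togdisjoint}, is that in both the $\oplus$ and $\otimes$ cases the ground set $E$ of $\mathcal{L}$ partitions as $E = E_1 \sqcup E_2$, where $E_i$ is the ground set of $\mathcal{L}_i$, and every $S\in\mathcal{L}$ is determined by the pair $(S\cap E_1,\, S\cap E_2)$ of its traces on each half.

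The first and central step is to show that for each $e\in E_1$, the generator $t_e$ of $T(\mathcal{L})$ acts only on the $E_1$-part of any $S\in\mathcal{L}$, and that this action agrees with the action of $t_e$ in $T(\mathcal{L}_1)$; the analogous statement for $e\in E_2$ follows by symmetry. This reduces to checking that the ``toggleability'' of $e$ on $S$ in $\mathcal{L}$ depends only on $S\cap E_i$ and matches the toggleability in $\mathcal{L}_i$, which I would verify by unwinding the definitions of $\oplus$ and $\otimes$.

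Once this step is in hand, toggles at elements of $E_1$ commute with toggles at elements of $E_2$, and the subgroup of $T(\mathcal{L})$ generated by toggles at $E_i$ is naturally identified with $T(\mathcal{L}_i)$. The map $\phi$ is then a well-defined homomorphism, and it is surjective since each generator $t_e$ of $T(\mathcal{L})$ lies in its image as $(t_e,\mathrm{id})$ or $(\mathrm{id},t_e)$. For injectivity, if $\phi(g_1,g_2)$ is the identity on $\mathcal{L}$, then each $g_i$ acts trivially on every $E_i$-part that arises; since these parts exhaust $\mathcal{L}_i$ (again by the definitions of $\oplus$ and $\otimes$), we conclude $g_i = \mathrm{id}$.

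The main obstacle will be the toggleability verification in the first step, particularly in the $\otimes$ setting. There one must show that the condition for toggling $e\in E_1$ on $S = S_1\cup S_2 \in \mathcal{L}$ to succeed is independent of the companion $S_2\in\mathcal{L}_2$ and coincides exactly with the toggleability of $e$ on $S_1$ in $\mathcal{L}_1$. This is essentially the assertion that $\otimes$ is a genuine product operation on subset families, and once it is established, the remaining bookkeeping in Steps 2 and 3 is routine.
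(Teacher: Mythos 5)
Your plan is essentially the paper's argument for the $\otimes$ case. There the paper likewise represents each element of $\mathcal{L}$ as an ordered pair $(X_1',X_2')$ with $X_i'\in\mathcal{L}_i'$ and observes that the two sets of toggles act on their respective components; the ``main obstacle'' you single out (that toggleability of $e\in E_1'$ on $X_1'\cup X_2'$ depends only on $X_1'$) is exactly the point the paper leaves implicit, and it does go through: since $E_1'\cap E_2'=\emptyset$ and $\mathcal{L}=\{Y_1\cup Y_2 \mid Y_1\in\mathcal{L}_1,\ Y_2\in\mathcal{L}_2\}$, membership of $(X_1'\cup\{e\})\cup X_2'$ in $\mathcal{L}'$ is decided trace by trace.

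However, your uniform framing does not fit the $\oplus$ case, and as written that half of the plan would fail. When $\mathcal{L}=\mathcal{L}_1\oplus\mathcal{L}_2$, Definition~\ref{def:togdisjoint} makes $\mathcal{L}$ the \emph{union} $\mathcal{L}_1\cup\mathcal{L}_2$, not a product: $\mathcal{P}_{\mathcal{L}}$ is the disjoint union $\mathcal{P}_{\mathcal{L}_1}+\mathcal{P}_{\mathcal{L}_2}$, and an element $X\in\mathcal{L}_2'$ has trace $X\cap E_1'=\emptyset$, which need not belong to $\mathcal{L}_1'$ at all. So the componentwise map $\phi(g_1,g_2)(S)=g_1(S\cap E_1)\cup g_2(S\cap E_2)$ is not well defined there, and ``$S$ is determined by its traces'' does not give an identification of $\mathcal{L}$ with $\mathcal{L}_1\times\mathcal{L}_2$ (the cardinalities already disagree). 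The mechanism producing the direct product in the $\oplus$ case is different and simpler: for $e\in E_1'$ the toggle $t_e$ fixes every element of $\mathcal{L}_2'$ and vice versa, so $T(\mathcal{L})$ is generated by two commuting subgroups supported on the disjoint pieces $\mathcal{L}_1'$ and $\mathcal{L}_2'$, hence is their internal direct product. That is the paper's one-line argument for $\oplus$; your write-up needs to treat the two cases by these two separate mechanisms rather than run both through a single componentwise map.
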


\begin{togaltdef} 
We say $\mathcal{L}$ is \emph{toggle-alternating} if $T(\mathcal{L})$ contains the alternating group ${A}_{\mathcal{L}}$.
\end{togaltdef}

\begin{maintheorem} \it
If $\mathcal{L}$ is inductively toggle-alternating, then it is toggle-alternating.
\end{maintheorem}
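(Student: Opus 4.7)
The plan is induction on $|E|$, the size of the ground set. Definition~\ref{def:pleasant} of ``inductively toggle-alternating'' presumably furnishes a base case together with a recursive step that decomposes $\mathcal{L}$ into strictly smaller families $\mathcal{L}'$ each of which is itself inductively toggle-alternating. By induction each piece contributes an alternating subgroup $A_{\mathcal{L}'}$ inside $T(\mathcal{L})$ (acting on the subfamily and fixing its complement pointwise), and the job is to combine these into all of $A_\mathcal{L}$. A natural such decomposition singles out an element $e \in E$ and partitions $\mathcal{L}$ as $\mathcal{L}_0 \sqcup \mathcal{L}_1$ according to whether $e$ belongs to the subset; the toggle $t_e$ then acts as an involution partially pairing elements of $\mathcal{L}_0$ with those of $\mathcal{L}_1$.

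The key group-theoretic tool I would invoke is Jordan's classical theorem: a primitive permutation group on a finite set containing a $3$-cycle contains the full alternating group. A $3$-cycle is supplied for free by the inductive hypothesis, since the alternating group on either piece contains $3$-cycles (once the piece has size at least three), and such a $3$-cycle viewed as a permutation of all of $\mathcal{L}$ is still a $3$-cycle because it fixes the other piece pointwise. So the substantive work reduces to verifying that $T(\mathcal{L})$ acts primitively on $\mathcal{L}$.

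The main obstacle will be establishing primitivity. The partition $\mathcal{L}_0 \sqcup \mathcal{L}_1$ is itself a natural candidate block system, and Theorem~\ref{thm:times} shows that for $\oplus$ and $\otimes$ decompositions exactly this phenomenon does occur and the toggle group really does split as a direct product. The hypotheses packaged into Definition~\ref{def:pleasant} must therefore be used in an essential way to rule out such splittings, most likely by exhibiting a toggle (or a short composition of toggles) whose action on $\mathcal{L}$ mixes the two halves in a way incompatible with any non-trivial block system. Once primitivity is in hand, transitivity follows and Jordan's theorem delivers $A_\mathcal{L} \subseteq T(\mathcal{L})$ immediately; the whole proof then comes down to a careful unpacking of the inductive definition to produce the block-breaking element at each stage.
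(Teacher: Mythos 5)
There are two genuine gaps here, and together they mean the proposal does not constitute a proof. First, your opening step --- ``by induction each piece contributes an alternating subgroup $A_{\mathcal{L}'}$ inside $T(\mathcal{L})$ (acting on the subfamily and fixing its complement pointwise)'' --- is not automatic. The inductive hypothesis gives $A_{\mathcal{L}_e}\leq T(\mathcal{L}_e)$, but an element of $T(\mathcal{L}_e)$ is a word in the toggles of the \emph{smaller} family, and the corresponding word in $T(\mathcal{L})$ acts simultaneously on $\mathcal{L}_{\thickbar{e}}$; there is no a priori reason it fixes $\mathcal{L}_{\thickbar{e}}$ pointwise. The paper closes this gap by restricting the subgroup generated by $\{t_f : f\neq e\}$ to the two halves and using simplicity of $A_{\mathcal{L}_e}$ together with the size comparison $|\mathcal{L}_e|\geq|\mathcal{L}_{\thickbar{e}}|$ (which is exactly why Definition~\ref{def:pleasant} offers the choice between conditions $(2\mathrm{a})$ and $(2\mathrm{b})$) to conclude that the pointwise stabilizer $K$ of $\mathcal{L}_{\thickbar{e}}$ really does induce at least $A_{\mathcal{L}_e}$ on $\mathcal{L}_e$. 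Without this step you do not even have your $3$-cycle inside $T(\mathcal{L})$.

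Second, your route through Jordan's theorem requires primitivity of $T(\mathcal{L})$ on $\mathcal{L}$, and you explicitly defer this (``the main obstacle will be establishing primitivity \dots the whole proof then comes down to \dots'') without supplying the argument. That deferred step is the theorem; it is not clear how to extract primitivity directly from Definition~\ref{def:pleasant}, and the paper never does. Instead it uses a more elementary combination lemma: once $K$ induces at least $A_{\mathcal{L}_e}$ on $\mathcal{L}_e$ and fixes the rest, the hypotheses $\mathcal{L}_e\cup t_e(\mathcal{L}_e)=\mathcal{L}$ and $\mathcal{L}_e\cap t_e(\mathcal{L}_e)\neq\emptyset$ say precisely that $K$ and its conjugate $K^{t_e}$ are (at least) alternating groups with overlapping supports covering all of $\mathcal{L}$, and two such groups generate at least $A_{\mathcal{L}}$. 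Those two set-theoretic conditions in Definition~\ref{def:pleasant} are tailored for this lemma, not for a primitivity argument. A smaller misreading: only one of $\mathcal{L}_e$, $\mathcal{L}_{\thickbar{e}}$ is assumed inductively toggle-alternating; the other half is reached by conjugating by $t_e$, not by a second application of the inductive hypothesis.
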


The proofs follow the proof of Cameron and Fon-der-Flaass~\cite{fonderflaass} for order ideal toggle groups. The essence of these theorems is that they isolate the qualities of the set of order ideals which produce this toggle group description, so that we may apply the same structure description to other situations.

One may wonder whether it is useful to know that a toggle group is the symmetric or alternating group and how one might determine which of the two possibilities it is. In the case of the order ideal toggle group, Cameron and Fon-der-Flaass found no simple criterion for determining whether a toggle group is symmetric or alternating~\cite{fonderflaass}. But they did observe that in many cases, such as when there is a unique maximal or minimal element in the poset, one may find a toggle which is a transposition. Such a toggle group must be the symmetric group, since it contains an odd permutation, and thus cannot be the alternating group.
The abstract structure of the toggle group may also be important when looking for bijections between subsets of $\mathcal{L}$, since if $T(\mathcal{L})$ is $\mathfrak{S}_{\mathcal{L}}$, this guarantees the desired permutation of elements can be realized as a product of toggles, while if $T(\mathcal{L})$ is $A_{\mathcal{L}}$, the permutation may be realized only if it is even. See Remark~\ref{remark:bij} for further discussion. Finally, such toggle groups give presentations for the symmetric and alternating groups that may be of independent interest.

\smallskip
We apply Theorems~\ref{thm:times} and~\ref{thm:toggpstructure} to various combinatorially interesting sets of subsets to obtain the following corollary, which is actually a compilation of corollaries proved in Section~\ref{sec:ex}.

\begin{maincorollary}
Toggle groups of the following sets have a group structure given by Theorems~\ref{thm:times} and~\ref{thm:toggpstructure}:
\begin{itemize}
\item Order ideals of a finite poset (proven in~\cite[Theorem 4]{fonderflaass}),
\item Chains of a finite poset (see Section~\ref{sec:chains}),
\item Antichains of a finite poset (see Section~\ref{sec:antichains}),
\item Interval-closed sets of a certain family of finite posets (see Section~\ref{sec:ic}),
\item Independent sets of a finite graph (see Section~\ref{sec:indepsets}), and
\item Vertex covers of a finite graph (see Section~\ref{sec:vertexcovers}).
\end{itemize}
\end{maincorollary}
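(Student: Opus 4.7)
The plan is to verify, for each of the six families listed, that the hypotheses of either Theorem~\ref{thm:times} or Theorem~\ref{thm:toggpstructure} apply. The strategy splits naturally into two halves that together cover every case: first, whenever the underlying combinatorial object (poset or graph) decomposes as a disjoint union, show that the corresponding family of subsets decomposes as $\mathcal{L} = \mathcal{L}_1\oplus\mathcal{L}_2$ or $\mathcal{L}_1\otimes\mathcal{L}_2$, so that Theorem~\ref{thm:times} yields the direct product factorization; second, when the underlying object is connected, show that $\mathcal{L}$ is inductively toggle-alternating, so that Theorem~\ref{thm:toggpstructure} gives $T(\mathcal{L})\supseteq A_{\mathcal{L}}$. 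The order-ideal case is the prototype supplied by \cite{fonderflaass}, and each subsequent case is handled in its own subsection of Section~\ref{sec:ex}.

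For the disjoint-union reductions, I would argue uniformly as follows. For chains and antichains of $P = P_1\sqcup P_2$, any chain or antichain decomposes uniquely into a chain/antichain in $P_1$ and one in $P_2$, because all cross-component pairs are incomparable; this produces the $\oplus$ (or $\otimes$) structure on the nose. For independent sets and vertex covers of $G = G_1\sqcup G_2$, the same decomposition holds because edges do not cross components, so independence and covering each factor through the components. For interval-closed sets, the same style of argument works on the restricted poset family mentioned in the statement, where the decomposition of the poset lifts cleanly to a decomposition of interval-closed sets.

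For the inductive part, the idea is to select, for each family, a distinguished element $e$ of the ground set whose toggle behaves well, and to identify the two residual families obtained by conditioning on whether $e$ is present; the aim is that each residual family is of the same type on a strictly smaller connected structure, so that induction applies and the definition of inductively toggle-alternating is satisfied. For chains of a connected poset, a minimal (or maximal) element $e$ works, because any chain either omits $e$ (and lives in $P\setminus\{e\}$) or contains $e$ at an endpoint; a dual analysis handles antichains. For independent sets of a connected graph, toggling a vertex $e$ naturally splits the family into independent sets of $G\setminus\{e\}$ and independent sets of $G\setminus N[e]$; vertex covers are handled by the standard duality with independent sets. Interval-closed sets require the special family hypothesis to guarantee that the residual posets remain in the family.

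The main obstacle is the inductive step: one must verify the precise form of Definition~\ref{def:pleasant} rather than just a heuristic ``smaller instance'' argument, and the base cases (single-element or two-element ground sets) must be checked by hand since Theorem~\ref{thm:toggpstructure} provides no content until the recursion bottoms out. I expect the most delicate case to be interval-closed sets, where the restriction to a ``certain family'' of posets is already signaled in the statement, suggesting that the naive inductive step fails outside that family; identifying exactly which posets admit the induction is the part of the argument that requires the most new combinatorial input, while the other five cases should follow the Cameron--Fon-der-Flaass template closely.
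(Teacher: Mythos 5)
Your two-step strategy (factor via Theorem~\ref{thm:times}, then verify inductive toggle-alternation for the irreducible pieces, with computer-checked base cases) is exactly the paper's, but your treatment of the chain case is wrong. If $P=P_1+P_2$ is a disjoint union, a chain cannot meet both components --- cross-component pairs are \emph{incomparable}, and a chain consists of mutually \emph{comparable} elements --- so $\mathcal{C}(P)=\mathcal{C}(P_1)\cup\mathcal{C}(P_2)$ is a union sharing $\emptyset$, not a product of independent choices, and by Lemma~\ref{lem:chaintog} the toggles $t_{p_1},t_{p_2}$ with $p_i\in P_i$ do \emph{not} commute (they disagree on $\emptyset$: one sends it to $\{p_1\}$, the other to $\{p_2\}$). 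The decomposition that actually triggers Theorem~\ref{thm:times} for chains is the \emph{ordinal sum} $P=P_1\oplus P_2$, where every element of $P_1$ lies below every element of $P_2$; correspondingly the toggle-alternating chain families are indexed by posets that are \emph{not ordinal sums} (a class that includes disconnected posets such as antichains), not by connected posets. The choice of the distinguished element $e$ is also more delicate than ``any minimal element'': the paper picks a minimal element \emph{other than} a specific minimal $x$ incomparable to some non-minimal $y$, precisely so that $P\setminus e$ is still not an ordinal sum and the induction stays inside the family.

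Two further gaps. First, the base cases of Definition~\ref{def:pleasant} are all families with essentialized ground set of size at most $4$ --- not one- or two-element ground sets --- because the proof of Theorem~\ref{thm:toggpstructure} relies on simplicity of the relevant alternating groups and $A_4$ is not simple; the paper disposes of these by exhaustive \texttt{SageMath} computation over all connected posets, non-ordinal-sum posets, connected graphs, etc., on at most four elements, so a by-hand check at sizes $1$ and $2$ leaves the induction without a valid floor. Second, for interval-closed sets the disjoint-union reduction is not the only one needed: an extremal-atomic element $m$ (a maximal element covering only minimal elements, or dually) has $t_m$ commuting with every other toggle by Lemma~\ref{lem:ic}, so one first splits off $\mathfrak{S}_2$ factors, and only the remaining ``strongly-extremal-atomic-free'' posets of Corollary~\ref{cor:icsets} are shown to be inductively toggle-alternating --- this is the extra combinatorial input you correctly anticipated but did not supply. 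The other cases (order ideals, antichains, independent sets, vertex covers) go essentially as you propose, with vertex covers using condition (2a) of Definition~\ref{def:pleasant} rather than (2b), since a vertex can always be toggled into a cover but not always out.
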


\noindent
Notable other results/connections of Section~\ref{sec:ex} include:
\begin{itemize}
\item Theorems~\ref{thm:chaineq} and \ref{thm:antichaineq}, which prove the equivariance of some toggle orders in the chain and antichain toggle groups, respectively (this falls under Goal $(2)$ above and is analogous to the main result of~\cite{prorow} on the equivariance of promotion and rowmotion);
\item Toggle commutation lemmas in all of the toggle groups listed in Corollary~\ref{cor:mastercor}, as well as the acyclic subgraph toggle group, the spanning subgraph toggle group, and the matroid independent set toggle group;
\item Characterizations of matroids and convex geometries in terms of toggles (Remarks~\ref{remark:matroid1} and~\ref{remark:cxgeom}).
\item Remarks on connections between specific generalized toggle groups and research in statistical physics and commutative algebra (Remarks~\ref{remark:indep} and \ref{remark:vc});
\end{itemize}

\smallskip
In Section~\ref{sec:row}, we generalize rowmotion on order ideals to a map we call \emph{cover-closure}.

\begin{maindefinition} 
Let $E$ be a  set with closure operator $\tau$ whose set of closed sets is $\mathcal{L}\subseteq 2^E$. 
For $X\in 2^E$, 
define $\cov(X)=\{e\in E\setminus X \ | \ X\cup\{e\}\in\mathcal{L} \}$.
Call $\cov(X)$ the set of \emph{covers} of $X$.
Then we define \emph{cover-closure} $\cc:\mathcal{L} \rightarrow \mathcal{L}$ as $\cc(X)=\tau(\cov(X))$.
\end{maindefinition}

When restricted to order ideals of a poset, we show in Lemma~\ref{lem:ccrow} cover-closure is the usual rowmotion operation, defined in~\cite{fonderflaass} and elsewhere as the order ideal generated by the minimal elements of the poset which are not already in the order ideal. 

Our {second main theorem}, stated below, gives a `negative' result in the direction of Goal $(1)$. It characterizes bijective cover-closure maps, yielding Corollary~\ref{cor:rowcc} that rowmotion is the only bijective cover-closure map.

\begin{ccthm}
Given a finite ground set $E$ and a closure operator $\tau$ with set of closed sets $\mathcal{L}\subseteq 2^E$,
cover-closure $\cc:\mathcal{L}\rightarrow\mathcal{L}$ is bijective if and only if 
$\mathcal{L}$ is isomorphic to
the set of order ideals $J(P)$ for some poset~$P$.
\end{ccthm}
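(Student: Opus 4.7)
The plan is to handle the two directions separately. For the backward direction, suppose $\mathcal{L}$ is isomorphic to $J(P)$ for some poset $P$. Transporting $\cc$ through the isomorphism, Lemma~\ref{lem:ccrow} identifies cover-closure with the classical rowmotion on $J(P)$, which is well-known to be a bijection (e.g.\ as a product of toggle involutions, see~\cite{fonderflaass}, or by direct construction of an inverse).

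For the forward direction I would argue by contrapositive, assuming $\mathcal{L}$ is not isomorphic to $J(P)$ for any poset $P$ and producing a failure of bijectivity for $\cc$. First I would dispose of the case of \emph{inseparable} elements: if distinct $e, f \in E$ satisfy $\tau(\{e\}) = \tau(\{f\})$, then $e$ and $f$ lie in exactly the same members of $\mathcal{L}$, so neither ever belongs to $\cov(X)$ (adding one would force the other to be present already). Consequently the image of $\cc$ lies in the family $\{\tau(S) : S \subseteq E \setminus \{e,f\}\}$, which misses $\tau(\{e\})$, so $\cc$ is not surjective. Assuming no such pair exists, I would define a partial order on $E$ by $e \le f$ iff $e \in \tau(\{f\})$ and observe, via Birkhoff's theorem, that $\mathcal{L}$ coincides with the set of order ideals of $(E, \le)$ precisely when $\mathcal{L}$ is closed under union. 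The remaining task is then: assuming there exist $A, B \in \mathcal{L}$ with $A \cup B \notin \mathcal{L}$, show that $\cc$ is not injective.

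The main obstacle is producing an explicit collision from this generic union-closure failure. One direct approach is to choose $A, B$ with $|A \cup B|$ minimal among such failures, locate a ``forced'' element $c \in \tau(A \cup B) \setminus (A \cup B)$, and build two distinct preimages of $\tau(A \cup B)$ from the neighborhoods of $A$ and $B$, using minimality to rule out interference. A cleaner indirect route I would try first: since $\cc = \tau \circ \cov$, bijectivity of $\cc$ forces $\cov:\mathcal{L}\to 2^E$ to be injective on $\mathcal{L}$, so it would suffice to establish the technical lemma that injectivity of $\cov$ alone forces $\mathcal{L}$ to be closed under union. This reformulation pinpoints the combinatorial essence: the single-element-extension data of $\mathcal{L}$ determines a closed set precisely when $\mathcal{L}$ has the order-ideal form.
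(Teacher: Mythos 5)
Your backward direction matches the paper's (transport to $J(P)$ and invoke Lemma~\ref{lem:ccrow}), and your reduction of the forward direction to ``$\cc$ bijective $\Rightarrow$ $\mathcal{L}$ closed under union, then apply Birkhoff'' lands at the same endpoint the paper reaches. But the step that carries essentially all of the content of the theorem --- deriving union-closure from bijectivity --- is exactly the step you leave unproven. Route (a) is only a description of a hoped-for construction (``locate a forced element,'' ``build two distinct preimages,'' ``use minimality to rule out interference'') with no argument that the construction exists or works. Route (b) defers to a ``technical lemma'' that injectivity of $\cov$ alone forces union-closure; this is formally \emph{stronger} than what you need (injectivity of $\cov$ is a strictly weaker hypothesis than injectivity of $\cc=\tau\circ\cov$, since $\tau$ may identify distinct $\cov$-values), its truth is not evident, and no proof is offered. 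For comparison, the paper's argument here is genuinely substantive: it first shows the essentialization $\mathcal{L}'$ is a convex geometry (Definition~\ref{def:cxgeom}), proves the containment $\rem(\cc(X))\subseteq\cov(X)$, upgrades it to equality by a global count of the Hasse-diagram edges of the toggle poset $\mathcal{P}_{\mathcal{L}'}$ (using that $\cc$ permutes $\mathcal{L}'$, so $\sum_X|\rem(\cc(X))|=\sum_X|\rem(X)|=\sum_X|\cov(X)|$), and then complements every set to pass to the dual closure system, where the identity $\widehat{\cov}(E'\setminus X)=\rem(X)=\cov(\cc^{-1}(X))$ shows the dual cover-closure is also bijective; the dual is therefore also a convex geometry, hence intersection-closed, which is union-closure of $\mathcal{L}'$. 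None of this (nor any substitute for it) appears in your proposal, so the proof is incomplete at its core.

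Two smaller issues. First, your ``inseparable elements'' preprocessing sits awkwardly against your own backward direction: if $\tau(\{e\})=\tau(\{f\})$ with $e\neq f$, the family $\mathcal{L}$ can still be isomorphic to some $J(P)$ under the paper's notion of isomorphism (which collapses such pairs in the essentialization --- e.g.\ $\mathcal{L}=\{\emptyset,\{e,f\}\}$), yet you argue $\cc$ fails to be bijective in that case; so this case does not belong under the hypothesis ``not isomorphic to any $J(P)$,'' and your two halves contradict each other there. The paper avoids this by working throughout with the essentialization $(\mathcal{L}',E')$ and its induced closure operator. Second, within that same step, the claim that the family $\{\tau(S): S\subseteq E\setminus\{e,f\}\}$ misses $\tau(\{e\})$ is not justified: nothing prevents $\tau(S)=\tau(\{e\})$ for some $S$ avoiding $e$ and $f$ (take $\tau(\{g\})=\{e,f,g\}$), so non-surjectivity does not follow as stated.
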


The paper is structured as follows. Section~\ref{sec:tog_construction} defines generalized toggle groups and proves Theorems \ref{thm:times} and~\ref{thm:toggpstructure}, which together give a structure description of the toggle group for certain families. 
Section~\ref{sec:ex} gives many examples of generalized toggle groups, proves Corollary~\ref{cor:mastercor}, and gives  many further results and connections. Finally, Section~\ref{sec:row} defines cover-closure, relates it to rowmotion, and proves Theorem~\ref{thm:row}.

\section{Construction and structure of generalized toggle groups}
\label{sec:tog_construction}
For everything that follows, let $E$ be a finite\footnote{We could take $E$ to be infinite in our definitions, but since all our results concern finite ground sets, we restrict our attention to this case.} set and $\mathcal{L}$ be any subset of the power set $2^{E}$. 
\begin{definition}
\label{def:gentoggle}
For each element $e\in E$ define its \emph{toggle} $t_e:\mathcal{L}\rightarrow\mathcal{L}$ as follows.
\[
 t_e(X) =
  \begin{cases}
   X\cup\{e\} & \text{if } e\notin X \text{ and } X\cup\{e\}\in\mathcal{L} \\
   X\setminus\{e\} & \text{if } e\in X \text{ and } X\setminus\{e\}\in\mathcal{L} \\
   X       & \text{otherwise.}
  \end{cases}
\]
We call $\left\{t_e \ | \ e\in E\right\}$ the set of \emph{toggles} of $E$.
\end{definition}

\begin{remark} \rm 
Note that $t_e^2=1$ for all $e\in E$. 
\end{remark}

We define the (generalized) toggle group as the group generated by these toggles.
\begin{definition}
\label{def:togglegroup}
Let $T(\mathcal{L})$ be the subgroup of the symmetric group $\mathfrak{S}_{\mathcal{L}}$, generated by $\left\{t_e \ | \ e\in E\right\}$. Call $T(\mathcal{L})$ the \emph{(generalized) toggle group} on $\mathcal{L}$.
\end{definition}

\begin{example} \rm
\label{ex:first}
Let $E=\left\{1,2,3,4\right\}$ and $\mathcal{L}=\{\emptyset, \{1\}, \{1,2\}, \{1,3\}, \{1,2,3\},$ $\{1,2,3,4\}\}$.

Then, for example,

\begin{center}
$t_4(\{1,2,3\})= \{1,2,3,4\}$, \hspace{.5cm} 
$t_2(\{1,2\})= \{1\}$, \hspace{.5cm}
$t_2(\{1,2,3,4\})= \{1,2,3,4\}$.
\end{center}

We can write $t_1, t_2, t_3, t_4$ as permutations in cycle notation (with 1-cycles omitted) as follows.
\begin{align*}
t_1 &=(\emptyset, \{1\}) \\
t_2 &=\left(\{1\}, \{1,2\})(\{1,3\}, \{1,2,3\}\right)\\
t_3 &=(\{1\}, \{1,3\})(\{1,2\}, \{1,2,3\})\\
t_4 &=(\{1,2,3\}, \{1,2,3,4\})
\end{align*}
\end{example}

We give the following definitions to provide language to describe when two families of subsets are ``isomorphic.''
\begin{definition}
Given $\mathcal{L}\subseteq 2^E$, let an \emph{essentialization} of $(\mathcal{L},E)$ be $(\mathcal{L}',E')$ where $E'$ and $\mathcal{L}'$ are as follows. Every $e\in E'$ must be an element $e\in E$ such that there exist $X,Y\in\mathcal{L}$ with $e\in X$ and $e\not\in Y$. That is, no element of $E'$ may be contained in all or none of the subsets in $\mathcal{L}$. Also, suppose   $Z\subseteq E, |Z|>1$ such that if $e\in Z$ and $e\in W\in\mathcal{L}$, then $Z\subseteq W$. For any maximal such $Z$, we exclude all elements of $Z$ except one from $E'$. Let $\mathcal{L}'=\{X\cap E' \ | \ X\in\mathcal{L}\}$. 
\end{definition}

\begin{example}
Let $E=\left\{1,2,3,4,5,6\right\}$ and $\mathcal{L}=\{\{1\}, \{1,2,5,6\}, \{1,3,5,6\}\}$. Then in the essentialization, we remove $1$ and $4$, since $1$ is in every subset and $4$ is in none of the subsets. Also, since $5$ and $6$ always appear together, we reduce $\{5,6\}$ to the single element $5$ and remove $6$. Therefore,  $E'=\{2,3,5\}$ and $\mathcal{L}'=\{\emptyset,\{2,5\},\{3,5\}\}$.
\end{example}

\begin{definition}
Say $\mathcal{L}_1\subseteq 2^{E_1}$ and $\mathcal{L}_2\subseteq 2^{E_2}$ are \emph{isomorphic} if there exists a bijection $\varphi:E_1'\rightarrow E_2'$ such that $\varphi(\mathcal{L}_1')=\mathcal{L}_2'$.
\end{definition}

\begin{remark} \rm
Note the structure of the toggle group depends only on the isomorphism type of the family of subsets. In particular, $T(\mathcal{L})= T(\mathcal{L}')$ for any essentialization $(\mathcal{L}',{E}')$.
\end{remark}

\smallskip
We now define the \emph{toggle poset}, which we will use in Theorem~\ref{thm:row}; see Figures~\ref{fig:togposetex} and~\ref{fig:not_sn_an} for examples.

\begin{definition}
\label{def:Lposet}
The \emph{toggle poset} $\mathcal{P}_{\mathcal{L}}$ of $\mathcal{L}$ is defined as the partial order on $\mathcal{L}$ where the covering relations are given by $X\lessdot Y$ if and only if $X\subseteq Y$ and $|Y\setminus X|=1$, that is, if there exists $e\in E$ such that $t_e(X)=Y$. 
\end{definition}

\begin{figure}[htbp]
\begin{center}
\begin{tikzpicture}[>=latex,line join=bevel,]
\node (node_5) at (39.5bp,220.5bp) [draw,draw=none] {$\left\{1, 2, 3, 4\right\}$};
  \node (node_4) at (15.5bp,114.5bp) [draw,draw=none] {$\left\{1, 2\right\}$};
  \node (node_3) at (39.5bp,8.5bp) [draw,draw=none] {$\left\{\right\}$};
  \node (node_2) at (39.5bp,61.5bp) [draw,draw=none] {$\left\{1\right\}$};
  \node (node_1) at (64.5bp,114.5bp) [draw,draw=none] {$\left\{1, 3\right\}$};
  \node (node_0) at (39.5bp,167.5bp) [draw,draw=none] {$\left\{1, 2, 3\right\}$};
  \draw [black,->] (node_3) ..controls (39.5bp,23.805bp) and (39.5bp,34.034bp)  .. (node_2);
  \draw [black,->] (node_0) ..controls (39.5bp,182.81bp) and (39.5bp,193.03bp)  .. (node_5);
  \draw [black,->] (node_2) ..controls (46.623bp,77.031bp) and (51.863bp,87.72bp)  .. (node_1);
  \draw [black,->] (node_1) ..controls (57.377bp,130.03bp) and (52.137bp,140.72bp)  .. (node_0);
  \draw [black,->] (node_4) ..controls (22.338bp,130.03bp) and (27.368bp,140.72bp)  .. (node_0);
  \draw [black,->] (node_2) ..controls (32.662bp,77.031bp) and (27.632bp,87.72bp)  .. (node_4);
\end{tikzpicture}
\end{center}
\caption{The toggle posets $\mathcal{P}_{\mathcal{L}}$, where  $\mathcal{L}$ is as in Example~\ref{ex:first}.}
\label{fig:togposetex}
\end{figure}
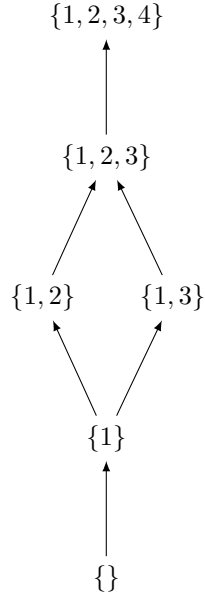

In the following proposition, we give some properties of the toggle poset. We will need the following definition.

\begin{definition}
A poset ${P}$ is \emph{graded} if there exists a rank function $\rho:{P}\rightarrow \mathbb{Z}$ such that if $y$ covers $x$ in ${P}$, $\rho(x)=\rho(y)+1$. A poset is \emph{strongly graded} if all maximal chains have the same length. 
\end{definition}

\begin{proposition}
\label{prop:togposet}
$\mathcal{P}_{\mathcal{L}}$ has the following properties:
\begin{enumerate}
\item $\mathcal{P}_{\mathcal{L}}$ is graded but not necessarily strongly graded. 
\item If the edges of the Hasse diagram of $\mathcal{P}_{\mathcal{L}}$ are labeled by the appropriate toggle, any toggle may appear at most once in any chain.
\end{enumerate}
\end{proposition}

\begin{proof}
To show part (1), note there is a rank function $\rho:\mathcal{P}_{\mathcal{L}}\rightarrow \mathbb{Z}$ given by $\rho(X)=|X|$ and covering relations only appear between elements on adjacent ranks. Thus $\mathcal{P}_{\mathcal{L}}$ is graded. 
To show $\mathcal{P}_{\mathcal{L}}$ might not be strongly graded, consider as an example $E=\{1,2,3\}$ and $\mathcal{L}=\left\{\emptyset, \{1\}, \{2\},\{1,3\}\right\}$. Both $\{\emptyset, \{2\}\}$ and $\{\emptyset, \{1\}, \{1,3\}\}$ are maximal chains, but they do not have the same length. 

For part (2), note that each cover adds a single element to the subset and removes no elements. No element may be added twice, so each Hasse diagram edge in any chain must represent a different toggle. 
\end{proof}

\begin{remark}
$\mathcal{P}_{\mathcal{L}}$ is {not always} the same as partially ordering $\mathcal{L}$ by containment, in fact, $\mathcal{P}_{\mathcal{L}}$ might not be a lattice, and the Hasse diagram of $\mathcal{P}_{\mathcal{L}}$ need not even be connected.
Consider as an example $E=\{1,2,3\}$ and $\mathcal{L}=\left\{\{1\}, \{2\}, \{1,2,3\}\right\}$. In $\mathcal{P}_{\mathcal{L}}$, $\{1\}$ and $\{1,2,3\}$ are incomparable, since there is no single toggle that maps $\{1\}$ to $\{1,2,3\}$. Also, the Hasse diagram of $\mathcal{P}_{\mathcal{L}}$ in this example is disconnected (three isolated points). Therefore, $\mathcal{P}_{\mathcal{L}}$ is not a lattice. 
\end{remark}

\begin{example} \rm
\label{ex:togposetnote}
In the case where $\mathcal{L}$ equals the set of order ideals of a finite poset $P$, the toggle poset $\mathcal{P}_{\mathcal{L}}$ is the distributive lattice of order ideals $J(P)$. Thus, in contrast to the general case  discussed above, $\mathcal{P}_{\mathcal{L}}=J(P)$ has connected Hasse diagram, is strongly graded, and is equivalent to the partial order by containment.
\end{example}

In the order ideal case $\mathcal{L}=J(P)$, P.~Cameron and D.~Fon-der-Flaass proved the following structure theorem for $T(\mathcal{L})$. 

\begin{theorem}[\protect{\cite[Theorem 4]{fonderflaass}}]
\label{thm:fonderflaass}
Let $P$ be a finite poset and $J(P)$ the set of order ideals of $P$. If $P$ is not the disjoint union of  posets, $T(J(P))$ is either the symmetric group $\mathfrak{S}_{J(P)}$ or the alternating group $A_{J(P)}$. If $P$ is the disjoint union of two posets $P=P_1+ P_2$, then $T(J(P)) = T(J(P_1))\times T(J(P_2))$.
\end{theorem}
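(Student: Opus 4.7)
The plan is to handle the two assertions separately, disposing first of the direct product case and then treating the connected case by induction on $|P|$.

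\textbf{Direct product case.} Suppose $P = P_1 + P_2$. Every ideal $I \in J(P)$ splits uniquely as $I = I_1 \sqcup I_2$ with $I_j := I \cap P_j \in J(P_j)$, giving a natural identification $J(P) \cong J(P_1) \times J(P_2)$. A toggle $t_e$ with $e \in P_j$ alters only the $j$-th coordinate, and does so exactly as the toggle $t_e$ of $T(J(P_j))$. The subgroups $T_j := \langle t_e : e \in P_j \rangle \leq T(J(P))$ therefore commute, intersect trivially, and jointly generate $T(J(P))$, yielding $T(J(P)) \cong T(J(P_1)) \times T(J(P_2))$.

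\textbf{Connected case.} Induct on $|P|$; the case $|P| = 1$ is immediate since $|J(P)| = 2$. Assume $|P| \geq 2$ with connected Hasse diagram. Choose a maximal element $e \in P$ and write $P' = P \setminus \{e\}$, $D_e = \{x \in P : x < e\}$, and $R = P' \setminus D_e$ (with the induced order). Partition $J(P) = \mathcal{A} \sqcup \mathcal{B}$ by whether the ideal contains $e$. One checks that $\mathcal{A} = J(P')$ and that $I \cup \{e\} \mapsto I \setminus D_e$ is a bijection $\mathcal{B} \to J(R)$. The toggle $t_e$ swaps $I \in \mathcal{A}$ with $I \cup \{e\} \in \mathcal{B}$ precisely when $D_e \subseteq I$, and fixes every other ideal. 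For $f \in P'$, the toggle $t_f$ restricts on $\mathcal{A}$ to the corresponding toggle of $T(J(P'))$; for $f \in R$, it restricts on $\mathcal{B}$ to the corresponding toggle of $T(J(R))$ under the identification above; and for $f \in D_e$, it fixes all of $\mathcal{B}$.

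By the inductive hypothesis (invoking the direct product case if $P'$ or $R$ is disconnected), the subgroup $\langle t_f : f \in P' \rangle$ of $T(J(P))$ acts on each of $\mathcal{A}$ and $\mathcal{B}$ as a group containing the alternating group on the respective block(s) of its natural orbit decomposition. To upgrade this into $T(J(P)) \supseteq A_{J(P)}$, I would proceed in three classical steps: (i) show that $T(J(P))$ is transitive on $J(P)$, immediate from the connectedness of the toggle graph on $J(P)$ since one can move from $\emptyset$ to any $I$ by toggling in a linear extension order; (ii) show that $T(J(P))$ is primitive on $J(P)$, using crucially that $P$ itself (not merely $P'$) is connected, so that $t_e$ cannot preserve any nontrivial block system inherited from the $\langle t_f : f \in P'\rangle$-action; (iii) produce a $3$-cycle in $T(J(P))$, which the inductive hypothesis already supplies on $\mathcal{A}$. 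Jordan's theorem --- a primitive subgroup of $\mathfrak{S}_n$ containing a $3$-cycle contains $A_n$ --- then forces $T(J(P)) \supseteq A_{J(P)}$, so $T(J(P))$ is either $\mathfrak{S}_{J(P)}$ or $A_{J(P)}$.

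\textbf{Main obstacle.} The hardest point is step (ii). When $P'$ decomposes as $P' = P_1 + \cdots + P_k$, the $\langle t_f : f \in P'\rangle$-action on $\mathcal{A}$ is naturally imprimitive, with blocks coming from fixing coordinates in the $J(P_1) \times \cdots \times J(P_k)$ decomposition. Ruling out every such candidate block system after adjoining $t_e$ requires the full force of the hypothesis that $P$ is connected: this forces $e$ to lie above elements in several components of $P'$, so that the swaps enacted by $t_e$ mix the factors rather than respecting any coordinate projection.
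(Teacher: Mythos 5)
Your direct-product case is fine and matches the paper's (which routes it through Lemma~\ref{lem:oicommute} and Theorem~\ref{thm:times}). The connected case, however, has a genuine gap at exactly the point you flag: step (ii), primitivity of $T(J(P))$ on $J(P)$, is the hard content of the theorem and you do not prove it --- you only describe what a proof would need to rule out. The paper (following Cameron--Fon-der-Flaass, packaged here as Theorem~\ref{thm:toggpstructure}) never establishes primitivity and never invokes the primitive-plus-$3$-cycle form of Jordan's theorem. Instead it chooses $e$ maximal or minimal so that $P\setminus e$ is \emph{still connected}, shows that the pointwise stabilizer $K$ of the ideals on one side of the partition induces at least the full alternating group on the other side (using simplicity of $A_{\mathcal{L}_e}$ together with the cardinality comparison $|\mathcal{L}_e|>|\mathcal{L}_{\overline{e}}|$, with small cases $|P|\le 4$ checked by computer because $A_4$ is not simple), and then uses the lemma that $K$ and its conjugate $K^{t_e}$ --- two alternating groups supported on overlapping subsets whose union is all of $\mathcal{L}$ --- generate $A_{\mathcal{L}}$ or $\mathfrak{S}_{\mathcal{L}}$. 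That overlapping-supports argument replaces both your primitivity step and your appeal to Jordan's theorem.

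Your step (iii) is also flawed as stated. The inductive hypothesis gives a $3$-cycle in the abstract group $T(J(P'))$, i.e., in the action of $\langle t_f : f\in P'\rangle$ restricted to $\mathcal{A}$; but that same group element also acts on $\mathcal{B}$, in general nontrivially, so as a permutation of all of $J(P)$ it need not be a $3$-cycle of $\mathfrak{S}_{J(P)}$. Extracting an honest $3$-cycle requires precisely the pointwise-stabilizer argument above. Worse, because you take an arbitrary maximal $e$, the poset $P'$ may be disconnected, in which case $\langle t_f : f\in P'\rangle$ restricted to $\mathcal{A}$ is a direct product $T(J(P_1))\times\cdots\times T(J(P_k))$ acting on a product set; a $3$-cycle in one factor acts on $J(P')$ as a product of $\prod_{i\ge 2}|J(P_i)|$ disjoint $3$-cycles, so this subgroup typically contains no $3$-cycle at all. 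Both defects are repairable --- choose $e$ so that $P\setminus e$ stays connected, and work with the pointwise stabilizer of one block --- but at that point you have reconstructed the paper's argument rather than a Jordan-theorem alternative.
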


Initial computations (including all toggle groups with $|E|\leq 3$ 
and many other larger examples) indicated that all generalized toggle groups might be a direct product of symmetric and/or alternating groups as in the case of order ideals; however,
this is not true, as shown by the following example. 

\begin{example} \rm
\label{ex:not_sn_an}
Let $E=\{1,2,3,4\}$ and $\mathcal{L}=\{\emptyset,\{1\},\{1,2\},\{1,2,3\},\{1,2,3,4\},\{2,3,4\},\{3,4\},\{4\}\}$. 
We have computed using \verb|SageMath|~\cite{sage} that the toggle group $T(\mathcal{L})$ as a subgroup of $\mathfrak{S}_{\mathcal{L}}\cong \mathfrak{S}_8$ (via the isomorphism in which the $i$th element of $\mathcal{L}$ is mapped to $i$) is generated by the following four permutations: 
\[\left\langle ~ (1~2) ( 5~6), (2~3) (6~7), (3~4)(7~8) , (1~8) ( 4~5) ~\right\rangle.\]
 We used \verb|GAP|~\cite{gap} via \verb|SageMath|~\cite{sage} to compute that $T(\mathcal{L})=((((C_2\times D_4)\rtimes C_2)\rtimes C_3)\rtimes C_2)$ where $C_n$ is the cyclic group of order $n$ and $D_4$ is the dihedral group of order $8$. See Figure~\ref{fig:not_sn_an} for the toggle poset of this example and Appendix~\ref{sec:appendix} for the \verb|SageMath| code to run these computations.
\end{example}

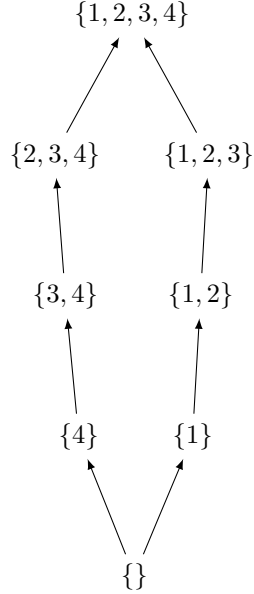
\begin{figure}[htbp]
\begin{center}
\begin{tikzpicture}[>=latex,line join=bevel,]
\node (node_7) at (49.0bp,220.5bp) [draw,draw=none] {$\left\{1, 2, 3, 4\right\}$};
  \node (node_6) at (75.0bp,114.5bp) [draw,draw=none] {$\left\{1, 2\right\}$};
  \node (node_5) at (50.0bp,8.5bp) [draw,draw=none] {$\left\{\right\}$};
  \node (node_4) at (20.0bp,167.5bp) [draw,draw=none] {$\left\{2, 3, 4\right\}$};
  \node (node_3) at (24.0bp,114.5bp) [draw,draw=none] {$\left\{3, 4\right\}$};
  \node (node_2) at (29.0bp,61.5bp) [draw,draw=none] {$\left\{4\right\}$};
  \node (node_1) at (72.0bp,61.5bp) [draw,draw=none] {$\left\{1\right\}$};
  \node (node_0) at (78.0bp,167.5bp) [draw,draw=none] {$\left\{1, 2, 3\right\}$};
  \draw [black,->] (node_2) ..controls (27.598bp,76.805bp) and (26.595bp,87.034bp)  .. (node_3);
  \draw [black,->] (node_6) ..controls (75.841bp,129.81bp) and (76.443bp,140.03bp)  .. (node_0);
  \draw [black,->] (node_4) ..controls (28.306bp,183.11bp) and (34.472bp,193.95bp)  .. (node_7);
  \draw [black,->] (node_5) ..controls (44.048bp,23.956bp) and (39.71bp,34.49bp)  .. (node_2);
  \draw [black,->] (node_5) ..controls (56.268bp,24.031bp) and (60.879bp,34.72bp)  .. (node_1);
  \draw [black,->] (node_0) ..controls (69.694bp,183.11bp) and (63.528bp,193.95bp)  .. (node_7);
  \draw [black,->] (node_3) ..controls (22.878bp,129.81bp) and (22.076bp,140.03bp)  .. (node_4);
  \draw [black,->] (node_1) ..controls (72.841bp,76.805bp) and (73.443bp,87.034bp)  .. (node_6);
\end{tikzpicture}
\end{center}
\caption{The toggle poset $\mathcal{P}_{\mathcal{L}}$, where $\mathcal{L}$ is as in Example~\ref{ex:not_sn_an}.}
\label{fig:not_sn_an}
\end{figure}

Even though the above example shows that generalized toggle group structure is not well-behaved in general, we show in this paper that similar structure theorems to Theorem~\ref{thm:fonderflaass} do exist in many other {combinatorially significant} generalized toggle groups. Our first main results, Theorems~\ref{thm:times} and~\ref{thm:toggpstructure}, give sufficient conditions for $T(\mathcal{L})$ to have a structure theorem similar to Theorem~\ref{thm:fonderflaass}. We then summarize in Corollary~\ref{cor:mastercor} (and prove in Section~\ref{sec:ex}) that the following cases satisfy these conditions: chains, antichains, and interval-closed sets of a poset; independent sets and vertex covers of a graph.

\smallskip
We begin with Theorem~\ref{thm:times} discussing cases in which the toggle group reduces to a direct product. We will need the following definition.
\begin{definition}
\label{def:togdisjoint}
$\mathcal{L}$ is a \emph{toggle-disjoint direct sum} of $\mathcal{L}_1$ and $\mathcal{L}_2$, written $\mathcal{L}= \mathcal{L}_1\oplus\mathcal{L}_2$, if $\mathcal{L}=\mathcal{L}_1\cup \mathcal{L}_2$  and their essentializations $(\mathcal{L}_1',E_1')$ and $(\mathcal{L}_2',E_2')$ satisfy $E_1'\cap E_2'=\emptyset$.
$\mathcal{L}$ is a \emph{toggle-disjoint Cartesian product} of $\mathcal{L}_1$ and $\mathcal{L}_2$, written $\mathcal{L}= \mathcal{L}_1\otimes\mathcal{L}_2$, if $\mathcal{L}=\{X_1\cup X_2 \ | \ X_1\in\mathcal{L}_1, X_2\in\mathcal{L}_2\}$  and their essentializations $(\mathcal{L}_1',E_1')$ and $(\mathcal{L}_2',E_2')$ satisfy $E_1'\cap E_2'=\emptyset$.
\end{definition}

\begin{remark} \rm
$\mathcal{L}= \mathcal{L}_1\oplus\mathcal{L}_2$ implies $\mathcal{P}_{\mathcal{L}}= \mathcal{P}_{\mathcal{L}_1} + \mathcal{P}_{\mathcal{L}_2}$, where $+$ denotes the disjoint union of posets. $\mathcal{L}= \mathcal{L}_1\otimes\mathcal{L}_2$ implies $\mathcal{P}_{\mathcal{L}}= \mathcal{P}_{\mathcal{L}_1} \times \mathcal{P}_{\mathcal{L}_2}$ where $\times$ denotes the Cartesian product of posets.
\end{remark}

\begin{theorem}
\label{thm:times}
If $\mathcal{L}= \mathcal{L}_1\oplus\mathcal{L}_2$ or $\mathcal{L}= \mathcal{L}_1\otimes\mathcal{L}_2$, then $T(\mathcal{L}) = T(\mathcal{L}_1)\times T(\mathcal{L}_2)$.
\end{theorem}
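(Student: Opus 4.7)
The plan is to reduce both cases to the observation that toggles with generator in $E_i'$ act only on the ``$\mathcal{L}_i$-coordinate'' of elements of $\mathcal{L}$, so the two subgroups generated by $\{t_e : e \in E_1'\}$ and $\{t_e : e \in E_2'\}$ commute, have trivial intersection, and together generate $T(\mathcal{L})$. By the remark that $T(\mathcal{L}) = T(\mathcal{L}')$, I will first replace $\mathcal{L}_1, \mathcal{L}_2$ by their essentializations so that I may assume outright that $\mathcal{L}_i \subseteq 2^{E_i}$ with $E_1 \cap E_2 = \emptyset$; toggles for $e \in E \setminus (E_1 \cup E_2)$ are trivial, so they play no role.

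For the direct sum case $\mathcal{L} = \mathcal{L}_1 \oplus \mathcal{L}_2$, every $X \in \mathcal{L}$ lies entirely in $E_1$ or entirely in $E_2$. I will verify that for $e \in E_1$, the toggle $t_e$ acts as the $\mathcal{L}_1$-toggle on elements of $\mathcal{L}_1$ and as the identity on elements of $\mathcal{L}_2$: if $X \in \mathcal{L}_2$ then $X \cup \{e\}$ contains elements from both $E_1$ and $E_2$ and hence lies in neither $\mathcal{L}_1$ nor $\mathcal{L}_2$, so $t_e(X) = X$. Thus $\langle t_e : e \in E_1 \rangle \cong T(\mathcal{L}_1)$ acts faithfully on $\mathcal{L}_1$ and trivially on $\mathcal{L}_2$, and symmetrically for $E_2$. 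Since these two subgroups act on disjoint subsets of $\mathcal{L}$, they commute and intersect trivially, giving $T(\mathcal{L}) = T(\mathcal{L}_1) \times T(\mathcal{L}_2)$.

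For the Cartesian product case $\mathcal{L} = \mathcal{L}_1 \otimes \mathcal{L}_2$, I use the natural bijection $\mathcal{L} \leftrightarrow \mathcal{L}_1 \times \mathcal{L}_2$ sending $X_1 \cup X_2$ to $(X_1, X_2)$. The key computation, which I would do carefully in both the ``add'' and ``remove'' cases, is that for $e \in E_1$ and any $X_1 \cup X_2 \in \mathcal{L}$,
\[
t_e(X_1 \cup X_2) \;=\; t_e^{(1)}(X_1) \,\cup\, X_2,
\]
where $t_e^{(1)}$ is the toggle computed in $\mathcal{L}_1$; this follows because $(X_1 \cup X_2) \cup \{e\} = (X_1 \cup \{e\}) \cup X_2$ belongs to $\mathcal{L}_1 \otimes \mathcal{L}_2$ if and only if $X_1 \cup \{e\} \in \mathcal{L}_1$, and analogously for the removal case. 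Hence, under the bijection with $\mathcal{L}_1 \times \mathcal{L}_2$, toggles in $E_1$ correspond to pairs $(\sigma, \mathrm{id})$ with $\sigma \in T(\mathcal{L}_1)$, and toggles in $E_2$ to pairs $(\mathrm{id}, \tau)$ with $\tau \in T(\mathcal{L}_2)$. These two sets of pairs generate $T(\mathcal{L}_1) \times \{\mathrm{id}\}$ and $\{\mathrm{id}\} \times T(\mathcal{L}_2)$ respectively, which commute and intersect trivially, and together generate the full $T(\mathcal{L}_1) \times T(\mathcal{L}_2)$.

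The only real obstacle is the bookkeeping around essentialization: one must confirm that after passing to essentializations, the hypothesis ``$\mathcal{L} = \mathcal{L}_1 \oplus \mathcal{L}_2$'' or ``$\mathcal{L} = \mathcal{L}_1 \otimes \mathcal{L}_2$'' still determines $\mathcal{L}$ correctly and that no nontrivial toggles are lost. Once that is in hand, both cases reduce to the same short argument about commuting subgroups acting on disjoint factors, and the structural statement $T(\mathcal{L}) = T(\mathcal{L}_1) \times T(\mathcal{L}_2)$ follows immediately.
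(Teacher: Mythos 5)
Your proposal is correct and follows essentially the same route as the paper's proof: pass to essentializations with disjoint ground sets, observe that toggles from $E_1'$ and $E_2'$ commute because each acts only on its own factor (trivially in the direct-sum case, coordinatewise in the Cartesian-product case), and conclude the direct product decomposition. You spell out the verification that $t_e(X_1\cup X_2)=t_e^{(1)}(X_1)\cup X_2$ and the essentialization bookkeeping in somewhat more detail than the paper does, but the argument is the same.
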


\begin{proof} 
Suppose ${\mathcal{L}}$ is the toggle-disjoint direct sum $\mathcal{L}= \mathcal{L}_1\oplus\mathcal{L}_2$. So then the essentialized ground sets $E_1',E_2'\subset E$ satisfy $E_1'\cap E_2'=\emptyset$. 
Then $t_{e_1}t_{e_2}=t_{e_2}t_{e_1}$ for all $e_1\in E_1'$ and $e_2\in E_2'$, since toggles in $E_1'$ act trivially on all elements of $\mathcal{L}_2'$ and vice versa. Thus $T(\mathcal{L})= T(\mathcal{L}_1')\times T(\mathcal{L}_2')= T(\mathcal{L}_1)\times T(\mathcal{L}_2)$.

Suppose ${\mathcal{L}}$ is the toggle-disjoint Cartesian product $\mathcal{L}= \mathcal{L}_1\otimes\mathcal{L}_2$. Again,  the essentialized ground sets $E_1',E_2'\subset E$ satisfy $E_1'\cap E_2'=\emptyset$.  We may represent the elements of ${\mathcal{L}}$ as ordered pairs $(X_1',X_2')$ where $X_1'\in {\mathcal{L}_1'}, X_1'\subseteq E_1'$ and $X_2'\in {\mathcal{L}_2'}, X_2'\subseteq E_2'$. 
The toggle groups $T(\mathcal{L}_1')$ and $T(\mathcal{L}_2')$ act on their respective components, so we have  $T(\mathcal{L})= T(\mathcal{L}_1')\times T(\mathcal{L}_2')= T(\mathcal{L}_1)\times T(\mathcal{L}_2)$.
\end{proof}

We next prove Theorem~\ref{thm:toggpstructure}, which gives conditions under which a generalized toggle group contains the alternating group. We give this a name in the definition below. 

\begin{definition}
\label{def:togaltdef}
We say $\mathcal{L}$ is \emph{toggle-alternating} if $T(\mathcal{L})$ contains the alternating group ${A}_{\mathcal{L}}$.
\end{definition}

We will use the following additional definitions. 

\begin{definition}
Given $e\in E$ and $\mathcal{L}\subseteq 2^E$, define $\mathcal{L}_e:=\{X\in\mathcal{L} \ | \ e\in X\}$ to consist of all the subsets in $\mathcal{L}$ that contain $e$ and $\mathcal{L}_{\thickbar{e}}:=\{X\in\mathcal{L} \ | \ e\notin X\}$  to be all the subsets in $\mathcal{L}$ that do not contain $e$. For any subset $S\subseteq \mathcal{L}$, let $t_e(S)=\left\{t_e(X) \mid X\in S\right\}$. 
\end{definition}

\begin{definition}
\label{def:pleasant}
We define the collection of \emph{inductively toggle-alternating} sets of subsets to be the smallest collection of $\mathcal{L}\subseteq 2^E$, with essentializations $(\mathcal{L}',E')$, such that:
\begin{enumerate}
\item 
if $|E'|\leq 4$, then $T(\mathcal{L})$ is toggle-alternating;
\item if $|E'|\geq 5$, 
then there exists $e\in E'$ such that at least one of the following holds:
\begin{enumerate}
\item
$\mathcal{L}_e$ is inductively toggle-alternating, $\mathcal{L}_e\cup t_e(\mathcal{L}_e) = \mathcal{L}$, and $\mathcal{L}_e\cap t_e(\mathcal{L}_e) \neq \emptyset$, or
\item $\mathcal{L}_{\thickbar{e}}$ is inductively toggle-alternating, $\mathcal{L}_{\thickbar{e}}\cup t_e(\mathcal{L}_{\thickbar{e}}) = \mathcal{L}$, and $\mathcal{L}_{\thickbar{e}}\cap t_e(\mathcal{L}_{\thickbar{e}}) \neq \emptyset$.
\end{enumerate}
\end{enumerate}
\end{definition}

\begin{theorem}
\label{thm:toggpstructure}
If $\mathcal{L}$ is inductively toggle-alternating, then it is toggle-alternating.
\end{theorem}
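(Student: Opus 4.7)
The plan is induction on $|E'|$, mirroring the order-ideal argument of \cite{fonderflaass} (Theorem~\ref{thm:fonderflaass}). The base case $|E'|\le 4$ is immediate from Definition~\ref{def:pleasant}(1). For the inductive step, assume $|E'|\ge 5$ and fix $e\in E'$ satisfying one of the two clauses of Definition~\ref{def:pleasant}(2). The complementation $X\mapsto E\setminus X$ yields an isomorphism of set systems that intertwines each toggle with itself and swaps the roles of $\mathcal{L}_e$ and $\mathcal{L}_{\bar e}$; hence clauses (a) and (b) are interchanged by this symmetry, so it suffices to treat clause (a).

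Assuming clause (a), $\mathcal{L}_e$ is inductively toggle-alternating. Because $e$ lies in every element of $\mathcal{L}_e$, its essentialization has at most $|E'|-1$ ground elements, so the inductive hypothesis gives $A_{\mathcal{L}_e}\subseteq T(\mathcal{L}_e)$. Next, for any $f\ne e$, the toggle $t_f$ preserves both $\mathcal{L}_e$ and $\mathcal{L}_{\bar e}$ setwise (toggling an element other than $e$ does not alter membership of $e$), and its restriction to $\mathcal{L}_e$ agrees with the $f$-toggle of the family $\mathcal{L}_e$. Consequently the subgroup $H=\langle t_f : f\ne e\rangle\le T(\mathcal{L})$ surjects onto $T(\mathcal{L}_e)$ by restriction, so the image of $H$ in $\mathfrak{S}_{\mathcal{L}_e}$ contains $A_{\mathcal{L}_e}$. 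The remaining two hypotheses of clause~(a) become structural statements about the involution $t_e$: $\mathcal{L}_e\cup t_e(\mathcal{L}_e)=\mathcal{L}$ says $Y\cup\{e\}\in\mathcal{L}$ for every $Y\in\mathcal{L}_{\bar e}$, so $t_e$ matches $\mathcal{L}_{\bar e}$ bijectively with a subset $\mathcal{L}_e^{\ast}\subseteq\mathcal{L}_e$; and $\mathcal{L}_e\cap t_e(\mathcal{L}_e)\ne\emptyset$ exhibits a fixed point of $t_e$ in $\mathcal{L}_e\setminus\mathcal{L}_e^{\ast}$.

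The crux is then to combine the alternating action on $\mathcal{L}_e$ (realized inside $H$) with $t_e$ to produce a genuine $3$-cycle in $T(\mathcal{L})$, after which classical permutation-group theory finishes. My plan is to lift a $3$-cycle $\sigma\in A_{\mathcal{L}_e}$ to $\widetilde\sigma\in H$ with $\widetilde\sigma|_{\mathcal{L}_e}=\sigma$ and $\widetilde\sigma|_{\mathcal{L}_{\bar e}}=\pi$, and then to eliminate the unwanted tail $\pi$ by taking suitable powers and commutators of $\widetilde\sigma$ with other lifts, leveraging simplicity of $A_{\mathcal{L}_e}$ (which must be justified separately, using $|E'|\ge 5$ to show $|\mathcal{L}_e|\ge 5$). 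This should produce an element of $T(\mathcal{L})$ acting as a $3$-cycle on $\mathcal{L}_e$ and trivially on $\mathcal{L}_{\bar e}$, i.e., a genuine $3$-cycle on $\mathcal{L}$. Transitivity of $T(\mathcal{L})$ on $\mathcal{L}$ then follows from inductive transitivity on $\mathcal{L}_e$ together with the $t_e$-bridge to $\mathcal{L}_{\bar e}$, and primitivity follows from the asymmetry created by the fixed point of $t_e$ in $\mathcal{L}_e$. Jordan's theorem (a primitive permutation group containing a $3$-cycle contains the alternating group) then yields $T(\mathcal{L})\supseteq A_{\mathcal{L}}$.

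The principal obstacle is precisely this lifting step: an arbitrary element of $H$ restricting to a specified $3$-cycle on $\mathcal{L}_e$ may act in an uncontrolled manner on $\mathcal{L}_{\bar e}$, and one must argue, via a Goursat-lemma-style analysis of the subdirect embedding $H\hookrightarrow\mathfrak{S}_{\mathcal{L}_e}\times\mathfrak{S}_{\mathcal{L}_{\bar e}}$, that the tail on $\mathcal{L}_{\bar e}$ can always be killed. Once this is handled, the Jordan-theorem conclusion parallels the final step in the proof of Theorem~\ref{thm:fonderflaass}.
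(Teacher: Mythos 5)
Your skeleton matches the paper's: induction on $|E'|$, base case from Definition~\ref{def:pleasant}(1), and in the inductive step a decomposition of $\mathcal{L}$ into $\mathcal{L}_e$ and $\mathcal{L}_{\thickbar{e}}$ with the conjugate pair of pointwise stabilizers doing the final work. But the step you yourself flag as the ``principal obstacle'' --- killing the tail on $\mathcal{L}_{\thickbar{e}}$ --- is exactly the heart of the proof, and you leave it open. The missing ingredient is a cardinality comparison that you never extract from the hypotheses: the two conditions in clause (2a), namely $\mathcal{L}_e\cup t_e(\mathcal{L}_e)=\mathcal{L}$ and $\mathcal{L}_e\cap t_e(\mathcal{L}_e)\neq\emptyset$, together force $|\mathcal{L}_e|>|\mathcal{L}_{\thickbar{e}}|$ \emph{strictly} (every element of $\mathcal{L}_{\thickbar{e}}$ is $t_e$ of something in $\mathcal{L}_e$, and at least one element of $\mathcal{L}_e$ is fixed by $t_e$). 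This is what resolves your Goursat-style worry: the subgroup $K$ of elements fixing $\mathcal{L}_{\thickbar{e}}$ pointwise contains the kernel of the restriction $H\to\mathfrak{S}_{\mathcal{L}_{\thickbar{e}}}$, which is normal in $H$, so its image in $\mathfrak{S}_{\mathcal{L}_e}$ is a normal subgroup of a group containing the simple group $A_{\mathcal{L}_e}$; if that image were trivial then $|\pi_1(H)|\leq|\pi_2(H)|\leq|\mathcal{L}_{\thickbar{e}}|!<|\mathcal{L}_e|!/2$, a contradiction. Without the strict inequality, a diagonal subgroup of $A_{\mathcal{L}_e}\times A_{\mathcal{L}_{\thickbar{e}}}$ would be a genuine obstruction, so your plan as stated cannot be completed by ``suitable powers and commutators'' alone.

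Two further points. First, your parenthetical claim that $|E'|\geq 5$ can be used to show $|\mathcal{L}_e|\geq 5$ is unjustified and need not hold; the paper treats $|\mathcal{L}_e|\leq 4$ as a separate case, falling back on condition (1) of Definition~\ref{def:pleasant} applied to the essentialization of $\mathcal{L}_e$. Second, your endgame (transitivity, primitivity, a $3$-cycle, then Jordan's theorem) is more roundabout than necessary and the primitivity claim is unsubstantiated; once $K$ induces at least $A_{\mathcal{L}_e}$ on $\mathcal{L}_e$ while fixing $\mathcal{L}_{\thickbar{e}}$ pointwise, the conjugate $K^{t_e}$ does the same on $t_e(\mathcal{L}_e)$, and the standard lemma on alternating groups with overlapping, covering supports immediately gives a group containing $A_{\mathcal{L}}$ --- this is the one-line finish the paper uses. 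Your complementation trick for reducing (2b) to (2a) is fine and slightly slicker than the paper's ``argue symmetrically,'' but it does not compensate for the unproved central step.
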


\begin{proof}
Let $\mathcal{F}$ be the collection of inductively toggle-alternating sets of subsets of $E$.
Suppose $\mathcal{L}\in\mathcal{F}$ and recall $(\mathcal{L}',E')$ denotes the essentialization of $(\mathcal{L},E)$.
The proof follows the proof of Cameron and Fon-der-Flaass~\cite{fonderflaass} for the order ideal toggle group and is by induction on the size of $E'$. 

Note for $|E'|\leq 4$, we have required in the definition of inductively toggle-alternating that $T(\mathcal{L}')$ be toggle-alternating. This is because in the proof below we need $A_{\mathcal{L}}$ to be simple, and recall the alternating group ${A}_n$ is simple for $n\geq 5$ but not for $n=4$. Thus $|E'|\leq 4$ form the base cases for our induction.

Now suppose $|E'|\geq 5$. Also, suppose for all $\mathcal{L}_0\in\mathcal{F}$ with $|E_0'|<|E'|$ we have $\mathcal{L}_0$ is toggle-alternating.
Let $e\in E'$ be the element required by condition $(2)$ of Definition~\ref{def:pleasant}, since $\mathcal{L}$ is inductively toggle-alternating.
So we know $\mathcal{L}$ is the disjoint union of $\mathcal{L}_e$ and $\mathcal{L}_{\thickbar{e}}$, and neither $\mathcal{L}_e$ nor $\mathcal{L}_{\thickbar{e}}$ are empty. Suppose $\mathcal{L}$ satisfies condition $(2\mathrm{a})$ of Definition~\ref{def:pleasant}, so that $\mathcal{L}_e$ is inductively toggle-alternating and $|\mathcal{L}_e|\geq|\mathcal{L}_{\thickbar{e}}|$ (if not, then $\mathcal{L}$ satisfies condition $(2\mathrm{b})$, so $|\mathcal{L}_e|\leq|\mathcal{L}_{\thickbar{e}}|$, and the argument follows symmetrically with $\mathcal{L}_{\thickbar{e}}$ instead). 
Now since all the elements of $\mathcal{L}_e$ contain $e$,
$t_e$ will act as the identity 
in $T(\mathcal{L}_e)$.
That is, the essentialization $\mathcal{L}'_e$ has ground set at most $E'\setminus \{e\}$. 

By induction, $T(\mathcal{L}_e)$ contains the alternating group $A_{\mathcal{L}_e}$. 
Assume $|\mathcal{L}_e|\geq 5$ so that $A_{\mathcal{L}_e}$ is simple. Since $A_{\mathcal{L}_e}$ 
is simple and $|\mathcal{L}_e|>|\mathcal{L}_{\thickbar{e}}|$, 
the subgroup $K$ of $T(\mathcal{L})$ fixing $\mathcal{L}_{\thickbar{e}}$ pointwise induces at least $A_{\mathcal{L}_e}$ on $\mathcal{L}_e$. In the case $|\mathcal{L}_e|\leq 4$, 
the essentialized ground set of $\mathcal{L}_e$, which we denote $E_e'$, satisfies 
$|E_{e}'|\leq 4$ and condition $(1)$ in Definition~\ref{def:pleasant} gives that the subgroup $K$ of $T(\mathcal{L})$ fixing $\mathcal{L}_{\thickbar{e}}$ pointwise induces at least $A_{\mathcal{L}_e}$ on $\mathcal{L}_e$. Then in either case,
$\mathcal{L}_e\cup t_e(\mathcal{L}_e) = \mathcal{L}$ 
and $\mathcal{L}_e\cap t_e(\mathcal{L}_e) \neq \emptyset$,
so $K$ and the conjugate subgroup $K^{t_e}$ ($:=\left\{t_e k t_e \ | \ k\in K\right\}$) generate an alternating or symmetric group on~$\mathcal{L}$. 
\end{proof}

In the following corollary, we apply Theorems~\ref{thm:times} and~\ref{thm:toggpstructure} to 
obtain a toggle group structure description for various combinatorially interesting sets of subsets.
We prove this corollary in each case in Section~\ref{sec:ex} by the following method. We first discuss cases in which ${\mathcal{L}}$ is a toggle-disjoint direct sum or Cartesian product and use Theorem~\ref{thm:times} to conclude $T(\mathcal{L})$ is a direct product. 
Secondly, we show the remaining family of subsets is inductively toggle-alternating by exhibiting an $e\in E$ satisfying condition (2) in Definition~\ref{def:pleasant} and proving the base cases $|E|\leq 4$ (condition $(1)$ in Definition~\ref{def:pleasant}). See Appendix~\ref{sec:appendix} for the \verb|SageMath| code used to check these base cases.

\begin{corollary}
\label{cor:mastercor}
Toggle groups of the following sets have a group structure given by Theorems~\ref{thm:times} and~\ref{thm:toggpstructure}:
\begin{itemize}
\item Order ideals of a poset (proven in~\cite{fonderflaass}, see Section~\ref{sec:orderideals}),
\item Chains of a poset (see Section~\ref{sec:chains}),
\item Antichains of a poset (see Section~\ref{sec:antichains}),
\item Interval-closed sets of a certain family of posets (see Section~\ref{sec:ic}),
\item Independent sets of a graph (see Section~\ref{sec:indepsets}), and
\item Vertex covers of a graph (see Section~\ref{sec:vertexcovers}).
\end{itemize}
\end{corollary}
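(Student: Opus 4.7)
The plan is to handle each family on the list via a uniform two-step schema suggested by the statement itself. For each family $\mathcal{L}$ associated with an underlying combinatorial object $O$ (poset or graph), I would first identify a natural notion of ``disconnection'' of $O$ under which $\mathcal{L}$ splits as a toggle-disjoint direct sum or a toggle-disjoint Cartesian product of the families on the pieces, so that Theorem~\ref{thm:times} reduces the problem to the indecomposable case. I would then show, for each indecomposable $O$, that $\mathcal{L}$ is inductively toggle-alternating by producing an explicit witness $e \in E'$ satisfying condition~$(2)$ of Definition~\ref{def:pleasant}; Theorem~\ref{thm:toggpstructure} then yields $A_{\mathcal{L}} \leq T(\mathcal{L})$. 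The base cases $|E'| \leq 4$ required by condition~$(1)$ form a finite list and are to be verified by enumeration in \verb|SageMath|, as indicated in the paragraph preceding the statement.

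Concretely, I would match the decomposition type to the family: for chains of a poset, a disjoint union of posets induces a toggle-disjoint direct sum (since every chain lies in a single connected component), while for antichains, interval-closed sets, independent sets, and vertex covers, a disjoint union of the underlying object induces a toggle-disjoint Cartesian product (because such a subset picks one valid subset per component). For the indecomposable cases, the witness $e \in E'$ should be tailored to the family: for chains, a minimal or maximal element works whenever one is universally comparable, since adjoining or removing it preserves the chain property for most subsets; for antichains, an isolated extreme element (in the Hasse diagram) plays the analogous role; for independent sets one picks a vertex with a controlled neighborhood (for example, a leaf or a vertex whose closed neighborhood covers enough of the graph); and for vertex covers one dualizes the independent-set argument. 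Interval-closed sets are the most delicate, which is why the statement restricts to a certain family of posets; the plan is to isolate exactly the hypothesis on the poset that guarantees the existence of a workable $e$.

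The main technical obstacle will be verifying, for the chosen $e$, the two auxiliary conditions $\mathcal{L}_e \cup t_e(\mathcal{L}_e) = \mathcal{L}$ (or the $\thickbar{e}$ version) and $\mathcal{L}_e \cap t_e(\mathcal{L}_e) \neq \emptyset$. The first is a genuine covering condition: every $X \in \mathcal{L}$ must either already contain $e$ or remain in $\mathcal{L}$ after adjoining~$e$. This is precisely the structural requirement that dictates the choice of~$e$ in each family (a universally comparable element for chains, a leaf for independent sets, and so on), and it is where the hypothesis on the underlying object is really used. The second condition rules out the degenerate possibility that $t_e$ acts as a fixed-point-free involution pairing $\mathcal{L}_e$ with $\mathcal{L}_{\thickbar{e}}$; in each case I would exhibit a single concrete $X$ with $e \in X$ and $X \setminus \{e\} \in \mathcal{L}$ to defeat it.

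Finally, I would assemble the corollary by combining the direct-product reduction with the inductive base of Theorem~\ref{thm:toggpstructure}, noting that each indecomposable toggle group is either $\mathfrak{S}_{\mathcal{L}}$ or $A_{\mathcal{L}}$ and that determining which one requires the parity analysis discussed in the introduction (an odd toggle, when present, forces $\mathfrak{S}_{\mathcal{L}}$). The family-by-family verification will occupy the bulk of Section~\ref{sec:ex}, but the uniform plan above reduces every case to a single structural lemma about the underlying poset or graph together with a finite \verb|SageMath| check for $|E'| \leq 4$.
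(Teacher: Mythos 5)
Your two-step schema---decompose via Theorem~\ref{thm:times}, then exhibit a witness $e$ for condition~$(2)$ of Definition~\ref{def:pleasant} and check the $|E'|\leq 4$ base cases by computer---is exactly how the paper proves this corollary, carried out family by family in Section~\ref{sec:ex}. However, your instantiation of the chain case fails on both steps. First, the relevant decomposition for $\C(P)$ is the \emph{ordinal sum}, not the disjoint union: by Lemma~\ref{lem:chaintog}, $t_p$ and $t_q$ commute if and only if $p$ and $q$ are \emph{comparable}, so for $P=P_1+P_2$ the cross-component toggles do not commute and the group does not split (for $P$ a two-element antichain, $T(\C(P))=\mathfrak{S}_3\neq\mathfrak{S}_2\times\mathfrak{S}_2$). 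Consequently the residual class after the reduction must be ``$P$ is not an ordinal sum,'' not ``$P$ is connected''---and this matters, since $\C(P)$ for a connected ordinal sum (e.g.\ a five-element chain, where $\C(P)=2^{P}$ and $T(\C(P))=(\mathfrak{S}_2)^5$, of order $32$ acting on $32$ subsets) is not toggle-alternating, so your induction would break there. Second, your proposed witness for chains, a ``universally comparable'' extreme element $e$, is precisely the choice that fails: if $e$ is comparable to everything then $X\cup\{e\}$ is a chain for every chain $X$, so $t_e$ is a fixed-point-free involution pairing $\mathcal{L}_e$ with $\mathcal{L}_{\thickbar{e}}$ and the condition $\mathcal{L}_{\thickbar{e}}\cap t_e(\mathcal{L}_{\thickbar{e}})\neq\emptyset$ is violated (moreover such an $e$ would exhibit $P$ as an ordinal sum in the first place). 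The paper instead chooses a minimal element $e$ \emph{other than} a minimal $x$ that is incomparable to some non-minimal $y$; the chain-obstruction $\{x,y\}\notin\C(P\setminus e)$ then certifies both the nonempty intersection and the ordinal indecomposability of $P\setminus e$.

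The remaining cases of your plan are consistent with the paper: a disjoint union of the underlying poset or graph gives a toggle-disjoint Cartesian product for antichains, interval-closed sets, independent sets, and vertex covers; the witnesses are a maximal or minimal element (resp.\ a vertex) whose removal keeps the object connected (a leaf suffices for independent sets, and vertex covers use condition $(2\mathrm{a})$ rather than $(2\mathrm{b})$); and interval-closed sets are restricted to the strongly-extremal-atomic-free posets precisely to guarantee a workable $e$ at every stage of the induction, with an extra $\mathfrak{S}_2$ factor split off for each extremal-atomic element. One caution on your closing paragraph: Theorem~\ref{thm:toggpstructure} only yields that $T(\mathcal{L})$ \emph{contains} $A_{\mathcal{L}}$, and the paper deliberately leaves open which of $\mathfrak{S}_{\mathcal{L}}$ or $A_{\mathcal{L}}$ occurs, noting there is no simple criterion beyond locating an odd toggle.
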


In Section~\ref{sec:ex}, we discuss these toggle groups in greater detail and prove some additional results. We also consider several other examples in addition to the above list, namely, unions of subset families (Section~\ref{sec:multipleposets}), acyclic subgraphs of a graph (Section~\ref{sec:acyclic}), connected subgraphs of a graph (Section~\ref{sec:conn}), matroids (Section~\ref{sec:matroids}), and convex geometries  (Section~\ref{sec:antimatroids}).

\section{Examples of combinatorially significant generalized toggle groups}
\label{sec:ex}
In this section, we look at many examples of toggle groups on combinatorial objects which are given as subsets of some finite ground set. 
 For many of the examples, we prove a lemma concerning when toggles commute and give a structure theorem.
The toggle group on order ideals $J(P)$ of a poset $P$, as studied in~\cite{fonderflaass,prorow}, serves as the motivating example; we list some relevant properties and theorems in Section~\ref{sec:orderideals} for the sake of comparison. 
This is neither an exhaustive list nor a complete treatment of any of these examples. Rather, we aim to identify these generalized toggle groups as avenues for further study and initiate the investigation in each case. 

Sections \ref{sec:orderideals}--\ref{sec:ic} study various poset-theoretic toggle groups. (See~\cite[Chapter 3]{Stanley_1} for background on posets.) Sections \ref{sec:indepsets}--\ref{sec:conn} examine toggle groups on graphs. Section~\ref{sec:matroids} considers matroids and Section~\ref{sec:antimatroids} discusses convex geometries.
\subsection{Order ideals}
\label{sec:orderideals}
Given a poset $P$, an \emph{order ideal} $I$ is a subset $I\subseteq P$ such that if $y\in I$ and $x\leq y$ then $x\in I$. Denote as $J(P)$ the set of order ideals of $P$. Since order ideals are subsets of the elements of $P$, we take $P$ as our ground set and $\mathcal{L}=J(P)\subseteq 2^P$.

In this subsection, we state the toggle commutation lemma and toggle group structure theorem of Cameron and Fon-der-Flaass~\cite{fonderflaass} for the order ideal toggle group $T(J(P))$. We give proofs as motivating examples for subsequent subsections. We also mention some related results on rowmotion.

\begin{lemma}[\protect{\cite[p.\ 546]{fonderflaass}}] 
\label{lem:oicommute}
In the order ideal toggle group $T(J(P))$, $(t_p t_q)^2=1$ if and only if there is no covering relation between $p$ and $q$.
\end{lemma}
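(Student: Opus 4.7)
The plan is to prove both implications separately, exploiting the fact that since each toggle is an involution, the relation $(t_pt_q)^2=1$ is equivalent to $t_pt_q=t_qt_p$.

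For the forward direction, I will argue the contrapositive: supposing without loss of generality that $p\lessdot q$ in $P$, I will exhibit an order ideal on which the two toggles fail to commute. The natural candidate is $I:=\{x\in P \mid x<q\}$. One has $p\in I$ and $q\notin I$, and the covering hypothesis $p\lessdot q$ ensures that no element lies strictly between $p$ and $q$, so in particular no element of $I$ lies strictly above $p$. A short direct check then shows $t_p(I)=I\setminus\{p\}$ and $t_q(I)=I\cup\{q\}$, whereas each reversed composition is a fixed point: $t_p$ cannot remove $p$ from $I\cup\{q\}$ because $q$, a cover of $p$, is still present, and $t_q$ cannot add $q$ to $I\setminus\{p\}$ because the predecessor $p$ is now missing. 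Hence $t_pt_q(I)=I\cup\{q\}\neq I\setminus\{p\}=t_qt_p(I)$.

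For the reverse direction, assume $p$ and $q$ are in no covering relation in either direction. The crux is the following reformulation of toggleability: $t_p$ acts nontrivially on an order ideal $I$ if and only if either $p\in I$ and no element covering $p$ from above lies in $I$, or $p\notin I$ and every element covered by $p$ from below lies in $I$. Here the order-ideal property lets us replace the downward-closed condition on all elements below $p$ by the condition on just the cocovers of $p$. In particular, the behavior of $t_p$ on $I$ is determined entirely by the membership in $I$ of the covers and cocovers of $p$ in $P$. Since $q$ is, by assumption, neither a cover nor a cocover of $p$, the toggle $t_q$, which only flips the membership of $q$, leaves these data unchanged; symmetrically, $t_p$ does not affect the behavior of $t_q$. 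As $p\neq q$, applying the two toggles in either order yields the same result on any $I$.

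The main subtlety is the \emph{covers-only} reduction in the reverse direction: naively, one might worry that a comparable but non-covering pair with $p<z<q$ would let $p$'s membership block the toggling of $q$. This is precluded by the order-ideal property, which forces $z\in I \Rightarrow p\in I$, so that the relevant constraint for toggling $q$ is the membership of cocovers such as $z$, not that of $p$ itself. Emphasizing this reduction is what unifies the comparable and incomparable cases into a single clean argument.
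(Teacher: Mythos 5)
Your proof is correct. The forward direction matches the paper's in spirit: the paper's witness is the minimal order ideal containing everything $q$ covers, while yours is the strict down-set $\{x\in P \mid x<q\}$; both contain $p$, omit $q$, admit each single toggle, and block each reversed composition for the same reasons, so the two computations are interchangeable. The reverse direction is where you genuinely reorganize the argument: the paper splits into two cases --- $p<q$ with some intermediate $r$ (observing that for every $I$ one of the two toggles acts trivially, according to whether $r\in I$) and $p,q$ incomparable (appealing to independence) --- whereas you prove a single locality statement, namely that whether $t_p$ acts nontrivially on $I$ is determined by the membership of $p$ and of its covers and cocovers, and then note that $t_q$ cannot disturb those data when $q$ is none of these elements. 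Your version buys a uniform treatment of the comparable and incomparable cases and makes explicit the order-ideal reduction from ``all elements below $p$'' to ``cocovers of $p$'' that the paper's case analysis uses only implicitly; the paper's version is shorter but leaves the reader to verify that the trivially-acting toggle remains trivially acting after the other toggle has been applied, which is exactly the point your locality lemma settles once and for all. (The degenerate case $p=q$ is trivial since $t_p^2=1$, so assuming $p\neq q$ as you do loses nothing.)
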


\begin{proof}
Suppose $q$ covers $p$. Let $I$ be the minimal order ideal containing every poset element that $q$ covers. Then $t_p t_q(I)=I\cup \{q\}$ whereas $t_q t_p(I)=I\setminus \{p\}$. So $t_p$ and $t_q$ do not commute. 

If $p<q$ but there exists $r$ such that $p<r<q$, then for any $I\in J(P)$, if $r\in I$, $t_p(I)=I$, and if $r\notin I$, then $t_q(I)=I$. So $t_p$ and $t_q$ commute.

If $p$ and $q$ are incomparable, then the presence or absence of $p$ in $I$ does not affect whether $q$ may be in $I$. So $t_p$ and $t_q$ commute.
\end{proof}

We stated as Theorem~\ref{thm:fonderflaass} a result of Cameron and Fon-der-Flaass~\cite{fonderflaass} on the order ideal toggle group structure. We give a proof below using Theorems~\ref{thm:times} and~\ref{thm:toggpstructure}, whose proofs were inspired by their original proof.

\begin{proof}[of Theorem~\ref{thm:fonderflaass}]
If $P$ is the disjoint union of two posets $P=P_1+ P_2$, then by Lemma~\ref{lem:oicommute}, ${J(P)}$ is a toggle-disjoint Cartesian product ${J(P_1)}\otimes {J(P_2)}$. So by Theorem~\ref{thm:times}, $T(J(P)) = T(J(P_1))\times T(J(P_2))$.

We show that the family $\{  J(P) \ | \ P \mbox{ is a connected poset}  \}$ is inductively toggle-alternating.
For each connected poset $P$, there will be either a maximal or minimal element $e\in P$ such that $P\setminus e$ is still connected, so this will suffice as the specified element in the definition of inductively toggle-alternating. (A maximal element will satisfy $(2\mathrm{b})$ in Definition~\ref{def:pleasant}, while a minimal element will satisfy $(2\mathrm{a})$.) Finally, we have checked, using \verb|SageMath|~\cite{sage}, that for all connected posets with $|P|\leq 4$, $T(J(P))=\mathfrak{S}_{J(P)}$ or ${A}_{J(P)}$, thus condition (1) of Definition~\ref{def:pleasant} is satisfied. (See Appendix~\ref{sec:appendix} for this code.) So this is an inductively toggle-alternating family, and the result follows by Theorem~\ref{thm:toggpstructure}.
\end{proof}

In~\cite{fonderflaass}, P.~Cameron and D.~Fon-der-Flaass showed that the following action is expressible in terms of the toggle group.

\begin{definition}
\label{def:row}
Given a finite poset $P$, let the \emph{rowmotion} of an order ideal $I\in J(P)$ be defined as the order ideal generated by the minimal elements of $P\setminus I$.
\end{definition}

\begin{theorem}[\protect{\cite[Lemma 1]{fonderflaass}}]
Given any finite poset $P$, rowmotion acting on $J(P)$ is equal to the toggle 
group element given by toggling the elements of $P$ in the reverse order of any linear extension (that is, from top to bottom).
\end{theorem}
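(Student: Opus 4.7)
The plan is to fix an ideal $I \in J(P)$ and a linear extension $q_1,q_2,\ldots,q_n$ of $P$ (so that $q_i <_P q_j$ implies $i<j$), write $\rho(I)$ for the rowmotion of $I$ (Definition~\ref{def:row}), and examine the intermediate state
\[ J_k \;:=\; (t_{q_{k+1}} \circ \cdots \circ t_{q_n})(I), \]
which is the result of running the top-to-bottom toggle composition from $q_n$ down through $q_{k+1}$. The central claim I would prove by backward induction on $k$ is the invariant
\[ J_k \;=\; \bigl(I \cap \{q_1,\ldots,q_k\}\bigr) \;\cup\; \bigl(\rho(I)\cap\{q_{k+1},\ldots,q_n\}\bigr). \]
This holds trivially for $k=n$, and specializing to $k=0$ gives $J_0 = \rho(I)$, which is exactly the statement.

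The inductive step $J_k \mapsto J_{k-1} = t_{q_k}(J_k)$ relies on the linear-extension convention in two ways: every element of $P$ covering $q_k$ is some $q_j$ with $j>k$, and every element covered by $q_k$ is some $q_{j'}$ with $j'<k$. Combined with the invariant for $J_k$, this means that the elements covering $q_k$ are present in $J_k$ exactly when they are in $\rho(I)$, while the elements covered by $q_k$ are present in $J_k$ exactly when they are in $I$. I would then branch on the four cases for whether $q_k$ lies in $I$ and in $\rho(I)$. The positive cases are: if $q_k\in\rho(I)\setminus I$, then $q_k$ must itself be a minimal element of $P\setminus I$, so every element covered by $q_k$ lies in $I$ (hence in $J_k$) and $t_{q_k}$ adds $q_k$; if $q_k\in I\cap\rho(I)$, then $q_k < r$ strictly for some minimal element $r$ of $P\setminus I$, and any element covering $q_k$ that lies weakly below $r$ is in $\rho(I)\cap J_k$, blocking removal. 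The negative cases are the contrapositives: if $q_k\notin I$ and $q_k\notin\rho(I)$, then $q_k$ fails to be minimal in $P\setminus I$, so some element covered by $q_k$ is outside $I$ and $t_{q_k}$ acts trivially; if $q_k\in I\setminus\rho(I)$, then any element covering $q_k$ being in $\rho(I)$ would force $q_k\in\rho(I)$ as well, so no cover of $q_k$ is in $J_k$ and $t_{q_k}$ removes $q_k$. In every case the resulting status of $q_k$ in $J_{k-1}$ matches precisely what the invariant at $k-1$ demands.

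The main obstacle is primarily careful bookkeeping: phrasing the invariant so that it dovetails with the two-sided definition of $t_e$ in Definition~\ref{def:gentoggle}, which adds or removes $e$ depending on the current state and only when the result remains in $J(P)$. Once the four-case table is set up, each branch is a short deduction from the rowmotion definition and the order-ideal axiom, so no deeper structural insight is required beyond the observation that a linear extension cleanly separates the elements covering $q_k$ from those covered by $q_k$ across the index $k$.
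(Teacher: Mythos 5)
Your argument is correct. Note that the paper itself gives no proof of this statement---it is quoted directly from Cameron--Fon-der-Flaass \cite[Lemma~1]{fonderflaass}---and your backward induction on the invariant $J_k = \bigl(I \cap \{q_1,\ldots,q_k\}\bigr) \cup \bigl(\rho(I)\cap\{q_{k+1},\ldots,q_n\}\bigr)$, with the linear extension separating the covers of $q_k$ (whose status in $J_k$ matches $\rho(I)$) from the elements it covers (whose status matches $I$), is essentially the standard proof of that lemma; all four cases check out.
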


Then in~\cite{prorow}, N.~Williams and the author used the group structure of $T(J(P))$ to prove the following.
\begin{theorem}[\protect{\cite[Theorem 5.2]{prorow}}]
\label{thm:prorow}
Let $P$ be a finite ranked poset. Then there is an equivariant bijection between $J(P)$ under promotion (toggling from `left to right') and $J(P)$ under rowmotion (toggling from top to bottom).
\end{theorem}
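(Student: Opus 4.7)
The plan is to exhibit rowmotion and promotion as conjugate elements of the toggle group $T(J(P))$; since conjugate elements of a permutation group induce equivariant actions on the underlying set, any such conjugator will give the desired bijection on $J(P)$.

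First I would fix notation. Because $P$ is ranked, elements of the same rank are pairwise incomparable, so by Lemma~\ref{lem:oicommute} their toggles commute; let $R_i := \prod_{\rho(p)=i} t_p$ be the well-defined product of toggles at rank $i$. The Cameron--Fon-der-Flaass description of rowmotion already recalled in the excerpt gives $\mathrm{Row} = R_r R_{r-1}\cdots R_0$. Promotion admits an analogous expression $\mathrm{Pro} = \widetilde{C}_c\cdots \widetilde{C}_1$, where the $C_j$ are the ``columns'' (or diagonals) induced by the left-to-right ordering of $P$. Each $C_j$ again consists of pairwise incomparable elements, so by Lemma~\ref{lem:oicommute} the product $\widetilde{C}_j := \prod_{p\in C_j} t_p$ is well-defined.

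Next I would construct $w\in T(J(P))$ with $w\cdot\mathrm{Row}\cdot w^{-1}=\mathrm{Pro}$ by transforming the word $R_r\cdots R_0$ into $\widetilde{C}_c\cdots\widetilde{C}_1$ through a sequence of local moves in the free group on the toggles, tracking along the way the part of the word absorbed into the conjugator. Adjacent toggles $t_p,t_q$ with $p,q$ not in a cover relation can be swapped freely by Lemma~\ref{lem:oicommute} without changing $w$. For adjacent toggles in a cover relation, one instead peels one toggle off the front (or back) of the word as a conjugating factor, using the involution $t_q^2 = 1$ to absorb it into $w$. Carrying this out as an ordered sweep---most naturally, rotating one diagonal past the next until the row decomposition has been reassembled into the column decomposition---yields simultaneously the identity $w\cdot\mathrm{Row}\cdot w^{-1}=\mathrm{Pro}$ and an explicit description of $w$ as a product of cumulative column-toggles.

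The main obstacle is controlling the rearrangement so that it terminates at exactly the promotion product and so that the accumulated conjugator is transparent enough to be useful. This is cleanest in the rectangular/rc-setting that originally motivates the theorem, where the rank and column decompositions of $P$ are dual in an ambient grid: there one can verify by induction on the number of ranks that each diagonal-rotation step produces a well-defined conjugation, and the final $w$ is the toggle-group element that successively toggles initial column segments, which is what makes the promotion--rowmotion equivariance manifest.
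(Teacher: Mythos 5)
Your high-level strategy---realize promotion and rowmotion as conjugate elements of $T(J(P))$, so that the conjugator itself furnishes the equivariant bijection---is exactly the strategy of the cited source~\cite{prorow}; this paper does not reprove Theorem~\ref{thm:prorow} but reuses the same method for Theorems~\ref{thm:chaineq} and~\ref{thm:antichaineq}. However, your writeup stops short at precisely the point where all the work lies. The engine of the actual argument is Lemma~\ref{lem:51}: once the block generators $g_1,\dots,g_k$ are known to satisfy $g_i^2=1$ and $(g_ig_j)^2=1$ for $|i-j|>1$, every ordering of $\prod_i g_{\sigma(i)}$ is conjugate to every other. You never verify these relations for your column toggles $\widetilde{C}_j$ (this requires pinning down what ``columns'' of a general ranked poset are---an rc-embedding in the sense of~\cite{prorow}---and checking that covering relations occur only between adjacent columns, so that Lemma~\ref{lem:oicommute} yields both relations), and you never invoke Lemma~\ref{lem:51}; instead you propose to rederive conjugacy via an ``ordered sweep'' of commutations and peeled-off conjugating factors whose termination you yourself flag as ``the main obstacle'' and only claim to control in the rectangular setting. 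Since the theorem is asserted for every finite ranked poset, deferring the verification to rectangles leaves the statement unproved.

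The deeper missing step is the bridge between the two groupings. Rowmotion is a product of the rank toggles $R_i$ while promotion is a product of the column toggles $\widetilde{C}_j$; these are different partitions of the same set of letters, and rowmotion is in general \emph{not equal} to any ordering of the column product. Already for $P=[2]\times[2]$, with columns $\{b\}$, $\{a,d\}$, $\{c\}$ (where $a\lessdot b,c\lessdot d$), one checks that the unique ordering of the column toggles agreeing with rowmotion on $\emptyset$ sends $\{a\}$ to $\{a,b,c,d\}$ whereas rowmotion sends $\{a\}$ to $\{a,b,c\}$. So one cannot simply apply Lemma~\ref{lem:51} to the columns; one must first prove that $\mathrm{Row}$ is \emph{conjugate} to some ordering of the column product---for instance by exhibiting a single word in the individual toggles that parses simultaneously as a row product in some order and as a column product in some order, and then applying Lemma~\ref{lem:51} once to the rows and once to the columns. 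That bridge is the substantive content of \cite[Theorem 5.2]{prorow} and of its higher-dimensional analogue, Theorem~\ref{thm:DPS}, and your proposal does not supply it.
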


This theorem resulted in many interesting corollaries when applied to specific posets; see Sections~6 through 8 of~\cite{prorow}.

In~\cite{DilksPechenikStriker}, K.~Dilks, O.~Pechenik, and the author introduced and developed the machinery of \emph{affine hyperplane toggles} and \emph{$n$-dimensional lattice projections}. Though it would take too long to define these here (see~\cite[Sections 3.4 and 3.5]{DilksPechenikStriker} for details), we highlight the following theorem that gives a large family of toggle group actions $\{ \pro_{\pi,v} \}$ whose orbit structures are equivalent to that of rowmotion. This is an extension from $2$ to $n$ dimensions of Theorem~\ref{thm:prorow}. 

\begin{theorem}[\protect{\cite[Theorem 3.25]{DilksPechenikStriker}}]
\label{thm:DPS}
Let $P$ be a finite poset with an $n$-dimensional lattice projection $\pi$. Let $v$ and $w$ be vectors in $\mathbb{R}^n$ with entries in $\{\pm 1\}$. 
Then there is an equivariant bijection
between $J(P)$ under $\pro_{\pi,v}$ and $J(P)$ under $\pro_{\pi,w}$. (Note $\pro_{\pi,(1,1,\ldots,1)}$ is rowmotion and $\pro_{\pi,(1,-1)}$ is the promotion of Theorem~\ref{thm:prorow} above.)
\end{theorem}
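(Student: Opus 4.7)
The plan is to prove the $n$-dimensional equivariance theorem by induction on the Hamming distance between $v$ and $w$, reducing the general case to a single-coordinate flip, which can be handled by an explicit conjugation inside $T(J(P))$. This follows the spirit of the $2$-dimensional proof of Theorem~\ref{thm:prorow}: find a group element $g \in T(J(P))$ such that $\pro_{\pi,w} = g \cdot \pro_{\pi,v} \cdot g^{-1}$, so $g$ itself serves as the equivariant bijection $J(P) \to J(P)$.

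For the single-step case, assume $v$ and $w$ agree except in the $i$-th coordinate. The main tool is the affine hyperplane toggle $T_h$, where $h \subset \mathbb{R}^n$ is an affine hyperplane perpendicular to the $i$-th standard basis direction. First I would verify that $T_h$ is well-defined: poset elements $p$ whose projections $\pi(p)$ lie on $h$ all share the same $i$-th projected coordinate, so no two such elements can be related by a covering relation (since a covering relation forces a strict increase in at least one coordinate of $\pi$); therefore their element-toggles pairwise commute by Lemma~\ref{lem:oicommute}, making $T_h := \prod_{p : \pi(p) \in h} t_p$ a well-defined involution in $T(J(P))$. Next, I would rewrite $\pro_{\pi,v}$ as an ordered product $T_{h_N} T_{h_{N-1}} \cdots T_{h_1}$ of the hyperplane toggles perpendicular to the $i$-th direction, listed in the order dictated by $v_i$. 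Then $\pro_{\pi,w}$ is exactly the same product read in the opposite order in that $i$-th slot, and a telescoping argument identical to the $\pro$-to-$\rho$ conjugation in \cite{prorow} produces an explicit conjugating element $g = T_{h_1} T_{h_2} \cdots T_{h_{N-1}}$ (and symmetric variants) witnessing $g \cdot \pro_{\pi,v} \cdot g^{-1} = \pro_{\pi,w}$.

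The main obstacle will be showing that conjugation by these hyperplane toggles reverses precisely the $i$-th coordinate order while leaving the orderings in the other $n-1$ coordinates untouched. Concretely, I would need a commutation lemma stating that a hyperplane toggle $T_h$ perpendicular to direction $i$ commutes with a hyperplane toggle $T_{h'}$ perpendicular to direction $j \neq i$ whenever the pair $(h, h')$ is not ``incident'' in the sense that no covering relation in $P$ projects onto both. This requires a careful case analysis of how covering relations in $P$ distribute across the grid of hyperplanes determined by $\pi$, and is essentially a generalized version of Lemma~\ref{lem:oicommute}. Once such a commutation result is in hand, the rearrangements needed to identify $g \cdot \pro_{\pi,v} \cdot g^{-1}$ with $\pro_{\pi,w}$ become bookkeeping.

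Finally, with the single-step equivariance established, I would iterate over the coordinates where $v$ and $w$ disagree: each coordinate flip supplies a conjugating element $g_i$, and the composition $g = g_k g_{k-1} \cdots g_1$ conjugates $\pro_{\pi,v}$ to $\pro_{\pi,w}$ and gives the desired equivariant bijection on $J(P)$. The specialization $w = (1,1,\ldots,1)$ then recovers rowmotion as one endpoint, confirming the parenthetical remark in the statement.
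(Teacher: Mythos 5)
First, a point of comparison: the paper you are working from does not actually prove Theorem~\ref{thm:DPS} --- it is quoted from \cite{DilksPechenikStriker} (Theorem 3.25 there), and the surrounding text explicitly declines even to define $n$-dimensional lattice projections and $\pro_{\pi,v}$. So there is no in-paper proof to measure against; I am comparing your plan to the argument in the cited reference, whose overall shape (reduce to a single coordinate flip, then conjugate using a Humphreys-type lemma such as Lemma~\ref{lem:51}) your proposal correctly anticipates.

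There is, however, a genuine gap at the foundation of your single-step argument. You define $T_h := \prod_{p:\pi(p)\in h} t_p$ for $h$ an affine hyperplane perpendicular to $e_i$, and justify well-definedness by claiming that two elements with the same $i$-th projected coordinate cannot be in a covering relation ``since a covering relation forces a strict increase in at least one coordinate of $\pi$.'' That inference fails: for a lattice projection, a covering relation $p\lessdot q$ satisfies $\pi(q)-\pi(p)=e_j$ for some single $j$, and if $j\neq i$ then $p$ and $q$ lie on the \emph{same} hyperplane $x_i=\mathrm{const}$, so by Lemma~\ref{lem:oicommute} their toggles do not commute and your $T_h$ is not a well-defined (order-independent) involution. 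The hyperplanes on which element-toggles genuinely commute are the level sets of $x\mapsto x\cdot v$ for $v\in\{\pm 1\}^n$, since every cover changes $x\cdot v$ by exactly $\pm 1$; but those are the hyperplanes used to \emph{define} $\pro_{\pi,v}$ in the first place, not the ones available for conjugating between different $v$'s. To run your conjugation you instead need, for each value $j$ of the $i$-th coordinate, a ``slab'' operator defined as a specific \emph{ordered} product of element toggles (essentially $\pro$ restricted to the subposet $\{p:\pi(p)_i=j\}$ with its induced $(n-1)$-dimensional projection), and you must separately prove that each slab operator is an involution, that slabs with non-adjacent indices commute, and --- the real crux, generalizing ``rowmotion equals the product of column toggles'' from \cite{prorow} --- that $\pro_{\pi,v}$ factors as the product of these slab operators in the order dictated by $v_i$, with the individual factors independent of $v_i$. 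Calling this step ``bookkeeping'' substantially understates it; and without repairing the well-definedness of the slab operators the telescoping argument has nothing to telescope. The outer induction over the coordinates where $v$ and $w$ differ is fine once the single-step case is genuinely established.
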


\subsection{Chains}
\label{sec:chains}
Another set with combinatorially interesting toggle group is the set of chains $\C(P)$ of a poset $P$. A \emph{chain} in a poset is a set of mutually comparable elements. In this case and several cases to follow, but unlike in the case of order ideals, any element of a chain may always be toggled `out' of the chain. That is, for any chain $C$, if $p\in C$, then $t_p(C)=C\setminus \{p\}$; since $C$ is a set of mutually comparable elements, $C\setminus \{p\}$ is as well. 

Recall from Lemma~\ref{lem:oicommute} that in the order ideal toggle group, two toggles commute if and only if there is not a covering relation between them. In contrast, we have the following toggle commutation relation for the chain toggle group.

\begin{lemma}
\label{lem:chaintog}
Let $P$ be a poset. In the chain toggle group $T(\C(P))$, $(t_p t_q)^2=1$ if and only if $p$ and $q$ are comparable in~$P$.
\end{lemma}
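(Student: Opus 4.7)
The plan is to prove both implications directly; since each toggle is an involution, $(t_p t_q)^2 = 1$ is equivalent to $t_p$ and $t_q$ commuting on every chain.

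For the ``only if'' direction, I would give an explicit counterexample: if $p \neq q$ are incomparable in $P$, then the singleton $C = \{p\}$ is a chain satisfying $t_q(C) = C$ (since $\{p, q\}$ is not a chain), so $t_p t_q(C) = t_p(\{p\}) = \emptyset$, whereas $t_q t_p(C) = t_q(\emptyset) = \{q\}$, and the two differ. (The case $p = q$ is immediate since then $t_p = t_q$.)

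For the ``if'' direction, assume without loss of generality $p \le q$; the case $p = q$ is trivial. Fix a chain $C$ and set $A = C \setminus \{p, q\}$, so $A$ is itself a chain. The key structural observation is that because $p$ and $q$ are themselves comparable, a pairwise-comparability check reduces to showing that $A \cup \{p, q\}$ is a chain if and only if both $A \cup \{p\}$ and $A \cup \{q\}$ are chains. I would then run a case analysis on which of $p, q$ lies in $C$, subdividing according to which of $A \cup \{p\}$, $A \cup \{q\}$, and $A \cup \{p, q\}$ are chains, and in each case compute $t_p t_q(C)$ and $t_q t_p(C)$ and verify they agree.

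The main obstacle is the two ``mixed'' sub-cases in which exactly one of $p, q$ lies in $C$ while $A \cup \{p, q\}$ fails to be a chain. Here the crucial argument is: any obstruction to $A \cup \{p,q\}$ being a chain must come from some $a \in A$ incomparable to $p$ or to $q$; but since $a \in C$, it is already comparable to whichever of $p, q$ sits in $C$, so the obstruction lies on the other side. Consequently, the ``add'' step for the missing element cannot fire in either order of composition, and both $t_p t_q(C)$ and $t_q t_p(C)$ reduce to simply removing the element of $\{p, q\}$ currently in $C$. The remaining sub-cases are routine, with both compositions either adding $\{p, q\}$, removing $\{p, q\}$, or acting as the identity in tandem.
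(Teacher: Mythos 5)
Your proof is correct and takes essentially the same approach as the paper: an explicit small-chain counterexample for incomparable $p,q$ (the paper uses $\emptyset$ where you use $\{p\}$), and for comparable $p,q$ the observation that the presence or absence of $p$ in a chain never affects whether $q$ can be toggled — your case analysis simply spells out in detail what the paper states in one line.
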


\begin{proof}
If $p$ and $q$ are incomparable in $P$, then $t_p t_q(\emptyset) = \{q\}$ whereas $t_q t_p(\emptyset) = \{p\}$. So $t_p$ and $t_q$ do not commute. If $p<q$, then the presence or absence of $p$ in a chain $C$ does not affect whether $q$ is in $C$, so $t_p$ and $t_q$ commute.
\end{proof}

We have the following structure theorem, as a corollary of Theorems~\ref{thm:times} and~\ref{thm:toggpstructure}. We will need the following standard poset-theoretic definition.

\begin{definition}
Let $P_1$ and $P_2$ be posets with partial orders $\leq_{P_1}$ and $\leq_{P_2}$, respectively. The \emph{ordinal sum} of $P_1$ and $P_2$, denoted $P_1\oplus P_2$, is the poset defined on the union of the underlying ground sets $P_1$ and $P_2$ with partial order $e\leq f$ if and only if one of the following holds:
\begin{itemize}
\item $e,f\in P_1$ and $e\leq_{P_1} f$, 
\item $e,f\in P_2$ and $e\leq_{P_2} f$, or
\item $e\in P_1$ and $f\in P_2$.
\end{itemize}
\end{definition}

\begin{corollary}
Let $P$ be a finite poset and $\C(P)$ the set of chains of $P$. If $P$ is the ordinal sum of two posets $P=P_1\oplus P_2$, then $T(\C(P)) = T(\C(P_1))\times T(\C(P_2))$. If $P$ is not the ordinal sum of posets, $\C(P)$ is toggle-alternating. 
\end{corollary}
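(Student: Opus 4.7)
The proof splits into the two cases of the statement.

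First, suppose $P = P_1 \oplus P_2$. Since every element of $P_1$ lies below every element of $P_2$, a subset $C \subseteq P$ is a chain if and only if both $C \cap P_1$ is a chain of $P_1$ and $C \cap P_2$ is a chain of $P_2$. Hence $\C(P) = \C(P_1) \otimes \C(P_2)$ with disjoint essentialized ground sets $P_1$ and $P_2$, and Theorem~\ref{thm:times} yields $T(\C(P)) = T(\C(P_1)) \times T(\C(P_2))$.

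Now suppose $P$ is not an ordinal sum. The plan is to show $\C(P)$ is inductively toggle-alternating and invoke Theorem~\ref{thm:toggpstructure}, proceeding by induction on $|P|$. The main tool is the \emph{incomparability graph} $G(P)$, whose vertices are the elements of $P$ and whose edges join incomparable pairs. The first step is the characterization: \emph{$P$ is not an ordinal sum if and only if $G(P)$ is connected.} The forward direction is immediate, since a splitting $P = A \oplus B$ makes every pair in $A \times B$ comparable, so $G(P)$ has no edges between $A$ and $B$. For the converse, if $C_1, \dots, C_k$ are the connected components of $G(P)$, then any pair of elements in distinct components is comparable, and a short propagation argument along paths in each component shows these comparabilities are consistently directed (a cross-comparison of the wrong sign would, together with an incomparability in one component, violate transitivity). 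This yields a total order on the components, expressing $P$ as an ordinal sum $C_{\sigma(1)} \oplus \cdots \oplus C_{\sigma(k)}$.

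With this characterization in hand, for the base cases $|P| \leq 4$ we check computationally (as in Section~\ref{sec:orderideals}) that for each $P$ with $G(P)$ connected, $T(\C(P))$ is the full symmetric or alternating group on $\C(P)$. For the inductive step with $|P| \geq 5$, the connected graph $G(P)$ has a non-cut vertex $e$ (for instance, a leaf of any spanning tree of $G(P)$), so $G(P \setminus \{e\}) = G(P) - e$ remains connected and $P \setminus \{e\}$ is still not an ordinal sum. We verify condition~$(2\mathrm{b})$ of Definition~\ref{def:pleasant} for this $e$: the family $\C(P)_{\thickbar{e}}$ coincides with $\C(P \setminus \{e\})$ after essentialization and is inductively toggle-alternating by the inductive hypothesis; every chain $D$ containing $e$ equals $t_e(D \setminus \{e\})$ with $D \setminus \{e\} \in \C(P)_{\thickbar{e}}$, giving $\C(P)_{\thickbar{e}} \cup t_e(\C(P)_{\thickbar{e}}) = \C(P)$; and for any neighbor $p$ of $e$ in $G(P)$ the singleton chain $\{p\}$ lies in $\C(P)_{\thickbar{e}} \cap t_e(\C(P)_{\thickbar{e}})$, since $\{p,e\}$ is not a chain and so $t_e(\{p\}) = \{p\}$.

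The main obstacle is establishing the characterization of ordinal-sum-irreducibility via connectivity of $G(P)$; once this is in hand, the rest of the argument is a clean analogue of the Cameron--Fon-der-Flaass strategy used in Theorem~\ref{thm:fonderflaass}, with connectedness of the Hasse diagram replaced by connectedness of the incomparability graph, and cut/non-cut vertices supplying the element to peel off in the induction.
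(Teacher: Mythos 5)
Your proof is correct, and the first half (the ordinal sum case via $\C(P)=\C(P_1)\otimes\C(P_2)$ and Theorem~\ref{thm:times}) is exactly the paper's argument. For the second half you take a genuinely different route. The paper never introduces the incomparability graph: when $P$ is not an ordinal sum and not an antichain, it fixes a minimal element $x$ incomparable to some non-minimal $y$, peels off a \emph{different} minimal element $e$, and argues somewhat delicately that $P\setminus e$ cannot be an ordinal sum because $\{x,y\}$ remains a non-chain; it also has to treat antichains as a separate case. You instead prove the clean characterization that $P$ fails to be an ordinal sum if and only if its incomparability graph $G(P)$ is connected (your propagation argument along paths within a component is the right way to see that cross-component comparabilities are consistently oriented), and then peel off any non-cut vertex of $G(P)$. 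Your verification of condition $(2\mathrm{b})$ of Definition~\ref{def:pleasant} — identifying $\C(P)_{\thickbar{e}}$ with $\C(P\setminus\{e\})$, noting every chain through $e$ is $t_e$ of a chain avoiding $e$, and exhibiting a singleton $\{p\}$ with $p$ incomparable to $e$ as a fixed point of $t_e$ — is complete and matches what the paper leaves implicit. What your approach buys is uniformity (no antichain case split, no ad hoc choice of $x$, $y$, $e$) and, more strikingly, it makes visible that $\C(P)$ is literally $\IS(G(P))$, the set of independent sets of the incomparability graph, so that this corollary is the specialization of Corollary~\ref{cor:is} to incomparability graphs; the paper's element-by-element argument obscures this. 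The only cost is the extra lemma on ordinal-sum-irreducibility versus connectivity of $G(P)$, which is standard and which you prove correctly.
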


\begin{proof}
Suppose $P$ is the ordinal sum $P_1\oplus P_2$; this means that in $P$, each element from $P_1$ is less than every element from $P_2$. 
Thus, a chain ${C}$ in $\C(P)$ consists of the union of any chain $C_1\in \C(P_1)$ and any chain $C_2\in \C(P_2)$, where the choice of these chains is independent. By Lemma~\ref{lem:chaintog}, $t_{p_1} t_{p_2} = t_{p_2} t_{p_1}$ for any $p_1\in P_1$, $p_2\in P_2$ since $p_1<p_2$ by the definition of ordinal sum.
Thus ${\C(P)}$ will be the toggle-disjoint Cartesian product  ${\C(P_1)}\otimes {\C(P_2)}$.
Therefore, by Theorem~\ref{thm:times}, $T(\C(P)) = T(\C(P_1))\times T(\C(P_2))$.

If $P$ is not the ordinal sum of two posets, the result follows from Theorem~\ref{thm:toggpstructure} and the fact, proved below, that the set $\left\{ \C(P) \ | \ \mbox{$P$ not an ordinal sum of two posets} \right\}$ is an inductively toggle-alternating family. 

Now suppose  $P$ is not an ordinal sum, then either $P$ is an antichain, or there exists a minimal element that is incomparable to some non-minimal element. If $P$ is an antichain, then any element may be chosen as the required $e$ in condition $(2\mathrm{b})$ of Definition~\ref{def:pleasant}. If $P$ is not an antichain, 
let $x$ be a minimal element that is incomparable to some non-minimal element $y$. Pick any other minimal element $e$ (another minimal element must exist, otherwise $P$ would be the ordinal sum of $x$ and the rest of the poset); we show this will be the required $e$ in condition $(2\mathrm{b})$ of Definition~\ref{def:pleasant}. Note that $\mathcal{L}_{\thickbar{e}}$ for $\mathcal{L}=\C(P)$ does not contain the subset $\{x,y\}$, since $\{x,y\}$ is not a chain in $P$. Since $P$ is not an ordinal sum and we are removing a minimal element $e$, the only spot where $P\setminus e$ may now be an ordinal sum must involve the minimal elements, including $x$. But $P\setminus e$ is not the ordinal sum of a subset involving $x$ and the rest of the poset, since $\{x,y\}\notin \C(P\setminus e)$.

Finally, we have checked, using \verb|SageMath|~\cite{sage} (see Appendix~\ref{sec:appendix}), that for all posets with $|P|\leq 4$ such that $P$ is not an ordinal sum, $T(\C(P))=\mathfrak{S}_{\C(P)}$ or ${A}_{\C(P)}$, thus condition $(1)$ of Definition~\ref{def:pleasant} is satisfied. So this is an inductively toggle-alternating family, and the result follows by Theorem~\ref{thm:toggpstructure}.
\end{proof}

As in the case of order ideals (see Theorems~\ref{thm:prorow} and \ref{thm:DPS} in Section~\ref{sec:orderideals}), we have a theorem which says a large class of toggle orders are equivariant.  
We will need the following lemma from \cite{humphreys}, which appears as Lemma 5.1 in \cite{prorow}.

\begin{lemma}[\protect{\cite[p.\ 74]{humphreys}}]
\label{lem:51}
Let $G$ be a group whose generators $g_1,\ldots, g_n$ satisfy $g_i^2=1$ and $(g_i g_j)^2 = 1$ if
$|i - j| > 1$. Then for any $\omega, \nu \in \mathfrak{S}_n$, $\prod_i g_{\omega(i)}$ and $\prod_i g_{\nu(i)}$ are conjugate.
\end{lemma}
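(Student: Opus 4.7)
The plan is to identify two conjugacy-preserving moves on the product $\prod_i g_{\omega(i)} = g_{\omega(1)}g_{\omega(2)}\cdots g_{\omega(n)}$ and then reduce the lemma to a purely combinatorial claim about sequences.

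First I would establish the two moves. \emph{Cyclic shift}: conjugation by the leading generator together with $g_{\omega(1)}^2=1$ gives
\[
g_{\omega(1)} \cdot \bigl(g_{\omega(1)}g_{\omega(2)}\cdots g_{\omega(n)}\bigr) \cdot g_{\omega(1)} = g_{\omega(2)}g_{\omega(3)}\cdots g_{\omega(n)}g_{\omega(1)},
\]
so shifting the word one position cyclically yields a conjugate product. \emph{Commutation swap}: if $|\omega(k)-\omega(k+1)|>1$, then $(g_{\omega(k)}g_{\omega(k+1)})^2 = 1$ forces $g_{\omega(k)}g_{\omega(k+1)} = g_{\omega(k+1)}g_{\omega(k)}$, so swapping two adjacent letters of the word whose indices differ by more than one leaves the product literally unchanged. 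Any ordering reachable from $\omega$ by a finite sequence of these moves therefore yields a conjugate product.

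The lemma is thus reduced to the combinatorial claim that, viewed as length-$n$ words, any two permutations of $\{1,\ldots,n\}$ can be connected by cyclic shifts and by swaps of adjacent entries whose values differ by more than one. I would prove this by induction on $n$, verifying $n\le 3$ directly. For the inductive step, first apply cyclic shifts to place $n$ at the last position in both $\omega$ and $\nu$; the task reduces to transforming the length-$(n-1)$ prefix of $\omega$ into that of $\nu$. By induction, there is a sequence of cyclic shifts and commuting swaps on the shorter sequence that accomplishes this, and each must then be simulated by moves on the full length-$n$ sequence. Commutation swaps within the first $n-1$ positions lift immediately. A cyclic shift of the prefix is simulated by first cyclically shifting the full sequence to move $n$ off the last position, then sliding $n$ through the interior using commutations --- legal because $n$ commutes with every index except $n-1$ --- and finally cyclically shifting $n$ back to the end.

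The main obstacle is precisely this last simulation step: one must argue that $n$ can always be maneuvered through the sequence without ever becoming permanently trapped beside $n-1$. The key observation keeping the induction honest is that in any cyclic arrangement $n$ has at most one neighbor equal to $n-1$ (since $n-1$ appears exactly once), so $n$ always has at least one neighbor with which it commutes and can slide past. The fiddly part of a full write-up is carrying out the case analysis when the required prefix shift attempts to drag $n$ across $n-1$, and showing that one may instead first perform intermediate commutations in the prefix (of which there are many, since most pairs of distinct indices $\le n-1$ commute) to clear a route. Once this bookkeeping is made rigorous, the induction closes and the lemma follows.
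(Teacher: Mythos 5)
The paper does not prove this lemma; it is quoted from Humphreys as the classical fact that all ``Coxeter elements'' of a group whose non-commuting graph is (contained in) a path are conjugate, so there is no in-paper proof to compare against. Your argument is a correct rendering of the standard proof from the cited source: the two moves (cyclic shift by conjugation with an involution, and literal commutation of adjacent letters with $|i-j|>1$) are exactly the right reduction, and the remaining combinatorial connectivity claim does go through by your induction. The one obstacle you flag --- simulating a cyclic shift of the length-$(n-1)$ prefix when the letter being wrapped around is $n-1$ --- is genuinely resolvable by the maneuver you describe, and it is worth recording explicitly: from $(n-1)\,a_2\cdots a_{n-1}\,n$, slide $n$ leftward through $a_2,\dots,a_{n-1}$ (all of which are at most $n-2$, hence commute with $n$) to get $(n-1)\,n\,a_2\cdots a_{n-1}$, then apply two full cyclic shifts to obtain $a_2\cdots a_{n-1}\,(n-1)\,n$, which is the desired prefix shift with $n$ restored to the last position. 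Since $n-1$ occurs only once, $n$ never has two non-commuting neighbors in the cyclic word, so this escape route always exists and the ``case analysis'' you defer collapses to this single case. With that step written out, the proof is complete and is essentially the argument Humphreys gives.
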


For $C$ any chain of $P$, let $t_{C}$ denote the composition of all the toggles $t_p$ for  $p\in C$ (order does not matter, since by Lemma~\ref{lem:chaintog}, the toggles within a chain commute). Also, for a collection of chains $\mathbf{C}=\{C_1, C_2, \ldots, C_k\}$ and a permutation $\pi\in\mathfrak{S}_k$, let $t_{\mathbf{C}(\pi)}:=t_{C_{\pi(1)}}t_{C_{\pi(2)}}\cdots t_{C_{\pi(k)}}$.

\begin{theorem}
\label{thm:chaineq}
Fix a collection of chains $\mathbf{C}=\{C_1, C_2, \ldots, C_k\}$ in a finite poset $P$. Suppose these chains have the property that each element of $C_i$ is comparable with every element of $C_j$ provided $|i-j|>1$. Then for any $\pi, \omega\in\mathfrak{S}_k$, there is an equivariant bijection between $\mathcal{C}(P)$ under $t_{\mathbf{C}(\pi)}$ and under $t_{\mathbf{C}(\omega)}$.
\end{theorem}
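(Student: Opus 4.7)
The plan is to apply Lemma~\ref{lem:51} to the generators $g_i := t_{C_i}$ for $i = 1, \ldots, k$, and then convert the resulting conjugacy in $T(\C(P))$ into the desired equivariant bijection. So first I need to verify the two hypotheses of Lemma~\ref{lem:51}: that each $g_i$ is an involution, and that $g_i$ and $g_j$ commute whenever $|i-j| > 1$.

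For the first hypothesis, note that within a single chain $C_i$, every pair of elements is comparable by definition of a chain, so by Lemma~\ref{lem:chaintog} the toggles $\{t_p : p \in C_i\}$ pairwise commute inside $T(\C(P))$. Thus $t_{C_i}$ is a well-defined product of commuting involutions, hence itself an involution, so $g_i^2 = 1$. For the second hypothesis, the hypothesis on $\mathbf{C}$ says that every $p \in C_i$ is comparable with every $q \in C_j$ whenever $|i - j| > 1$; by Lemma~\ref{lem:chaintog} this means $t_p$ and $t_q$ commute. Combining this with the within-chain commutations from the first step, all the toggle generators appearing in $g_i$ commute with all those appearing in $g_j$, so $g_i g_j = g_j g_i$, i.e.\ $(g_i g_j)^2 = 1$.

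Now Lemma~\ref{lem:51} applies inside the subgroup of $T(\C(P))$ generated by $g_1, \ldots, g_k$, and yields that for any $\pi, \omega \in \mathfrak{S}_k$, the products $t_{\mathbf{C}(\pi)} = \prod_i g_{\pi(i)}$ and $t_{\mathbf{C}(\omega)} = \prod_i g_{\omega(i)}$ are conjugate in this subgroup, and hence in $T(\C(P)) \leq \mathfrak{S}_{\C(P)}$. Pick $h \in T(\C(P))$ with $h^{-1} t_{\mathbf{C}(\pi)} h = t_{\mathbf{C}(\omega)}$. Then the map $\Phi : \C(P) \to \C(P)$ defined by $\Phi(C) = h(C)$ is a bijection, and for every chain $C$,
\[
\Phi(t_{\mathbf{C}(\pi)}(C)) = h(t_{\mathbf{C}(\pi)}(C)) = t_{\mathbf{C}(\omega)}(h(C)) = t_{\mathbf{C}(\omega)}(\Phi(C)),
\]
so $\Phi$ is the required equivariant bijection.

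The main obstacle is really just the bookkeeping check that the within-chain toggles commute, which is what allows $t_{C_i}$ to be an unambiguously defined involution; everything else is a direct application of Lemma~\ref{lem:chaintog} and Lemma~\ref{lem:51}, following the same template used for the promotion/rowmotion equivariance theorem in~\cite{prorow}.
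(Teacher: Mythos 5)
Your proof is correct and follows essentially the same route as the paper: verify $t_{C_i}^2=1$ via the within-chain commutation of Lemma~\ref{lem:chaintog}, verify $(t_{C_i}t_{C_j})^2=1$ for $|i-j|>1$ via the comparability hypothesis, and apply Lemma~\ref{lem:51}. The only difference is that you spell out explicitly how conjugacy yields the equivariant bijection, which the paper leaves implicit.
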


\begin{proof}
For any chain $C\in P$, $t_C^2=1$, since all the toggles within a chain commute by Lemma~\ref{lem:chaintog}. Then since each element of $C_i$ is comparable with every element of $C_j$ provided $|i-j|>1$, this implies $(t_{C_i}t_{C_j})^2=1$ for $|i-j|>1$.
Therefore, the result follows from Lemma~\ref{lem:51} by considering $G$ to be the subgroup of $T(\C(P))$ generated by the toggles $\{t_{C_i} \ | \ 1\leq i\leq k\}$.
\end{proof}

\begin{example} \rm
\label{ex:chainex}
Consider the poset in Figure~\ref{fig:chainex}. Define a collection of chains as follows: $\mathbf{C}=\{C_1, C_2,\ldots, C_6\}$ where $C_i=\{i\}$. Note that each $i$ is comparable with every element in $P$ except $i\pm 1$, so $\mathbf{C}$ satisfies the conditions of Theorem~\ref{thm:chaineq}. Thus, by Theorem~\ref{thm:chaineq}, the toggle group elements obtained by toggling all the elements of this poset in any fixed order are conjugate.
\end{example}

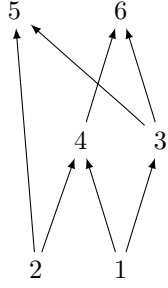
\begin{figure}[htbp]
\begin{center}
\begin{tikzpicture}[>=latex,line join=bevel,]
\node (node_5) at (46.0bp,104.5bp) [draw,draw=none] {$6$};
  \node (node_4) at (6.0bp,104.5bp) [draw,draw=none] {$5$};
  \node (node_3) at (31.0bp,55.5bp) [draw,draw=none] {$4$};
  \node (node_2) at (61.0bp,55.5bp) [draw,draw=none] {$3$};
  \node (node_1) at (14.0bp,6.5bp) [draw,draw=none] {$2$};
  \node (node_0) at (46.0bp,6.5bp) [draw,draw=none] {$1$};
  \draw [black,->] (node_0) ..controls (42.092bp,19.746bp) and (38.649bp,30.534bp)  .. (node_3);
  \draw [black,->] (node_1) ..controls (12.32bp,27.658bp) and (9.1253bp,65.997bp)  .. (node_4);
  \draw [black,->] (node_0) ..controls (49.908bp,19.746bp) and (53.351bp,30.534bp)  .. (node_2);
  \draw [black,->] (node_1) ..controls (18.429bp,19.746bp) and (22.331bp,30.534bp)  .. (node_3);
  \draw [black,->] (node_3) ..controls (34.908bp,68.746bp) and (38.351bp,79.534bp)  .. (node_5);
  \draw [black,->] (node_2) ..controls (57.092bp,68.746bp) and (53.649bp,79.534bp)  .. (node_5);
  \draw [black,->] (node_2) ..controls (46.319bp,69.045bp) and (31.3bp,81.88bp)  .. (node_4);
\end{tikzpicture}
\end{center}
\caption{The poset of Example~\ref{ex:chainex}.}
\label{fig:chainex}
\end{figure}

\subsection{Antichains} 
\label{sec:antichains}
An \emph{antichain} in a poset $P$ is a set of mutually incomparable elements; denote the set of antichains of $P$ as $\A(P)$. Though antichains are in simple bijection with order ideals (the maximal elements of an order ideal are an antichain), antichain toggles differ from order ideal toggles. With antichains, as with chains, the elements may always be toggled `out', whereas this is not the case in the order ideal toggle group.

\begin{lemma}
\label{lem:antitog}
Let $P$ be a poset. In the antichain toggle group $T(\A(P))$, $(t_p t_q)^2=1$ if and only if $p$ and $q$ are incomparable in $P$.
\end{lemma}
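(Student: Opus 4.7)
The plan is to mirror the proof of Lemma~\ref{lem:chaintog} with the roles of ``comparable'' and ``incomparable'' interchanged, since antichains are in a natural sense dual to chains with respect to the comparability relation. Observe that because $t_p^2 = t_q^2 = 1$, the relation $(t_p t_q)^2 = 1$ is equivalent to $t_p$ and $t_q$ commuting; so I would reformulate the lemma as: $t_p$ and $t_q$ commute in $T(\A(P))$ if and only if $p$ and $q$ are incomparable.

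For the easy direction, I would assume $p$ and $q$ are incomparable and argue that for any antichain $A \in \A(P)$, the question of whether $t_q$ acts nontrivially on $A$ (i.e., whether $A \cup \{q\} \in \A(P)$ when $q \notin A$, or $A \setminus \{q\} \in \A(P)$ when $q \in A$) is determined by whether $q$ is incomparable with every element of $A \setminus \{q\}$. Because $p$ and $q$ are incomparable, adding or removing $p$ from $A$ cannot affect this condition, and symmetrically the analogous condition for $t_p$ is unaffected by the presence of $q$. Consequently $t_p t_q(A) = t_q t_p(A)$ for every $A \in \A(P)$, so the toggles commute.

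For the converse, I would argue by contrapositive: suppose $p$ and $q$ are comparable, without loss of generality $p < q$. Then the empty set $\emptyset$ is an antichain, and so are the singletons $\{p\}$ and $\{q\}$, while $\{p,q\}$ is not (since $p < q$). A direct computation gives $t_p t_q(\emptyset) = t_p(\{q\}) = \{q\}$, because attempting to adjoin $p$ to $\{q\}$ would produce a non-antichain; similarly $t_q t_p(\emptyset) = \{p\}$. Since $\{p\} \neq \{q\}$, the toggles $t_p$ and $t_q$ do not commute, completing the proof.

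The only potential subtlety is the clean handling of the incomparable case, where one must verify that the \emph{obstruction} to toggling an element is a purely ``local'' condition that depends only on the comparabilities between that element and the other members of the antichain. Once this observation is made explicit, both directions are routine, and indeed the entire argument is structurally identical to the proof of Lemma~\ref{lem:chaintog}, with ``chain'' replaced by ``antichain'' and ``comparable'' swapped with ``incomparable''.
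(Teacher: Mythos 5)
Your proof is correct and follows essentially the same route as the paper: the noncommuting direction uses the identical computation $t_p t_q(\emptyset) = \{q\} \neq \{p\} = t_q t_p(\emptyset)$ when $p < q$, and the commuting direction rests on the same observation that whether $q$ can be toggled in or out of an antichain depends only on the comparabilities of $q$ with the other members, which $p$ cannot affect when $p$ and $q$ are incomparable. Your explicit remark that $(t_p t_q)^2 = 1$ is equivalent to commutativity because toggles are involutions is a useful clarification that the paper leaves implicit.
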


\begin{proof}
Note $\emptyset$ and single element subsets are all in $\A(P)$.
If $p<q$ in $P$, then $t_p t_q(\emptyset) = \{q\}$ whereas $t_q t_p(\emptyset) = \{p\}$. So $t_p$ and $t_q$ do not commute. If $p$ and $q$ are incomparable, then the presence or absence of $p$ in an antichain $A$ does not affect whether $q$ is in $A$. So $t_p$ and $t_q$ commute.
\end{proof}

This differs from the case of order ideals in which $t_p$ and $t_q$ are noncommuting only when $p$ and $q$ share a covering relation. 
But as in the case of order ideals, we have the following structure description as a corollary of Theorems~\ref{thm:times} and~\ref{thm:toggpstructure}.
\begin{corollary}
Let $P$ be a finite poset and $\A(P)$ the set of antichains of $P$. If $P$ is the disjoint union of two posets $P=P_1+ P_2$, then $T(\A(P)) = T(\A(P_1))\times T(\A(P_2))$. If $P$ is not the disjoint union of two posets, $\A(P)$ is toggle-alternating. 
\end{corollary}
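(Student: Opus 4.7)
The plan is to follow the template set by the order ideal and chain proofs, splitting the statement into two cases and invoking Theorems~\ref{thm:times} and~\ref{thm:toggpstructure} respectively.

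For the disjoint union case $P = P_1 + P_2$, every antichain $A \in \A(P)$ decomposes uniquely as $A = A_1 \cup A_2$ with $A_i = A \cap P_i \in \A(P_i)$, and any such union is again an antichain because elements of $P_1$ and $P_2$ are pairwise incomparable. This realizes $\A(P) = \A(P_1) \otimes \A(P_2)$ as a toggle-disjoint Cartesian product (the essentialized ground sets are disjoint), so Theorem~\ref{thm:times} immediately gives $T(\A(P)) = T(\A(P_1)) \times T(\A(P_2))$.

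For the connected case, I would show the family $\{\A(P) \mid P \text{ has connected Hasse diagram}\}$ is inductively toggle-alternating, then invoke Theorem~\ref{thm:toggpstructure}. A useful initial observation is that condition~(2a) of Definition~\ref{def:pleasant} can never apply to antichain toggle groups: for any $A \in \A(P)_e$ we have $t_e(A) = A \setminus \{e\} \in \A(P)_{\thickbar{e}}$, so $\A(P)_e \cap t_e(\A(P)_e) = \emptyset$. Thus I must verify condition~(2b). For connected $P$ with $|P| \geq 5$, I would choose $e \in P$ to be a non-cut vertex of the Hasse diagram; such a vertex exists by the standard fact that any connected graph on at least two vertices has a non-cut vertex (e.g.\ any leaf of a spanning tree). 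Then $P \setminus \{e\}$ still has connected Hasse diagram (removing $e$ from the Hasse diagram of $P$ already leaves a connected graph, and the Hasse diagram of $P\setminus\{e\}$ can only gain further edges), so $\A(P)_{\thickbar{e}} = \A(P \setminus \{e\})$ is inductively toggle-alternating by the inductive hypothesis. The union condition holds because every $A \in \A(P)_e$ equals $t_e(A \setminus \{e\})$ with $A \setminus \{e\} \in \A(P)_{\thickbar{e}}$; the intersection condition holds by picking any $f \in P$ comparable to $e$ (which exists since $P$ is connected with $|P| \geq 2$) and observing that $\{f\} \in \A(P)_{\thickbar{e}}$ is fixed by $t_e$ because $\{e,f\}$ is not an antichain. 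The base cases $|P| \leq 4$ for connected $P$ will be verified in SageMath, parallel to the order ideal and chain treatments.

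The main subtlety is ensuring the induction stays inside the family of connected posets, which is precisely why $e$ must be chosen as a non-cut vertex rather than an arbitrary element: a poor choice of $e$ could break $P$ into components, producing a toggle-disjoint Cartesian product whose toggle group by Theorem~\ref{thm:times} is a direct product rather than an alternating group, so the inductive hypothesis would not apply. The non-cut vertex choice sidesteps this entirely and makes the induction go through.
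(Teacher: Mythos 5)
Your proof is correct and follows essentially the same route as the paper: realizing $\A(P)$ as the toggle-disjoint Cartesian product $\A(P_1)\otimes\A(P_2)$ and applying Theorem~\ref{thm:times} in the disconnected case, and showing $\{\A(P) \mid P \text{ connected}\}$ is inductively toggle-alternating via condition $(2\mathrm{b})$ of Definition~\ref{def:pleasant} and applying Theorem~\ref{thm:toggpstructure} in the connected case. The only (harmless) deviation is the choice of $e$: the paper takes a maximal or minimal element whose removal leaves the poset connected, while you take an arbitrary non-cut vertex of the Hasse diagram; your verification of $(2\mathrm{b})$ correctly shows extremality of $e$ is not actually needed for antichains, and the existence of a non-cut vertex is a standard graph-theoretic fact, so your choice is, if anything, slightly easier to justify.
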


\begin{proof}
If $P=P_1 + P_2$, an antichain $A$ in $\A(P)$ consists of the union of any antichain $A_1$ in $\A(P_1)$ and any antichain $A_2$ in $\A(P_2)$, where the choice of $A_1$ and $A_2$ is independent. Now since $p_1$ and $p_2$ are incomparable for every $p_1\in P_1$, $p_2\in P_2$, we have by Lemma~\ref{lem:antitog} $t_{p_1} t_{p_2} = t_{p_2} t_{p_1}$.
Thus, ${\A(P)}$ will be the toggle-disjoint Cartesian product ${\A(P_1)} \otimes {\A(P_2)}$. 
Therefore, by Theorem~\ref{thm:times}, $T(\A(P)) = T(\A(P_1))\times T(\A(P_2))$.

If $P$ is not the disjoint union of two posets, the result follows from Theorem~\ref{thm:toggpstructure} and the fact, proved below, that the set $\{ \mathcal{A}(P) \ | \ P \mbox{ is a connected poset}  \}$ is an inductively toggle-alternating family. 

For given a connected poset $P$, you may choose any maximal or minimal element $e\in P$ such that the poset $P\setminus{\{e\}}$ is also connected to be the required $e$ in condition $(2\mathrm{b})$ of Definition~\ref{def:pleasant}. 

Finally, we have checked, using \verb|SageMath|~\cite{sage} (see Appendix~\ref{sec:appendix}), that for all connected posets with $|P|\leq 4$, $T(\A(P))=\mathfrak{S}_{\A(P)}$ or ${A}_{\A(P)}$, thus condition $(1)$ of Definition~\ref{def:pleasant} is satisfied. So this is an inductively toggle-alternating family, and the result follows by Theorem~\ref{thm:toggpstructure}.
\end{proof}

As in the cases of order ideals and chains, we have a theorem which says a large class of toggle orders are equivariant. 
For $A$ any antichain of $P$, let $t_{A}$ denote the composition of all the toggles $t_p$ for  $p\in A$ (order does not matter, since by Lemma~\ref{lem:antitog}, the toggles within an antichain commute). Also, for a collection of antichains $\mathbf{A}=\{A_1, A_2, \ldots, A_k\}$ and a permutation $\pi\in\mathfrak{S}_k$, let $t_{\mathbf{A}(\pi)}:=t_{A_{\pi(1)}}t_{A_{\pi(2)}}\cdots t_{A_{\pi(k)}}$.

\begin{theorem}
\label{thm:antichaineq}
Fix a collection of antichains $\mathbf{A}=\{A_1, A_2, \ldots, A_k\}$ in a finite poset $P$. Suppose these antichains have the property that each element of $A_i$ is incomparable with every element of $A_j$ provided $|i-j|>1$. Then for any $\pi, \omega\in\mathfrak{S}_k$, there is an equivariant bijection between $\mathcal{A}(P)$ under $t_{\mathbf{A}(\pi)}$ and under $t_{\mathbf{A}(\omega)}$.
\end{theorem}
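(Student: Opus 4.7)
The plan is to follow the same strategy as Theorem~\ref{thm:chaineq}, with Lemma~\ref{lem:antitog} playing the role of Lemma~\ref{lem:chaintog}. Specifically, I would show that the generators $t_{A_1}, \ldots, t_{A_k}$ satisfy the hypotheses of Lemma~\ref{lem:51} inside $T(\mathcal{A}(P))$, then extract an equivariant bijection from the resulting conjugacy.

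First I would verify that $t_{A_i}^2 = 1$ for each $i$. Since $A_i$ is an antichain, its elements are pairwise incomparable, and Lemma~\ref{lem:antitog} tells us that the individual toggles $t_p$ for $p \in A_i$ pairwise commute. Each such $t_p$ is an involution, so the commuting product $t_{A_i}$ is also an involution. Next I would verify $(t_{A_i} t_{A_j})^2 = 1$ whenever $|i-j|>1$. The hypothesis says every $p \in A_i$ is incomparable with every $q \in A_j$, so by Lemma~\ref{lem:antitog} every $t_p$ with $p \in A_i$ commutes with every $t_q$ with $q \in A_j$. It follows that $t_{A_i}$ and $t_{A_j}$ commute, hence $(t_{A_i} t_{A_j})^2 = 1$.

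With these relations in hand, let $G$ be the subgroup of $T(\mathcal{A}(P))$ generated by $\{t_{A_1}, \ldots, t_{A_k}\}$. Lemma~\ref{lem:51} applies and shows that $t_{\mathbf{A}(\pi)} = \prod_i t_{A_{\pi(i)}}$ and $t_{\mathbf{A}(\omega)} = \prod_i t_{A_{\omega(i)}}$ are conjugate in $G$; say $t_{\mathbf{A}(\omega)} = h\, t_{\mathbf{A}(\pi)}\, h^{-1}$ for some $h \in G \subseteq T(\mathcal{A}(P))$. Then the action of $h$ on $\mathcal{A}(P)$ is a bijection that intertwines the two toggle orders, since by construction $h \circ t_{\mathbf{A}(\pi)} = t_{\mathbf{A}(\omega)} \circ h$. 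This is the desired equivariant bijection between $\mathcal{A}(P)$ under $t_{\mathbf{A}(\pi)}$ and under $t_{\mathbf{A}(\omega)}$.

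There is no real obstacle: the proof is a direct transcription of the chain case, with the roles of ``comparable'' and ``incomparable'' swapped to reflect the opposite commutation relation in Lemma~\ref{lem:antitog} versus Lemma~\ref{lem:chaintog}. The only minor point to keep track of is that the hypothesis ``each element of $A_i$ is incomparable with every element of $A_j$ for $|i-j|>1$'' is exactly what Lemma~\ref{lem:antitog} requires to conclude the commutation $t_{A_i} t_{A_j} = t_{A_j} t_{A_i}$ on the nose, so the braid-like relations of Lemma~\ref{lem:51} are fed the correct input.
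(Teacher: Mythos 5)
Your proof is correct and follows essentially the same route as the paper: verify the involution and commutation relations via Lemma~\ref{lem:antitog}, then invoke Lemma~\ref{lem:51} on the subgroup generated by the $t_{A_i}$. You spell out the final step (the conjugating element $h$ realizing the equivariant bijection) slightly more explicitly than the paper does, but the argument is the same.
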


\begin{proof}
For any antichain $A\in P$, $t_A^2=1$, since all the toggles within an antichain commute by Lemma~\ref{lem:antitog}. Then since each element of $A_i$ is incomparable with every element of $A_j$ provided $|i-j|>1$, this implies $(t_{A_i}t_{A_j})^2=1$ for $|i-j|>1$.
Therefore, the result follows from Lemma~\ref{lem:51} by considering $G$ to be the subgroup of $T(\A(P))$ generated by the toggles $\{t_{A_i} \ | \  1\leq i\leq k\}$.
\end{proof}

\begin{example} \rm
\label{ex:antichainex}
Let $P$ be the poset in Figure~\ref{fig:antichainex2}. Define a collection of antichains as follows: $\mathbf{A}=\{A_1,\ldots, A_6\}$ where $A_i=\{i\}$. Note that each $i$ is incomparable with every element in $P$ except $i\pm 1$, so $\mathbf{A}$ satisfies the conditions of Theorem~\ref{thm:antichaineq}. Thus, by Theorem~\ref{thm:antichaineq}, the toggle group elements obtained by toggling all the elements of this poset in any fixed order are conjugate.
\end{example}

\begin{figure}[htbp]
\begin{center}
\begin{tikzpicture}[>=latex,line join=bevel,]
\node (node_5) at (66.0bp,55.5bp) [draw,draw=none] {$6$};
  \node (node_4) at (66.0bp,6.5bp) [draw,draw=none] {$5$};
  \node (node_3) at (36.0bp,55.5bp) [draw,draw=none] {$4$};
  \node (node_2) at (36.0bp,6.5bp) [draw,draw=none] {$3$};
  \node (node_1) at (6.0bp,55.5bp) [draw,draw=none] {$2$};
  \node (node_0) at (6.0bp,6.5bp) [draw,draw=none] {$1$};
  \draw [black,->] (node_2) ..controls (28.047bp,19.96bp) and (20.84bp,31.25bp)  .. (node_1);
  \draw [black,->] (node_0) ..controls (6.0bp,19.603bp) and (6.0bp,30.062bp)  .. (node_1);
  \draw [black,->] (node_4) ..controls (58.047bp,19.96bp) and (50.84bp,31.25bp)  .. (node_3);
  \draw [black,->] (node_2) ..controls (36.0bp,19.603bp) and (36.0bp,30.062bp)  .. (node_3);
  \draw [black,->] (node_4) ..controls (66.0bp,19.603bp) and (66.0bp,30.062bp)  .. (node_5);
\end{tikzpicture}
\end{center}
\caption{The poset of Example~\ref{ex:antichainex}.}
\label{fig:antichainex2}
\end{figure}
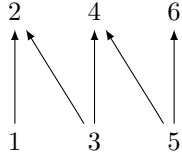

\begin{remark} \rm
Examples~\ref{ex:chainex} and \ref{ex:antichainex} are the same generalized toggle group in different settings, since the set of chains in Example~\ref{ex:chainex} (as a family of subsets of ${\{1,2,\ldots,6\}}$) equals the set of antichains in~\ref{ex:antichainex}.
\end{remark}

\subsection{Interval-closed sets}
\label{sec:ic}
An \emph{interval-closed set} in a poset $P$ is a subset $I\subseteq P$ such that if $x,y\in I$ and $x\leq y$, then for any $z\in P$ satisfying $x\leq z\leq y$ we also have $z\in I$. Denote the set of interval-closed sets of a poset as $\mathcal{IC}(P)$. 

Though the toggle group of interval-closed sets behaves similarly to the order ideal toggle group, we will see in Section~\ref{sec:row} that \emph{cover-closure} on $\mathcal{IC}(P)$ behaves quite differently than in the case of order ideals. In particular, we will not generally be able to express cover-closure as a product of toggles as in the case of rowmotion on order ideals.

\smallskip
We have the following toggle commutativity relation.
\begin{lemma}
\label{lem:ic}
In the interval-closed set toggle group $T(\mathcal{IC}(P))$, $(t_p t_q)^2=1$ if and only if $p$ and $q$ are incomparable in $P$ or (in the case $p<q$) if $q$ covers $p$ where $q$ is maximal and $p$ is minimal.
\end{lemma}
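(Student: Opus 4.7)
The plan is to adapt the case analysis of Lemma~\ref{lem:oicommute} to the interval-closed setting, which is sensitive to elements both above and below a given element. Throughout I will use the explicit conditions for toggling: for $e \notin I$, $t_e$ adds $e$ if and only if for every $c \in I$ with $c > e$, every $z$ with $e < z < c$ lies in $I$ (and symmetrically below $e$); for $e \in I$, $t_e$ removes $e$ if and only if there is no pair $a, c \in I \setminus \{e\}$ with $a < e < c$.

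For the ``if'' direction I would split into two cases. When $p$ and $q$ are incomparable, these conditions for $t_p$ to act on $I$ involve only elements of $I$ comparable to $p$, and $q$ is not among them; hence the action of $t_p$ on $I$ is independent of whether $q \in I$, and symmetrically for $t_q$. Since each toggle changes only the membership of its own element, the two toggles commute. When $p < q$ with $q$ covering $p$, $p$ minimal, and $q$ maximal, I would do casework on whether $p$ and $q$ belong to $I$. Three features of this configuration are exploited: (i)~minimality of $p$ makes $I \setminus \{p\}$ automatically interval-closed, so removing $p$ always succeeds (and dually for $q$); (ii)~since $q$ covers $p$, the ``between $p$ and $q$'' portion of the addition condition is vacuous, so inserting $p$ into a set containing $q$ adds no constraint coming from $q$; (iii)~any $c \in I$ with $c > p$ and $c \ne q$ must be incomparable to $q$ (using $q$ maximal together with the cover relation), so the addition condition for $t_p$ involves only elements of $I$ that are unaffected by inserting or deleting $q$. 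A symmetric statement holds for $t_q$, and a routine check across the four subcases then verifies $t_p t_q(I) = t_q t_p(I)$.

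For the ``only if'' direction I would assume without loss of generality that $p < q$ and exhibit an interval-closed set witnessing non-commutation in each of the three ways the hypothesis can fail. If $q$ does not cover $p$, fix $r$ with $p < r < q$ and take $I = \emptyset$: then $t_q t_p(\emptyset) = \{p\}$ and $t_p t_q(\emptyset) = \{q\}$, since in each ordering the second toggle is blocked by the missing witness $r$. If $q$ covers $p$ but $p$ is not minimal, fix $s < p$ and take $I = \{x \in P : x < q\}$, an order ideal (hence interval-closed) containing both $s$ and $p$; direct computation shows $t_q(I) = I \cup \{q\}$ and $t_p(I) = I \setminus \{p\}$, whereas $t_p(I \cup \{q\}) = I \cup \{q\}$ (removing $p$ is now blocked by the straddling pair $s < p < q$) and $t_q(I \setminus \{p\}) = I \setminus \{p\}$ (adding $q$ is blocked symmetrically), so the two compositions differ. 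The third sub-case, $q$ covers $p$ but $q$ is not maximal, follows dually by taking $I' = \{x \in P : x > p\}$ and the element $s > q$.

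The main obstacle will be the bookkeeping in Case~(ii) of the ``if'' direction: the three hypotheses (cover, $p$ minimal, $q$ maximal) must be invoked together so that every addition and removal condition genuinely decouples across all four $\{p,q\} \cap I$ configurations, and there is a temptation to miss a subtle interaction coming from elements incomparable to $p$ or $q$. The constructions in the ``only if'' direction are each local and rely on a single witness element, making that direction comparatively clean.
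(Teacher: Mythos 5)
Your proof is correct and follows the same overall case structure as the paper's: exhibit explicit witnesses of non-commutation for the three ways the hypothesis can fail, and argue commutation directly in the two good cases. The differences are worth recording. In the ``only if'' direction you use the principal down-set $\{x : x < q\}$ (resp.\ up-set) as the witness where the paper uses the single element $\{r\}$ with $r\lessdot p$ (resp.\ $\{s\}$ with $s\gtrdot q$); both work, the paper's being slightly more economical. The substantive divergence is in the ``if'' direction for the case $p\lessdot q$ with $p$ minimal and $q$ maximal. The paper disposes of this in one sentence by asserting that the union of any interval-closed set with any subset of $\{p,q\}$ is again interval-closed, so that $t_p$ and $t_q$ always act nontrivially. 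That assertion is actually false in general: take $P=\{p,q,q',c\}$ with covers $p\lessdot q$, $p\lessdot q'$, $q'\lessdot c$; then $\{c\}\in\mathcal{IC}(P)$ but $\{p,c\}\notin\mathcal{IC}(P)$ since $p<q'<c$, so $t_p$ fixes $\{c\}$ even though $p$ is minimal and is covered by the maximal element $q$. Commutation nevertheless holds, and your decoupling argument is the one that proves it: $p$ (resp.\ $q$) can always be removed by minimality (resp.\ maximality); the condition for adding $p$ quantifies over elements $c\in I$ with $c>p$ and the intervals $(p,c)$, and none of these can be $q$ (the interval $(p,q)$ is empty by the cover relation, any other such $c$ is incomparable to $q$, and no $z$ in such an interval can equal the maximal element $q$); dually for adding $q$. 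Hence each toggle's effect is determined by $I$ with the other element deleted, and the four-subcase check goes through. So your extra care in Case~(ii) is not just bookkeeping --- it repairs the one genuinely delicate step.
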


\begin{proof}
Note $\emptyset\in \mathcal{IC}(P)$. If $p<q$ in $P$ and $q$ does not cover $p$, then $t_p t_q(\emptyset) = \{q\}$ whereas $t_q t_p(\emptyset) = \{p\}$. So $t_p$ and $t_q$ do not commute. 

If $q$ covers $p$ 
and there exists an element $r$ covered by $p$, then $t_p t_q(\{r\}) = \{r\}$ whereas $t_q t_p(\{r\}) = \{p,q,r\}$. If $q$ covers $p$ 
and there exists an element $s$ covering $q$, $t_p t_q(\{s\}) = \{p,q,s\}$ whereas $t_q t_p(\{s\}) = \{s\}$. If no such $r$ or $s$ exists, that is, if $p$ is minimal and $q$ is maximal, then $t_p$ and $t_q$ commute. This is because the union of any interval-closed set with any subset of $\{p,q\}$ is still an interval-closed set, so $t_p$ and $t_q$ always act nontrivially on any interval-closed set.

If $p$ and $q$ are incomparable, then by definition the presence or absence of $p$ in an interval-closed set $I$ does not affect whether $q$ is in $I$. So $t_p$ and $t_q$ commute.
\end{proof}

We have the following structure description in Corollary~\ref{cor:icsets} as a corollary of Theorems~\ref{thm:times}
 and~\ref{thm:toggpstructure} and Lemma~\ref{lem:ic}. First, we will need the following non-standard definitions.

\begin{definition}
Let $P$ be a poset. 
If an element $m\in P$ is a maximal element that only covers minimal elements or a minimal element that is only covered by maximal elements, call $m$ \emph{extremal-atomic}. 
\end{definition}

\begin{definition}
\label{def:seaf}
Given a poset $P$ and an ordered subset of elements $p_1,p_2,\ldots,$ $p_n$ in  $P$, 
define $P_i$ recursively as the induced subposet $P_i:=P_{i-1}\setminus p_i$ with $P_0:=P$.
If $P_n$ is a chain with at least three elements,  $p_{i+1}$ is maximal or minimal in $P_i$ for each $0\leq i\leq n-1$, and each $P_i$ has connected Hasse diagram and no extremal-atomic element, call $P$ \emph{strongly-extremal-atomic-free}.
\end{definition}

\begin{corollary}
\label{cor:icsets} Let $P$ be a finite poset and $\I(P)$ the set of interval-closed sets of $P$. 
If $P$ is the disjoint union of two posets $P=P_1+ P_2$, then $T(\I(P)) = T(\I(P_1))\times T(\I(P_2))$. Suppose $P$ is not the disjoint union of two posets. If $P$ contains an extremal-atomic element $m$,
then $T(\I(P))=T(\I(P\setminus \{m\}))\times T(\I(\{m\}))= T(\I(P\setminus \{m\}))\times \mathfrak{S}_2$. (So, in particular, if $P$ has only two ranks, then $T(P)=(\mathfrak{S}_2)^{|P|}$.)
If $P$ is strongly-extremal-atomic-free, then
$\I(P)$ is toggle-alternating. 
\end{corollary}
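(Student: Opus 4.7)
The plan is to handle the three assertions in the statement in turn, each by applying one of Theorems~\ref{thm:times} and~\ref{thm:toggpstructure} once Lemma~\ref{lem:ic} has pinned down the relevant commutations.

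For the disjoint-union case $P=P_1+P_2$, I would verify that $\mathcal{IC}(P)=\mathcal{IC}(P_1)\otimes\mathcal{IC}(P_2)$ in the sense of Definition~\ref{def:togdisjoint}: since no element of $P_1$ is comparable with any element of $P_2$, every interval-closed set of $P$ splits uniquely as an independent union of IC sets of each factor, and the essentialized ground sets are disjoint, so Theorem~\ref{thm:times} gives the direct-product decomposition. For the extremal-atomic case, Lemma~\ref{lem:ic} already yields that $t_m$ commutes with every other toggle, since each $p\neq m$ either is incomparable to $m$ or lies opposite $m$ in the "maximal covers minimal" relation. I would then argue $\mathcal{IC}(P)=\mathcal{IC}(P\setminus\{m\})\otimes\mathcal{IC}(\{m\})$: the only elements of $P$ comparable with $m$ sit on the opposite extremal rank, so adjoining or deleting $m$ in an IC set of $P\setminus\{m\}$ is always permissible and creates no new gap. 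Theorem~\ref{thm:times} now supplies the factorization with an $\mathfrak{S}_2$ factor, and the parenthetical for two-ranked $P$ follows by iteration: in a two-ranked poset every element is extremal-atomic, so stripping off one at a time (using the disjoint-union case to handle any disconnection that appears) accumulates $(\mathfrak{S}_2)^{|P|}$.

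For the strongly-extremal-atomic-free case, the strategy is to show that the family $\{\mathcal{IC}(P) \mid P \text{ strongly-extremal-atomic-free}\}$ is inductively toggle-alternating and then invoke Theorem~\ref{thm:toggpstructure}. Given such $P$ with witness sequence $(p_1,\ldots,p_n)$, I would take $e=p_1$ as the distinguished element in Definition~\ref{def:pleasant}(2). The key identification is $\mathcal{L}_{\thickbar{p_1}}\cong\mathcal{IC}(P\setminus\{p_1\})$, valid because $p_1$ is maximal or minimal in $P$ so no IC-forcing interval between elements of $P\setminus\{p_1\}$ can pass through $p_1$; by Definition~\ref{def:seaf} the smaller poset is strongly-extremal-atomic-free via $(p_2,\ldots,p_n)$, and hence inductively toggle-alternating by the induction hypothesis. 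The union condition $\mathcal{L}_{\thickbar{p_1}}\cup t_{p_1}(\mathcal{L}_{\thickbar{p_1}})=\mathcal{L}$ holds because, again by extremality of $p_1$, removing $p_1$ from any IC set containing it leaves an IC set. For the nonempty-intersection condition I would exploit that $p_1$ is not extremal-atomic in $P$: assuming $p_1$ is maximal, it covers some non-minimal $y$, and choosing any $x$ covered by $y$, the IC set $\{x\}$ satisfies $\{x\}\cup\{p_1\}\notin\mathcal{IC}(P)$ because $y$ lies strictly between $x$ and $p_1$; hence $t_{p_1}$ fixes $\{x\}$, providing the required element in $\mathcal{L}_{\thickbar{p_1}}\cap t_{p_1}(\mathcal{L}_{\thickbar{p_1}})$. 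The minimal case is symmetric and invokes condition (2a). Base cases $|P|\leq 4$ are verified via the \texttt{SageMath} code in Appendix~\ref{sec:appendix}.

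The main obstacle I expect is in the strongly-extremal-atomic-free case, both in making the identification $\mathcal{L}_{\thickbar{p_1}}\cong\mathcal{IC}(P\setminus\{p_1\})$ clean at the level of essentialization and in confirming that the list of conditions bundled into Definition~\ref{def:seaf} is precisely what is needed to keep the induction running: "no extremal-atomic element" must supply the fixed IC set demanded by the intersection condition, while connectedness of each $P_i$ must prevent the argument from being short-circuited by the disjoint-union case before the sequence reaches its terminal chain.
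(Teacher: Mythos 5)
Your overall route coincides with the paper's: Lemma~\ref{lem:ic} plus Theorem~\ref{thm:times} for the two product statements, and then verifying that $\{\I(P) \mid P \text{ strongly-extremal-atomic-free}\}$ is inductively toggle-alternating with $p_1$ as the witness element, the base cases $|P|\leq 4$ being checked by computer. You in fact supply more detail than the paper does at two points where it is terse: the Cartesian-product structure $\I(P)=\I(P\setminus\{m\})\otimes\I(\{m\})$ behind the extremal-atomic factorization (commutation of $t_m$ alone is not quite the whole story), and the explicit verification of the union and intersection conditions of Definition~\ref{def:pleasant} using the non-extremal-atomicity of $p_1$. Those verifications are correct as you state them for a maximal $p_1$.

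The one step that would fail is your claim that when $p_1$ is minimal the argument ``is symmetric and invokes condition $(2\mathrm{a})$.'' That is the right reflex for order ideals (where a minimal element can always be toggled \emph{in}), but it is wrong for interval-closed sets. Condition $(2\mathrm{a})$ requires $\mathcal{L}_{p_1}\cup t_{p_1}(\mathcal{L}_{p_1})=\mathcal{L}$, i.e.\ that every interval-closed set $Y$ not containing $p_1$ satisfies $Y\cup\{p_1\}\in\I(P)$; precisely because $p_1$ is \emph{not} extremal-atomic, there is a non-maximal $y$ covering $p_1$ and an $x$ covering $y$, and then $\{x\}$ violates this. Moreover $\mathcal{L}_{p_1}$ is not isomorphic to $\I(P\setminus\{p_1\})$ (it corresponds only to those $Y\in\I(P\setminus\{p_1\})$ with $Y\cup\{p_1\}$ interval-closed), so the induction would not close up. The fix is that condition $(2\mathrm{b})$ applies verbatim whether $p_1$ is maximal or minimal, as the paper asserts: for any extremal $p_1$ one has $\mathcal{L}_{\thickbar{p_1}}\cong\I(P\setminus\{p_1\})$ and $\mathcal{L}_{\thickbar{p_1}}\cup t_{p_1}(\mathcal{L}_{\thickbar{p_1}})=\mathcal{L}$ because an extremal element can always be toggled \emph{out}, and your own witness $\{x\}$ (with $p_1\lessdot y\lessdot x$ in the minimal case) gives the nonempty intersection. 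With that correction your proof matches the paper's.
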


\begin{proof}
If $P=P_1 + P_2$, an interval-closed set $I$ in $\I(P)$ consists of the union of any interval-closed set $I_1$ in $\I(P_1)$ and any interval-closed set $I_2$ in $\I(P_2)$, where the choice of $I_1$ and $I_2$ is independent. Now since $p_1$ and $p_2$ are incomparable for every $p_1\in P_1$, $p_2\in P_2$, we have by Lemma~\ref{lem:ic}, $t_{p_1} t_{p_2} = t_{p_2} t_{p_1}$. Thus, $\I(P)$ is a toggle-disjoint Cartesian product of $P_1$ and $P_2$.
Therefore by Theorem~\ref{thm:times}, $T(\I(P)) = T(\I(P_1))\times T(\I(P_2))$.

If $P$ contains an 
extremal-atomic element $m$, then by Lemma~\ref{lem:ic}, $m$ commutes with all other elements of $P$. So $T(\I(P))=T(\I(P\setminus \{m\}))\times T(\I(\{m\}))= T(\I(P\setminus \{m\}))\times \mathfrak{S}_2$.

Suppose $P$ is strongly-extremal-atomic-free. In this case, the result follows from Theorem~\ref{thm:toggpstructure} if we can show that the set of interval-closed sets of strongly-extremal-atomic-free posets
is an inductively toggle-alternating family. 
Given a strongly-extremal-atomic-free poset with its specialized sequence of elements $\{p_1,p_2,\ldots,p_n\}$, 
$p_1$ will be the required $e$ in condition $(2\mathrm{b})$ of Definition~\ref{def:pleasant}. 
We have checked, using \verb|SageMath|~\cite{sage} (see Appendix~\ref{sec:appendix}), that for all strongly-extremal-atomic-free posets with $|P|\leq 4$, $T(\I(P))=\mathfrak{S}_{\I(P)}$ or ${A}_{\I(P)}$, thus condition $(1)$ in Definition~\ref{def:pleasant} is satisfied. 
So this is an inductively toggle-alternating family, and the result follows by Theorem~\ref{thm:toggpstructure}.
\end{proof}

\begin{remark} \rm
\label{remark:icnotpleasant}
We speculate that in Corollary~\ref{cor:icsets}, the condition 
`strongly-extremal-atomic-free' may be relaxed to `contains no extremal-atomic 
element'; we have checked using \verb|SageMath|~\cite{sage} (see Appendix~\ref{sec:appendix}) that this holds for $|P|\leq 4$. A proof in the case of this relaxed condition would then complete the toggle group structure theorem for interval-closed sets on any poset. For example, the poset $P$ defined by the covers: $a\lessdot b\lessdot c, a\lessdot d\lessdot e$ contains no extremal-atomic element, but is not strongly-extremal-atomic-free. And yet, we have computed that $|\I(P)|=25$ and $T(\I(P))=\mathfrak{S}_{25}$, which agrees with our speculation. 
Proving the interval-closed set toggle group structure description for all posets satisfying this relaxed condition would take some work beyond the statement of Theorem~\ref{thm:toggpstructure}; note that no element of our example poset $P$ satisfies condition $(2)$ in Definition~\ref{def:pleasant} (and there is an infinite family of such posets). Thus, this is an example of a family of subsets that is toggle-alternating, since it contains the alternating group, but not inductively toggle-alternating.
\end{remark}

\subsection{Unions of subset families}
\label{sec:multipleposets}
Let $\mathcal{L}=\mathcal{L}_1\cup\mathcal{L}_2$ be the union of two (not necessarily disjoint) combinatorially meaningful sets and consider the toggle group on this union. For example, fix two posets $P$ and $P'$ on the same underlying set. We may consider the toggle group $T(\mathcal{L})$ for $\mathcal{L}=J(P)\cup J(P')$, the union of the two sets of order ideals. For $p\in P$ and $I\in\mathcal{L}$,  the toggle $t_p$ is the symmetric difference of $p$ and $I$, provided this is an order ideal in \emph{either} $P$ or $P'$. As another example, 
it may be interesting to study the toggle group $T(\mathcal{L})$ where $\mathcal{L}$ is the union of order ideals and filters, or the union of order ideals and antichains, or the union of antichains under two different partial orders, etc.



\begin{remark} \rm
\label{remark:bij}
Suppose $|\mathcal{L}_1| = |\mathcal{L}_2|$.  An explicit bijection between $\mathcal{L}_1$ and $\mathcal{L}_2$ could be given by finding an appropriate toggle group element in $T(\mathcal{L}_1\cup \mathcal{L}_2)$ or $T(\mathcal{L})$ for $\mathcal{L}\supset \mathcal{L}_1\cup \mathcal{L}_2$. Such an element will always exist if the toggle group is the full symmetric group, as is often true. This may be a method for finding missing bijections. An example is the open problem of finding an explicit bijection between $n\times n$ alternating sign matrices and totally symmetric self-complementary plane partitions in a $2n\times 2n\times 2n$ box, both of which are known to be counted by $\prod_{j=0}^{n-1}\frac{(3n+1)!}{(n+j)!}$. In~\cite{StrikerPoset}, we exhibited these sets as order ideals in two different partial orders on the same ground set. Thus, if one could describe, for every $n$, a toggle group element on the union of these sets of order ideals that takes one set to the other, this would give a bijection. 
\end{remark}

\subsection{Independent sets of a graph}
\label{sec:indepsets} 

We now move from the realm of posets to that of graphs. An \emph{independent set} in a graph is a subset of the vertices such that no two have an edge between them. Let $\IS(G)$ equal the set of independent sets of a graph $G = (V(G),E(G))$, where $V(G)$ and $E(G)$ are the vertex and edge sets, respectively.  Since independent sets are subsets of the vertices, we take $V(G)$ as our ground set and $\mathcal{L}=\IS(G)\subseteq 2^{V(G)}$.

With independent sets, as with chains and antichains, the elements may always be toggled `out' of an independent set. We have the following toggle relation.
\begin{lemma}
\label{lem:is}
In the independent set toggle group $T(\IS(G))$, $(t_u t_v)^2=1$ if and only if there is no edge between $u$ and $v$ in $G$.
\end{lemma}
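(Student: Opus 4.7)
The plan is to argue separately for the two directions of the biconditional, following the same template used in Lemma~\ref{lem:antitog} for antichains.

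For the ``only if'' direction (edge implies non-commuting), I would exhibit a single witness independent set on which $t_u t_v$ and $t_v t_u$ disagree. The empty set is independent, and if $\{u,v\} \in E(G)$ then neither of $\{u,v\}, \{u\}, \{v\}$ relevant to the computation causes trouble individually: starting from $\emptyset$, $t_v(\emptyset) = \{v\}$, and since $\{u,v\} \notin \IS(G)$, $t_u(\{v\}) = \{v\}$, so $t_u t_v(\emptyset) = \{v\}$. Symmetrically $t_v t_u(\emptyset) = \{u\}$, and these are different, so $(t_u t_v)^2 \neq 1$.

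For the ``if'' direction (no edge implies commuting), I would argue that when $\{u,v\} \notin E(G)$, the independence of $I \cup \{u\}$ depends only on the neighbors of $u$ other than $v$, and vice versa, so the legality of the two toggles is independent. More concretely, for any $I \in \IS(G)$, a short case analysis on whether $u,v \in I$ and whether the toggles act trivially shows $t_u t_v(I) = t_v t_u(I)$. The key observation in every case is: removing or adding $u$ to $I$ does not change whether $I$'s other elements form or prevent an edge with $v$, since $u$ and $v$ themselves are not adjacent; any obstruction to toggling $v$ comes from a vertex $w \neq u$ in $I$ with $\{v,w\} \in E(G)$, and such $w$ is unaffected by the action of $t_u$.

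No step poses a real obstacle; the argument is a direct structural analog of Lemmas~\ref{lem:antitog} and~\ref{lem:chaintog}. If anything, the most delicate point is phrasing the ``no edge implies commuting'' direction cleanly without an exhaustive four-case tabulation, which I would do by observing that the predicate ``$I \cup \{u\} \in \IS(G)$'' can be written as $N(u) \cap I = \emptyset$, where $N(u)$ is the open neighborhood of $u$; since $v \notin N(u)$ when $\{u,v\}$ is not an edge, this predicate has the same truth value on $I$ as on $I \triangle \{v\}$, which is exactly what is needed for $t_u$ and $t_v$ to commute.
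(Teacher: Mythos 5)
Your proposal is correct and follows essentially the same route as the paper: the ``only if'' direction uses the identical witness $t_u t_v(\emptyset)=\{v\}$ versus $t_v t_u(\emptyset)=\{u\}$, and the ``if'' direction is the paper's observation that the presence or absence of $u$ does not affect whether $v$ may belong to an independent set, which you merely make more explicit via the neighborhood condition $N(u)\cap I=\emptyset$. No substantive difference.
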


\begin{proof}
If there is an edge between $u$ and $v$ in $G$, then $t_u t_v(\emptyset) = \{v\}$ whereas $t_v t_u(\emptyset) = \{u\}$. So $t_u$ and $t_v$ do not commute. If there is no edge between $u$ and $v$, then the presence or absence of $u$ in an independent set $I$ does not affect whether $v$ is in $I$. So $t_u$ and $t_v$ commute.
\end{proof}

We have the following structure description as a corollary of Theorems~\ref{thm:times} and~\ref{thm:toggpstructure}.
\begin{corollary}
\label{cor:is}
Let $G$ be a finite graph and $\IS(G)$ the set of independent sets of $G$. If $G$ is connected, $\IS(G)$ is toggle-alternating. If $G$ has components $G_1,G_2,\ldots, G_k$, then $T(\IS(G)) = T(\IS(G_1))\times T(\IS(G_2))\times\cdots\times T(\IS(G_k))$.
\end{corollary}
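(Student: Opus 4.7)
I would proceed in exactly the same two-step pattern used for the antichain corollary. For the disconnected case, if $G = G_1 + G_2 + \cdots + G_k$ then every independent set of $G$ decomposes uniquely as a disjoint union $I_1 \cup I_2 \cup \cdots \cup I_k$ with $I_i \in \IS(G_i)$, and each combination is valid, so $\IS(G) = \IS(G_1) \otimes \IS(G_2) \otimes \cdots \otimes \IS(G_k)$ in the sense of Definition~\ref{def:togdisjoint}. The toggle-disjointness is exactly the statement from Lemma~\ref{lem:is} that $t_u$ and $t_v$ commute whenever $u$ and $v$ lie in different components (since there is no edge between them). Inducting on $k$ and applying Theorem~\ref{thm:times} gives the direct product decomposition.

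For the connected case, I would show that $\mathcal{F} := \{\IS(G) \mid G \text{ connected}\}$ is inductively toggle-alternating, and then invoke Theorem~\ref{thm:toggpstructure}. The essentialization of $\IS(G)$ equals $\IS(G)$ itself for $|V(G)|\geq 2$, since every vertex lies in some singleton independent set and in the empty set. Given a connected $G$ with $|V(G)|\geq 5$, I would choose $e \in V(G)$ to be a non-cut vertex (which always exists---for instance, an endpoint of a longest path---so $G\setminus\{e\}$ is still connected and has at least four vertices). Then $\IS(G)_{\thickbar{e}} = \IS(G\setminus\{e\})$, which is inductively toggle-alternating by the inductive hypothesis. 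The union condition $\IS(G)_{\thickbar{e}}\cup t_e(\IS(G)_{\thickbar{e}}) = \IS(G)$ is immediate: any $J \in \IS(G)$ with $e\in J$ satisfies $J = t_e(J\setminus\{e\})$ with $J\setminus\{e\}\in\IS(G)_{\thickbar{e}}$. For the intersection condition $\IS(G)_{\thickbar{e}}\cap t_e(\IS(G)_{\thickbar{e}})\neq\emptyset$, I note that any neighbor $v$ of $e$ (which exists by connectedness) gives $\{v\}\in \IS(G)_{\thickbar{e}}$ and $t_e(\{v\}) = \{v\}$ since $e\cup\{v\}$ is not independent, so $\{v\}$ lies in the intersection.

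Finally, for the base cases $|V(G)|\leq 4$, I would verify by computer that $T(\IS(G))$ contains $A_{\IS(G)}$ for all connected graphs on at most four vertices (this is a finite check, using the same \verb|SageMath| method cited for the poset base cases in Appendix~\ref{sec:appendix}), which establishes condition $(1)$ of Definition~\ref{def:pleasant}. Together with the inductive step above, this shows $\mathcal{F}$ is inductively toggle-alternating, and Theorem~\ref{thm:toggpstructure} yields that $\IS(G)$ is toggle-alternating for connected $G$.

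The only substantive point of care is the choice of $e$: one must simultaneously guarantee that $G\setminus\{e\}$ is connected (to stay inside $\mathcal{F}$) and that the two set-theoretic conditions in Definition~\ref{def:pleasant}(2b) are met. Both are handled by taking any non-cut vertex, so there is no genuine obstacle beyond executing the finite base-case computation.
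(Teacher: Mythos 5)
Your proposal is correct and follows essentially the same route as the paper: the toggle-disjoint Cartesian product decomposition via Lemma~\ref{lem:is} and Theorem~\ref{thm:times} for the disconnected case, and choosing a non-cut vertex $e$ so that $\IS(G)_{\thickbar{e}}=\IS(G\setminus\{e\})$ satisfies condition~$(2\mathrm{b})$ of Definition~\ref{def:pleasant}, with the $|V(G)|\leq 4$ base cases checked by computer. You actually spell out the union and intersection conditions of Definition~\ref{def:pleasant}$(2\mathrm{b})$ more explicitly than the paper does, which leaves them implicit.
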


\begin{proof}
If $G$ is disconnected with components $G_1,G_2,\ldots, G_k$, an independent set $X$ in $\IS(G)$ consists of the disjoint union of the independent sets $X_i$ in $\IS(G_i)$, $1\leq i\leq k$, where the choice of the $X_i$ is independent. Now since there is no edge between any of $g_1,g_2,\ldots,g_k$ for $g_i\in G_i$,  $1\leq i\leq k$, we have by Lemma~\ref{lem:is}, $t_{g_i}$ and $t_{g_j}$ commute for all $1\leq i,j\leq k$, so that ${\IS(G)}$ is the toggle-disjoint Cartesian product ${\IS(G_1)}\otimes{\IS(G_2)}\otimes\cdots\otimes{\IS(G_k)}$. 
Therefore, by Theorem~\ref{thm:times}, $T(\IS(G)) = T(\IS(G_1))\times T(\IS(G_2))\times\cdots\times T(\IS(G_k))$.

If $G$ is a connected graph, the result follows from Theorem~\ref{thm:toggpstructure} and the fact, proved below, that the set $\{  \IS(G) \  | \  G \mbox{ is a connected graph}\}$ is an inductively toggle-alternating family. 

For given a connected graph $G$, one may choose any vertex that when deleted does not disconnect the graph  
 to be the required $e$ in condition $(2\mathrm{b})$ of Definition~\ref{def:pleasant}. 

Finally, we have checked, using \verb|SageMath|~\cite{sage} (see Appendix~\ref{sec:appendix}), that for all connected graphs with $|G|\leq 4$, $T(\IS(P))=\mathfrak{S}_{\IS(P)}$ or ${A}_{\IS(P)}$, thus condition $(1)$ of Definition~\ref{def:pleasant} is satisfied. So this is an inductively toggle-alternating family, and the result follows by Theorem~\ref{thm:toggpstructure}.
\end{proof}
 
\begin{remark} \rm
\label{remark:indep}
Toggling independent sets is related to Monte-Carlo simulations and Glauber dynamics of the hard-core model of statistical physics~\cite[Chapter 3]{markovmixing}.
Also, recently, D.~Einstein, M.~Farber,  E.~Gunawan,  M.~Joseph,  M.~Macauley,  J.~Propp, and S.~Rubinstein-Salzedo have used this perspective of generalized toggling to prove some results on toggling noncrossing matchings and, more generally, independent sets of certain graphs~\cite{EFGJMPRS}. Finally, M.~Joseph and T.~Roby have recently found orders of some toggle group elements and other related results in the toggle group of independent sets of a path graph~\cite{MJoseph}.
\end{remark}

\subsection{Vertex covers of a graph}
\label{sec:vertexcovers}
 A \emph{vertex cover} of a graph $G$ is a subset of the vertices $V(G)$ such that every edge in $E(G)$ is incident to at least one vertex in the vertex cover. Denote the set of vertex covers of $G$ as $\mathcal{VC}(G)$. Since vertex covers are subsets of the vertices, we take $V(G)$ as our ground set and $\mathcal{L}=\mathcal{VC}(G)\subseteq 2^{V(G)}$. In the vertex cover toggle group $T(\mathcal{VC}(G))$, you may always toggle a vertex `in' to a vertex cover, but you may not always toggle a vertex `out'. 

\smallskip
We have the following toggle commutation lemma.
\begin{lemma}
\label{lem:vc}
Let $u$ and $v$ be vertices of $G$. In the vertex cover toggle group $T(\mathcal{VC}(G))$, $(t_u t_v)^2=1$ if and only if there is no edge between $u$ and $v$.
\end{lemma}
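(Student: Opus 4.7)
The plan is to follow the same two-direction argument as in the independent set case (Lemma~\ref{lem:is}), but accounting for the asymmetry that a vertex may always be toggled \emph{into} a vertex cover, while toggling it \emph{out} requires every incident edge to retain a covered endpoint.

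For the forward direction, suppose $\{u,v\}\in E(G)$. I would use the full vertex set $V(G)$ as a witness: $V(G)$ is trivially a vertex cover, and removing either $u$ or $v$ alone still leaves a vertex cover (every edge incident to the removed vertex still has its other endpoint present). Thus $t_u(V(G))=V(G)\setminus\{u\}$ and $t_v(V(G))=V(G)\setminus\{v\}$. However, $V(G)\setminus\{u,v\}$ fails to cover the edge $\{u,v\}$ and is therefore not a vertex cover, so neither $t_u$ nor $t_v$ can further remove the other vertex. Hence $t_u t_v(V(G))=V(G)\setminus\{v\}\neq V(G)\setminus\{u\}=t_v t_u(V(G))$, establishing noncommutativity.

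For the reverse direction, suppose there is no edge between $u$ and $v$. Given any vertex cover $C$, the question of whether $t_u$ modifies $C$ reduces to: (i) if $u\notin C$, can we add it? Yes, always, since supersets of vertex covers are vertex covers. (ii) If $u\in C$, can we remove it? Precisely when every edge incident to $u$ has its other endpoint in $C\setminus\{u\}$. Because $u$ and $v$ are nonadjacent, the set of edges incident to $u$ is disjoint from $v$, so the toggleability condition for $u$ depends only on $C\cap (V(G)\setminus\{u,v\})$ and not on the status of $v$. Symmetrically for $v$. A short case analysis on the four possible membership patterns of $\{u,v\}$ in $C$ then shows $t_u t_v(C)=t_v t_u(C)$ in every case.

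The main (minor) obstacle is just ensuring that the asymmetry mentioned above does not cause a subtle interaction in the commuting case; the key point is that adding a vertex never disrupts the covering condition at any \emph{other} vertex, and removing a vertex only affects coverage of its own incident edges. Once this is isolated, the lemma drops out in essentially the same form as the independent set analogue.
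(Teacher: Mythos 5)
Your proposal is correct and follows the paper's own proof essentially verbatim: the noncommuting direction uses the same witness $V(G)$ with the same computation $t_ut_v(V(G))=V(G)\setminus\{v\}\neq V(G)\setminus\{u\}=t_vt_u(V(G))$, and the commuting direction is the same observation that for nonadjacent $u,v$ the toggleability of each vertex is unaffected by the other's membership. The extra care you take with the add/remove asymmetry is a fair elaboration of the paper's one-line justification, not a different argument.
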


\begin{proof}
Suppose there is an edge between $u$ and $v$. Then consider the vertex cover $X$ consisting of all the vertices in $V(G)$. We have that $t_ut_v(X)=X\setminus\{v\}$ while $t_vt_u(X)=X\setminus\{u\}$. So $t_u$ and $t_v$ do not commute.

If $u$ and $v$ do not have an edge between them, then the presence or absence of $u$ in a vertex cover has no affect on whether $v$ must be in the vertex cover. So $t_u$ and $t_v$ commute.
\end{proof}

We have the following structure description. 
The proof is similar to the proof of Corollary~\ref{cor:is}, since the commutativity lemma is the same; the main difference is that condition $(2\mathrm{a})$ of Definition~\ref{def:pleasant} is used rather than condition $(2\mathrm{b})$. We have checked the base cases $|G|\leq 4$ using \verb|SageMath|~\cite{sage} (see Appendix~\ref{sec:appendix}). 

\begin{corollary}
Let $G$ be a finite graph and $\mathcal{VC}(G)$ the set of vertex covers of $G$. If $G$ is connected, $T(\mathcal{VC}(G))$ is either the symmetric group or the alternating group on $\mathcal{VC}(G)$. If $G$ has components $G_1,G_2,\ldots, G_k$, then $T(\mathcal{VC}(G)) = T(\mathcal{VC}(G_1))\times T(\mathcal{VC}(G_2))\times\cdots\times T(\mathcal{VC}(G_k))$.
\end{corollary}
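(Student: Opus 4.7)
The plan follows the template of Corollary~\ref{cor:is}, with the only essential modification that I use condition $(2\mathrm{a})$ of Definition~\ref{def:pleasant} in place of $(2\mathrm{b})$. This swap is forced by the asymmetry of vertex covers noted just before the statement: a vertex can always be toggled \emph{into} a cover but not always \emph{out}, so it is the vertex covers containing a distinguished element $e$, rather than those avoiding $e$, that match up cleanly with vertex covers of a smaller graph.

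For the disconnected case, suppose $G$ has components $G_1,\ldots,G_k$. Since each edge of $G$ lies inside a single component, a subset of $V(G)$ is a vertex cover of $G$ if and only if its intersection with each $V(G_i)$ is a vertex cover of $G_i$, and these choices are independent; hence $\mathcal{VC}(G) = \mathcal{VC}(G_1)\otimes\cdots\otimes \mathcal{VC}(G_k)$. By Lemma~\ref{lem:vc}, toggles at vertices in distinct components commute, so this is a toggle-disjoint Cartesian product, and Theorem~\ref{thm:times} yields the claimed direct product.

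For the connected case, I would show that $\{\mathcal{VC}(G) \mid G \text{ is connected}\}$ is an inductively toggle-alternating family and then invoke Theorem~\ref{thm:toggpstructure}. Given a connected $G$ with $|V(G)|\geq 5$, pick any non-cut vertex $e$ (for instance, a leaf of a spanning tree), so that $G\setminus e$ is connected. The map $X \mapsto X\cup\{e\}$ is a bijection from $\mathcal{VC}(G\setminus e)$ onto $\mathcal{VC}(G)_e$, because an edge of $G\setminus e$ is covered by $X$ precisely when it is covered by $X\cup\{e\}$, and every edge of $G$ incident to $e$ is automatically covered once $e$ is present. Since $e$ lies in every member of $\mathcal{VC}(G)_e$, essentialization removes it and $\mathcal{VC}(G)_e$ is isomorphic to $\mathcal{VC}(G\setminus e)$, which is inductively toggle-alternating by the inductive hypothesis.

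To complete condition $(2\mathrm{a})$, I would verify $\mathcal{VC}(G)_e \cup t_e(\mathcal{VC}(G)_e) = \mathcal{VC}(G)$, which holds because any $Y\in\mathcal{VC}(G)$ with $e\notin Y$ satisfies $Y\cup\{e\}\in\mathcal{VC}(G)_e$ and $t_e(Y\cup\{e\})=Y$. For the nonempty intersection I need a vertex cover containing $e$ that is fixed by $t_e$, i.e., one in which $e$ is essential: taking any neighbor $x$ of $e$ (which exists by connectedness), the set $Y := V(G)\setminus\{x\}$ contains $e$ and is a vertex cover, while $Y\setminus\{e\}$ leaves the edge $\{e,x\}$ uncovered, so $t_e(Y)=Y$ lies in $\mathcal{VC}(G)_e\cap t_e(\mathcal{VC}(G)_e)$. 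The base cases $|V(G)|\leq 4$ are to be checked computationally via \texttt{SageMath}, as elsewhere in this section. I expect the subtlest step to be this construction of a genuine fixed point of $t_e$: it is the one place that forces a concrete combinatorial choice and is the feature that distinguishes the vertex cover argument from its independent set counterpart.
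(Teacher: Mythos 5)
Your proposal is correct and follows essentially the same route as the paper, which states only that the argument mirrors Corollary~\ref{cor:is} with condition $(2\mathrm{a})$ of Definition~\ref{def:pleasant} replacing $(2\mathrm{b})$ and the base cases $|G|\leq 4$ checked in \texttt{SageMath}. You have simply filled in the details the paper leaves implicit, and your key verifications --- the bijection $X\mapsto X\cup\{e\}$ between $\mathcal{VC}(G\setminus e)$ and $\mathcal{VC}(G)_e$ for a non-cut vertex $e$, and the fixed point $V(G)\setminus\{x\}$ witnessing $\mathcal{VC}(G)_e\cap t_e(\mathcal{VC}(G)_e)\neq\emptyset$ --- are all sound.
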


\begin{remark} \rm
\label{remark:vc}
Vertex covers have been used in commutative algebra work of B.~Kubik, C.~Paulsen, and S.~Sather-Wagstaff to index the decomposition of edge ideals~\cite{paulsen} and path ideals~\cite{kubik}. In this work, minimal vertex covers are of special importance. We can characterize a minimal vertex cover $X$ as a vertex cover which is invariant under any of the toggles $t_v$ for $v\in X$. It would be interesting to see what algebraic implications the action of the toggle group on vertex covers may have. 
\end{remark}

\subsection{Acyclic subgraphs of a graph} 
\label{sec:acyclic}
We consider an \emph{acyclic subgraph} of a graph $G$ to be a collection of edges in $E(G)$ in which there is no cycle. Denote the set of acyclic subgraphs of $G$ as $\mathcal{AS}(G)$. Since acyclic subgraphs are subsets of the edges rather than the vertices, we take $E(G)$ as our ground set and $\mathcal{L}=\mathcal{AS}(G)\subseteq 2^{E(G)}$.

The maximal acyclic subgraphs are the spanning forests. In the acyclic subgraph toggle group $T(\mathcal{AS}(G))$, you may always toggle an edge `out', but you may not always toggle an edge `in', since adding an edge may introduce a cycle.

We have the following commutation lemma.
\begin{lemma}
\label{lem:as}
Let $e$ and $f$ be edges of $G$. In the acyclic subgraph toggle group $T(\mathcal{AS}(G))$, $(t_e t_f)^2=1$ if and only if no cycle of $G$ contains both $e$ and $f$.
\end{lemma}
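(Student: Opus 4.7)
The plan is to follow the template established by the earlier commutation lemmas (Lemmas~\ref{lem:oicommute},~\ref{lem:chaintog},~\ref{lem:antitog},~\ref{lem:is},~\ref{lem:vc}): prove the forward direction contrapositively by exhibiting a single acyclic subgraph on which the two toggles disagree, and prove the reverse direction by a short case analysis on whether $e$ and $f$ belong to $X$.

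For the forward direction, suppose $C$ is a cycle of $G$ containing both $e$ and $f$. Set $X := E(C) \setminus \{e,f\}$; removing two edges from a cycle leaves a path or a disjoint union of two paths, so $X$ is acyclic. Neither $X \cup \{e\}$ nor $X \cup \{f\}$ contains a cycle, since the cycle $C$ requires both edges, so $t_e(X) = X\cup\{e\}$ and $t_f(X) = X\cup\{f\}$. On the other hand $X\cup\{e,f\} = E(C)$ does contain a cycle, so neither edge can be toggled in once the other one is present. Tracing through, $t_e t_f(X) = X\cup\{f\} \neq X\cup\{e\} = t_f t_e(X)$, hence $(t_e t_f)^2 \neq 1$.

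For the reverse direction, assume no cycle of $G$ contains both $e$ and $f$. The key auxiliary observation (an instance of the circuit-exchange phenomenon for graphic matroids) is the following: for any acyclic $X \subseteq E(G)$ with $e,f \notin X$, the set $X \cup \{e,f\}$ is acyclic if and only if both $X \cup \{e\}$ and $X \cup \{f\}$ are acyclic. The forward implication is immediate; for the reverse, any cycle in $X \cup \{e,f\}$ must use at least one of $e,f$ (as $X$ is acyclic) and cannot use both (by hypothesis), so it would already lie in $X\cup\{e\}$ or in $X\cup\{f\}$. I will then verify $t_e t_f(X) = t_f t_e(X)$ in the four cases according to whether $e,f$ are in $X$. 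The interesting cases are when both are outside $X$ (where the auxiliary observation directly forces both compositions to agree) and when exactly one is inside (where one argues that removing the edge in $X$ cannot destroy any cycle that blocks toggling the other edge in, precisely because such a cycle avoids the removed edge by hypothesis).

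The only mildly delicate step is the mixed case ($e \in X$, $f \notin X$, symmetric for the reverse): one must check that the presence or absence of $f$ in the toggled output is independent of whether $e$ was removed first. This reduces to saying that if $X\cup\{f\}$ contains a cycle $C'$, then $C'$ avoids $e$ (again by the no-common-cycle hypothesis), so $C'\subseteq (X\setminus\{e\})\cup\{f\}$ as well. No genuine obstacle arises; the main work is just packaging the case analysis cleanly, which parallels the pattern of Lemmas~\ref{lem:chaintog} and~\ref{lem:is}.
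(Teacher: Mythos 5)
Your proposal is correct and follows essentially the same route as the paper: the forward direction exhibits an acyclic subgraph built from a common cycle $C$ on which the two compositions disagree (the paper uses $C\setminus e$ where you use $E(C)\setminus\{e,f\}$, an equally valid witness), and the reverse direction formalizes the paper's one-line observation that, absent a common cycle, the presence of $e$ cannot affect whether $f$ may be added while preserving acyclicity. Your case analysis and the circuit-avoidance argument in the mixed case are sound and simply make explicit what the paper leaves implicit.
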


\begin{proof}
Suppose a cycle $C$ of $G$ contains edges $e$ and $f$; consider the acyclic subgraph $C\setminus e$. We know $t_e t_f(C\setminus e)= C\setminus f$ since both toggles act nontrivially whereas $t_f t_e(C\setminus e)= C\setminus \{e,f\}$ since here $t_e$ must act as the identity (or else the resulting subgraph would be a cycle). So $t_e$ and $t_f$ do not commute. 

If no cycle contains both $e$ and $f$, then the presence or absence of $e$ in a subgraph does not affect whether $f$ may be in the subgraph while maintaining acyclicity, and vice versa. So in this case $t_e$ and $t_f$ commute.
\end{proof}

We have the following partial structure description as a corollary of Theorem~\ref{thm:times}. Note a \emph{cut-vertex} of a graph is a vertex that disconnects the graph when removed from the graph.
\begin{corollary}
\label{cor:as}
Let $G$ be a finite graph and $\mathcal{AS}(G)$ the set of acyclic subgraphs of $G$. 
Suppose $G$ has an edge $e$ not contained in any cycle of $G$. Then $T(\mathcal{AS}(G)) =  T(\mathcal{AS}(G\setminus \{e\}))\times T(\mathcal{AS}(\{e\})) = T(\mathcal{AS}(G\setminus \{e\}))\times \mathfrak{S}_2$. 
Let $G_1,G_2,\ldots, G_k$ be subgraphs of $G$ with pairwise disjoint edge sets $E_1,E_2,\ldots,E_k$ such that $E=\cup_{i=1}^k E_i$ and such that any pair of subgraphs $G_i$ and $G_j$ have at most one vertex in common, and this vertex is a cut-vertex of $G$. Then $T(\mathcal{AS}(G)) = T(\mathcal{AS}(G_1))\times T(\mathcal{AS}(G_2))\times\cdots\times T(\mathcal{AS}(G_k))$. 
\end{corollary}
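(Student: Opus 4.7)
The plan is to obtain both statements directly from Theorem~\ref{thm:times} by showing that $\mathcal{AS}(G)$ is a toggle-disjoint Cartesian product of the pieces in question; although Lemma~\ref{lem:as} already guarantees that the relevant toggles commute, the full direct-product conclusion requires the structural identity on subset families. For the first statement, if $e$ lies in no cycle of $G$, then a subset $X\subseteq E(G)$ is acyclic in $G$ if and only if $X\setminus\{e\}$ is acyclic in $G\setminus\{e\}$, since adding or removing $e$ cannot create or destroy a cycle. Hence every $X\in\mathcal{AS}(G)$ decomposes uniquely as $X'\cup Y$ with $X'\in\mathcal{AS}(G\setminus\{e\})$ and $Y\in\mathcal{AS}(\{e\})=\{\emptyset,\{e\}\}$, whose ground sets $E(G)\setminus\{e\}$ and $\{e\}$ are disjoint. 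Thus $\mathcal{AS}(G)=\mathcal{AS}(G\setminus\{e\})\otimes\mathcal{AS}(\{e\})$, and Theorem~\ref{thm:times} gives the stated product, with $T(\mathcal{AS}(\{e\}))=\mathfrak{S}_2$ since $|\mathcal{AS}(\{e\})|=2$.

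For the second statement, the target is the identity
\[
\mathcal{AS}(G)=\mathcal{AS}(G_1)\otimes\mathcal{AS}(G_2)\otimes\cdots\otimes\mathcal{AS}(G_k),
\]
from which iterated application of Theorem~\ref{thm:times} concludes. The edge sets $E_1,\dots,E_k$ are disjoint by hypothesis, so the ground sets remain disjoint after essentialization. Restricting any acyclic subgraph of $G$ to each $G_i$ evidently yields an element of $\mathcal{AS}(G_i)$, so the crux is the reverse direction: the union $\bigcup_i X_i$ of acyclic $X_i\in\mathcal{AS}(G_i)$ is acyclic in $G$. This reduces to the key claim that every cycle of $G$ lies entirely within a single $G_i$; granted that, any cycle in $\bigcup_i X_i$ would already be a cycle within some one $G_i$, contradicting $X_i\in\mathcal{AS}(G_i)$.

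The main obstacle is precisely this cycle-confinement claim, which I would prove by a transition argument. If a cycle $C$ of $G$ contained edges from both $G_i$ and $G_j$, then walking once around $C$ we encounter a positive even number of transitions between $G_i$-edges and $G_j$-edges; each transition occurs at a vertex of $V(G_i)\cap V(G_j)$, forcing $|V(G_i)\cap V(G_j)|\geq 2$ and contradicting the hypothesis that they share at most one (cut-)vertex. Equivalently, one may invoke the block-cut-vertex decomposition of $G$: every cycle lies in a single block of $G$, and the cut-vertex hypothesis forces each block of $G$ to lie in a single $G_i$ (two distinct $G_i,G_j$ meeting inside a common block would share two vertices on a cycle running through them). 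Either formulation closes the argument and, via Theorem~\ref{thm:times}, yields the claimed direct product decomposition.
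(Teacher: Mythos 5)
Your overall strategy is the same as the paper's: exhibit $\mathcal{AS}(G)$ as a toggle-disjoint Cartesian product of the relevant pieces and invoke Theorem~\ref{thm:times}. Your treatment of the first assertion (an edge in no cycle splits off an $\mathfrak{S}_2$ factor) is correct, and in fact more careful than the paper's, which passes directly from ``$t_e$ commutes with everything'' to the product decomposition; verifying the identity $\mathcal{AS}(G)=\mathcal{AS}(G\setminus\{e\})\otimes\mathcal{AS}(\{e\})$ is what Theorem~\ref{thm:times} actually requires.

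The gap is in the cycle-confinement claim for the second assertion. Your transition argument counts transitions ``between $G_i$-edges and $G_j$-edges,'' but if a cycle meets three or more of the parts, consecutive edges along it need not switch between the same pair: the cycle can pass from $E_i$ to $E_l$ to $E_j$, so each pairwise intersection $V(G_a)\cap V(G_b)$ is hit at most once and no single pair is forced to share two vertices. The block-decomposition variant has the same hole: ``each block lies in a single $G_i$'' is exactly the claim at issue, and your parenthetical justification again tacitly assumes only two parts meet the block. Indeed the claim fails under the literal hypotheses: let $G$ be a triangle on vertices $1,2,3$ with a pendant edge attached at each vertex, and let each $E_i$ be a single edge ($k=6$). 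Every pair of these subgraphs shares at most one vertex, and each shared vertex is one of $1,2,3$, all cut-vertices of $G$; yet the triangle is a cycle meeting three parts, the union of its three edges (each acyclic in its own part) is not acyclic, and by Lemma~\ref{lem:as} the three corresponding toggles pairwise fail to commute, so $T(\mathcal{AS}(G))$ cannot be the abelian group $(\mathfrak{S}_2)^{6}$ that the decomposition would give. To be fair, the paper's own proof asserts ``no edges from different components can be in the same cycle'' with no argument at all, so you have correctly isolated the crux; but the step cannot be repaired without strengthening the hypothesis, e.g.\ requiring each $G_i$ to be a union of blocks of $G$ (as in the block--cut-vertex decomposition), in which case every cycle does lie in a single $G_i$ and your argument goes through.
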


\begin{proof}
Suppose $G_1,G_2,\ldots, G_k$ are subgraphs of $G$ 
with edge sets $E_1,E_2,\ldots,E_k$ satisfying the hypotheses.
An acyclic subgraph $X$ in $\mathcal{AS}(G)$ consists of the disjoint union of the acyclic subgraphs $X_i\subseteq E_i$ in $\mathcal{AS}(G_i)$, $1\leq i\leq k$, where the choice of the $X_i$ is independent. Now since no edges from different components can be in the same cycle, we have by Lemma~\ref{lem:as}, $t_{e_i}$ and $t_{e_j}$ commute for all $e_i\in E_i$, $e_j\in E_j$, $1\leq i,j\leq k$ so that ${\mathcal{AS}(G)}$ is the toggle-disjoint Cartesian product ${\IS(G_1)}\otimes{\IS(G_2)}\otimes\cdots\otimes{\IS(G_k)}$. 
Therefore, by Theorem~\ref{thm:times}, $T(\IS(G)) = T(\IS(G_1))\times T(\IS(G_2))\times\cdots\times T(\IS(G_k))$.

Suppose $G$ has an edge $e$ not contained in any cycle of $G$. Then by Lemma~\ref{lem:as}, $t_e$ commutes with all other toggles in $T(\mathcal{AS}(G))$. So $T(\mathcal{AS}(G)) =  T(\mathcal{AS}(G\setminus \{e\}))\times T(\mathcal{AS}(\{e\})) = T(\mathcal{AS}(G\setminus \{e\}))\times \mathfrak{S}_2$.
\end{proof}

\begin{remark} \rm
Note  $\{  \mathcal{AS}(G) \  | \  G$  is a connected graph in which each edge is contained in at least one cycle$\}$ is not an inductively toggle-alternating family. Take, for example, the cycle $C_n$. Label the edges with $[n]=\{1,2,\ldots,n\}$. $\mathcal{AS}(C_n)=2^{[n]}\setminus \{[n]\}$, since the only cycle is $C_n$ itself.
But if we remove any edge, the graph becomes a path. Since there are no cycles in a path, all toggles commute, so the toggle group of a path with $n$ edges is $(\mathfrak{S}_2)^n$. So there is no edge satisfying Condition $(2)$ of Definition~\ref{def:pleasant}. 
\end{remark}

\subsection{Spanning subgraphs of a graph}
\label{sec:conn}
A \emph{spanning subgraph} of a graph $G$ is a subset $S\subseteq E(G)$ of the edges of $G$
such that the subgraph $(V(G), S)$ consisting of all the vertices of $G$ and the edges
in $S$ has the same number of connected components as the original graph $G$.
We take as our ground set the set of edges $E(G)$, and our subsets $\mathcal{L}=\mathcal{S}(G)\subseteq 2^{E(G)}$ to be the edge sets of spanning subgraphs. We define the spanning subgraph toggle group as the subgroup of the symmetric group on all spanning subgraphs generated by the edge toggles.

The minimal spanning subgraphs are the {spanning forests}. In the spanning subgraph toggle group $T(\mathcal{S}(G))$, you may always toggle an edge `in', but you may not always toggle an edge `out', since removing an edge may increase the number of components. These observations indicate the spanning subgraph toggle group is in some sense `dual' to the acyclic subgraph toggle group; we will return to this in the next subsection.

We give below a commutation lemma, in which we use the following standard definition.
\begin{definition}
 A \emph{cut} of a graph is a partition of the vertices into two disjoint subsets. A cut determines a \emph{cut-set} (also called a \emph{co-circuit}), defined as the set of edges that have one endpoint in each subset of the partition.
\end{definition} 
 
\begin{lemma}
\label{lem:ss}
Let $e$ and $f$ be edges of $G$. In the spanning subgraph toggle group $T(\mathcal{S}(G))$, $(t_e t_f)^2=1$ if and only if no cut-set  of $G$ contains both $e$ and $f$.
\end{lemma}

\begin{proof}
Suppose a cut-set $C$ of $G$ contains edges $e$ and $f$; consider the spanning subgraph $(E(G)\setminus C) \cup \{f\}$. We know $t_f t_e((E(G)\setminus C) \cup \{f\})= (E(G)\setminus C) \cup \{e\}$ since both toggles act nontrivially whereas $t_e t_f((E(G)\setminus C) \cup \{f\})= (E(G)\setminus C) \cup \{e,f\})$ since here $t_f$ must act as the identity (or else the resulting subgraph would no longer be spanning). So $t_e$ and $t_f$ do not commute. 

If no cut-set contains both $e$ and $f$, then the presence or absence of $e$ in a subgraph does not affect whether $f$ may be in the subgraph while still spanning, and vice versa. So in this case $t_e$ and $t_f$ commute.
\end{proof}

\subsection{Matroids}
\label{sec:matroids} 
A matroid is an abstraction of the notion of linear independence. 
Matroids can be defined in many ways; we give two definitions here and characterize part of each definition in terms of toggles. 

\begin{definition}
A finite \emph{matroid} $M$ may be defined as a pair $(E,\mathcal{I})$, where $E$ is a finite set (called the ground set) and $\mathcal{I}$ is a set of subsets of $E$ (called the independent sets) with the following properties:

\begin{enumerate}
\item
$\emptyset\in\mathcal{I}$;
\item
For each $X\subseteq Y\subseteq E$, if $Y\in\mathcal{I}$ then $X\in\mathcal{I}$ (the \emph{hereditary} property); and
\item
If $X,Y\in \mathcal{I}$ and $|Y|>|X|$, then there exists $y\in Y\setminus X$ such that $X\cup \{y\}\in\mathcal{I}$ (the \emph{independent set exchange} property).
\end{enumerate}

A maximal independent subset is a \emph{basis}. A subset of $E$ is \emph{dependent} if it is not independent (that is, not in $\mathcal{I}$). A minimal dependent subset of $E$ is a \emph{circuit}. 
\end{definition}

Since the set $\mathcal{I}$ of independent sets is a set of subsets of $E$, we may define and study the toggle group $T(\mathcal{I})$. For any $X\in \mathcal{I}$ and any $x\in X$, $t_x(X)=X\setminus \{x\}$. That is, we may always toggle elements `out' of an independent set in the toggle group of independent sets of a matroid.

\begin{remark} \rm
\label{remark:matroid1}
We can characterize the independent set exchange condition above using the toggle group $T(\mathcal{I})$. Namely, (3) is equivalent to the following:
\begin{enumerate}
\item[3*.]
If $X,Y\in \mathcal{I}$ and $|Y|>|X|$, then there exists $y\in Y\setminus X$ such that $t_y(X)\neq X$ (that is, $t_y$ acts nontrivially on~$X$ in $T(\mathcal{I})$).
\end{enumerate}
\end{remark}

We note the toggle groups considered in the previous two subsections are examples of toggling independent sets in matroids, namely, graphic and co-graphic matroids. We give here a general commutation lemma for matroid toggle groups that specializes to Lemmas~\ref{lem:as} and \ref{lem:ss}.

\begin{lemma}
Let $M=(E,\mathcal{I})$ be a matroid and $x,y\in E$. In the matroid independent set toggle group $T(\mathcal{I})$, $(t_x t_y)^2=1$ if and only if no circuit of $M$ contains both $x$ and $y$.
\end{lemma}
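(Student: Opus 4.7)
The plan is to prove both implications directly, with the nontrivial content concentrated in a single auxiliary observation.

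For the forward direction, suppose no circuit of $M$ contains both $x$ and $y$, and fix $X\in\mathcal{I}$. I would verify $t_x t_y(X)=t_y t_x(X)$ by splitting into cases according to whether $x,y\in X$. When $x,y\in X$, both compositions simply delete $x$ and $y$ by the hereditary property; when $x,y\notin X$ or exactly one of them is in $X$, one compares whether each intermediate union lies in $\mathcal{I}$. The crux in every mixed subcase is the auxiliary fact \emph{if $X, X\cup\{x\}, X\cup\{y\}\in\mathcal{I}$ but $X\cup\{x,y\}\notin\mathcal{I}$, then some circuit of $M$ contains both $x$ and $y$}. Indeed, a minimal dependent subset $C\subseteq X\cup\{x,y\}$ must use both $x$ and $y$; otherwise $C$ would lie inside $X\cup\{x\}$ or $X\cup\{y\}$, each independent. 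Under our no-common-circuit hypothesis this configuration cannot occur, so each subcase collapses and the two orders agree.

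For the converse, I would take any circuit $C$ containing both $x$ and $y$ and choose $X:=C\setminus\{x,y\}$ as a witness. Minimality of $C$ gives that $X\cup\{x\}=C\setminus\{y\}$ and $X\cup\{y\}=C\setminus\{x\}$ are independent, while $X\cup\{x,y\}=C$ is not. A one-line computation then yields $t_x t_y(X)=X\cup\{y\}$ and $t_y t_x(X)=X\cup\{x\}$, so $t_x$ and $t_y$ fail to commute on $X$.

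The proof is essentially careful bookkeeping over four membership cases; I do not anticipate any substantive obstacle. The argument parallels, and simultaneously unifies, the commutation lemmas for acyclic subgraphs (Lemma~\ref{lem:as}) and spanning subgraphs (Lemma~\ref{lem:ss}), which are the special cases of graphic and co-graphic matroids. The only matroidal input beyond the hereditary property is the key observation above, which is a mild instance of the ``circuit elimination'' style of reasoning and which is invisible in the graph-theoretic specializations.
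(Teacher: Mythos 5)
Your proof is correct and follows essentially the same route as the paper: the converse uses the same circuit-based witness (you start from $C\setminus\{x,y\}$ where the paper starts from $C\setminus\{x\}$, an immaterial difference), and your forward direction makes precise, via the explicit observation that a minimal dependent subset of $X\cup\{x,y\}$ must contain both $x$ and $y$, what the paper asserts informally in one sentence. No gaps.
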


\begin{proof} 
Suppose a circuit $C$ of $M$ contains both $x$ and $y$. Since $C$ is a minimal dependent set, $C\setminus\{x\},C\setminus \{y\}$, and $C\setminus \{x,y\}$ are all in $\mathcal{I}$. We know $t_x t_y(C\setminus \{x\})= C\setminus \{y\}$ since both toggles act nontrivially, whereas $t_y t_x(C\setminus \{x\})= t_y (C\setminus \{x\})= C\setminus \{x,y\}$ since here $t_x$ must act as the identity (or else the resulting subset would no longer be independent). So $t_x$ and $t_y$ do not commute. 

If no circuit contains both $x$ and $y$, then the presence or absence of $x$ in a independent set does not affect whether $y$ may be in the independent set, and vice versa. So in this case $t_x$ and $t_y$ commute.
\end{proof}

\subsection{Convex geometries}
\label{sec:antimatroids}
Just as matroids are an abstraction of the notion of linear independence, convex geometries are an abstraction of the notion of convexity. Convex geometries are dual to \emph{antimatroids} and also equivalent to \emph{meet-distributive lattices}; see~\cite{EdelmanJamison}.  As with matroids, we first state the definition and then characterize part of the definition in terms of toggles. We will also refer to Definition~\ref{def:cxgeom} in Section~\ref{sec:row}.
\begin{definition}
\label{def:cxgeom}
Let $E$ be a finite nonempty set. A set $\mathcal{L}$ of subsets of $E$ is a \emph{convex geometry}  on $E$ if $\mathcal{L}$ satisfies the following properties.
\begin{enumerate}
\item $\emptyset\in\mathcal{L}$ and $E\in\mathcal{L}$;
\item If $X,Y\in\mathcal{L}$, then $X\cap Y\in\mathcal{L}$; and
\item If $X\in\mathcal{L}\setminus\{E\}$, then there exists $e\in E\setminus X$ such that $X\cup\{e\}\in\mathcal{L}$.
\end{enumerate}
The elements of $\mathcal{L}$ are called the \emph{convex sets}.
\end{definition}

\begin{remark} \rm
\label{remark:cxgeom}
We can recast condition $(3)$ above in terms of toggles:
\begin{enumerate}
\item[3*.]
For any $X\in\mathcal{L}\setminus\{E\}$, there exists a toggle $t_e$ with $e\in E\setminus X$ such that $t_e(X)\neq X$.
\end{enumerate}
\end{remark}

\begin{example} \rm
\label{ex:notpleasant}
Let $E=\left\{1,2,3\right\}$ and 
$\mathcal{L}=\left\{\emptyset, \{1\}, \{2\}, \{3\}, \{1,2\}, \{2,3\}, \{1,2,3\}\right\}$. $\mathcal{L}$ is a convex geometry, but note that $\mathcal{L}$ is not closed under set union since $\{1,3\}\notin\mathcal{L}$. So, in particular, $\mathcal{P}_{\mathcal{L}}$ is not a distributive lattice, so there is no poset $P$ such that $\mathcal{L}=J(P)$.
\end{example}

Convex geometries have a closure operator, defined below. We study more general closure operators in the next section.
\begin{definition}
For $A\subseteq E$ and $\mathcal{L}\subseteq 2^E$ a convex geometry, define the \emph{closure operator} of $\mathcal{L}$, $\tau_{\mathcal{L}}:2^E\rightarrow\mathcal{L}$, as
\[\tau_{\mathcal{L}}(A)=\bigcap\left\{X\in\mathcal{L} \ | \ A\subseteq X\right\}.\]
\end{definition}

Several of the previous examples, including order ideals, are also convex geometries. 
In particular, we show below that the set of interval-closed sets of a poset (see Section~\ref{sec:ic}) is a convex geometry. 

\begin{proposition}
\label{prop:ic_convgeom}
Let $P$ be a finite poset and $\I(P)$ the set of interval-closed sets of $P$. 
$\I(P)$ is a convex geometry.
\end{proposition}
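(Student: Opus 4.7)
The plan is to verify the three defining conditions of a convex geometry from Definition~\ref{def:cxgeom} for $\mathcal{L} = \I(P)$ with ground set $E = P$. Conditions (1) and (2) should be essentially immediate, while condition (3) is the substantive point.

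For (1), the empty set is interval-closed vacuously and $P$ is interval-closed because every element of $P$ lies in $P$. For (2), suppose $X, Y \in \I(P)$ and take $x, y \in X \cap Y$ with $x \leq z \leq y$ for some $z \in P$; then $z \in X$ since $X$ is interval-closed and $z \in Y$ since $Y$ is interval-closed, so $z \in X \cap Y$.

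The heart of the argument is (3). My plan is to establish the standard structural lemma that a subset $X \subseteq P$ is interval-closed if and only if $X = I \setminus J$ for some order ideals $J \subseteq I$ of $P$. The ``if'' direction is a short check: if $x \leq z \leq y$ with $x, y \in I \setminus J$, then $z \in I$ because $I$ is an order ideal, and $z \notin J$ because otherwise $x \leq z$ would force $x \in J$. For the ``only if'' direction, set $I = \{p \in P : p \leq x \text{ for some } x \in X\}$ (the order ideal generated by $X$) and $J = I \setminus X$, and verify that $J$ is an order ideal using the interval-closed property of $X$. Given this lemma, fix $X \in \I(P) \setminus \{P\}$ and write $X = I \setminus J$. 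If $I \neq P$, choose $e$ a minimal element of $P \setminus I$; then $I \cup \{e\}$ is an order ideal containing $J$, and $X \cup \{e\} = (I \cup \{e\}) \setminus J \in \I(P)$. Otherwise $I = P$ and $J \neq \emptyset$ (since $X \neq P$), so take $e$ a maximal element of $J$; then $J \setminus \{e\}$ is an order ideal, and $X \cup \{e\} = P \setminus (J \setminus \{e\}) \in \I(P)$. In either case we have produced the required $e \in P \setminus X$.

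The main obstacle is the structural lemma relating interval-closed sets to differences of order ideals; beyond that the argument is bookkeeping. An alternative, more direct route for (3) is to argue by cases on whether some element of $P \setminus X$ is unrelated to $X$ versus having a minimal element of $P\setminus X$ above $X$ or a maximal element of $P\setminus X$ below $X$, but in my view the detour through order ideals is cleaner and also clarifies the relationship between $\I(P)$ and the order-ideal setting that appears elsewhere in the paper.
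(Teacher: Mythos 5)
Your proof is correct, but it takes a genuinely different route for condition (3) than the paper does. The paper verifies (1) and (2) exactly as you do, but for (3) it argues by iteration: starting from an arbitrary $x \in P \setminus X$, if $X \cup \{x\}$ is not interval-closed it picks an element $y_1$ forced into the induced interval-closed set, then tries $X \cup \{y_1\}$, and so on, asserting that finiteness and acyclicity of the partial order guarantee termination at some $y_i$ with $X \cup \{y_i\} \in \I(P)$. Your approach instead proves the structural lemma that the interval-closed sets are exactly the differences $I \setminus J$ of nested order ideals $J \subseteq I$, and then produces the required element directly: a minimal element of $P \setminus I$ when $I \neq P$, or a maximal element of $J$ when $I = P$. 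Both arguments are sound, and your case analysis and the two directions of the lemma all check out (in particular, the verification that $J = I \setminus X$ is an order ideal correctly uses the interval-closed property of $X$). What your route buys is that the termination issue the paper waves at is replaced by two standard one-step facts about order ideals, and the lemma itself makes explicit the relationship between $\I(P)$ and $J(P)$ that is only implicit elsewhere in the paper; what the paper's route buys is brevity and the avoidance of any auxiliary characterization. If you write yours up, state the structural lemma separately, since it is of independent use.
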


\begin{proof}
The empty set and $P$ are both convex sets, so condition (1) of Definition~\ref{def:cxgeom} is satisfied. The intersection of any two interval-closed sets is also interval-closed, satisfying condition (2). Condition (3) is satisfied, since, given $I\in\I(P)$, $I\neq P$, and $x\in P$, if $I\cup\{x\}\notin\I(P)$, then there must be $y_1\in P$ such that the interval-closed set induced by $I\cup\{x\}$ contains $y_1$. But then either $I\cup\{y_1\}\in\I(P)$ or there exists $y_2\in P$ such that the interval-closed set induced by $I\cup\{y_1\}$ contains $y_2$. Since the poset is finite and the partial order is acyclic, after a finite number of steps we will find $y_i$ such that $I\cup\{y_i\}\in\I(P)$. 
\end{proof}

\begin{remark} \rm
One might speculate that the set of convex geometries $\mathcal{L}$ that are not a toggle-disjoint Cartesian product would be inductively toggle-alternating.
Proposition~\ref{prop:ic_convgeom} shows this is not the case, since in Remark~\ref{remark:icnotpleasant} we exhibited a poset $P$ for which 
$\mathcal{IC}(P)$ is toggle-alternating, but not inductively toggle-alternating. 
So we cannot use Theorem~\ref{thm:toggpstructure} to obtain a structure theorem for all convex geometries. It may be possible to characterize an inductively toggle-alternating family of convex geometries, as we did for interval-closed sets in Corollary~\ref{cor:icsets}; we leave this to future work.
\end{remark}

In the next section, we define a convex closure map and relate it to rowmotion.

\section{Cover-closure and rowmotion}
\label{sec:row}
In this section, we define a map we call \emph{cover-closure} on any family of subsets $\mathcal{L}\subseteq 2^E$ 
with a closure operator. We show in Lemma~\ref{lem:ccrow} that if $\mathcal{L}$ is the set of order ideals of a  poset,  cover-closure is exactly the rowmotion action of~\cite{fonderflaass,prorow}. We prove in Theorem~\ref{thm:row} that if 
cover-closure is bijective, 
then $\mathcal{L}$ must be in bijection with the set of order ideals for some poset, thus rowmotion is the only bijective cover-closure map.

\begin{definition}
\label{def:closure}
Given a ground set $E$, a \emph{closure operator} is a function $\tau:2^E\rightarrow 2^E$ that satisfies the following for all $X,Y\subseteq E$:
\begin{enumerate}
\item $X\subseteq\tau(X)$ (\emph{extensivity});
\item $X\subseteq Y$ implies $\tau(X)\subseteq\tau(Y)$ (\emph{monotonicity}); and
\item $\tau(\tau(X))=\tau(X)$ (\emph{idempotence}).
\end{enumerate}
A \emph{closed set} with respect to $\tau$ is a set $X\subseteq E$ for which $\tau(X)=X$.
\end{definition}

\begin{example} \rm
\label{ex:oi_closure}
A standard closure operator on a poset $P$, is given by $\tau(X)$, for $X\in 2^P$, equals the intersection of all the order ideals in $J(P)$ containing $X$.
Then the set of closed sets of $\tau$ is precisely $J(P)$.
\end{example}

\begin{remark} \rm
Note a convex geometry may be alternatively characterized as a set with a closure operator $\tau$ satisfying the additional property (called the \emph{anti-exchange axiom}) that if neither $e$ nor $f$ belong to $\tau(X)$, but $f$ belongs to $\tau(X\cup \{e\})$, then $e$ does not belong to $\tau(X\cup\{f\})$. See~\cite{EdelmanJamison}.
\end{remark}

Using the closure operator of Definition~\ref{def:closure}, we define the following map.
\begin{definition}
\label{def:cc}
Let $E$ be a  set with closure operator $\tau$ whose set of closed sets is $\mathcal{L}\subseteq 2^E$. 
For $X\in 2^E$, 
define $\cov(X)=\{e\in E\setminus X \ | \ X\cup\{e\}\in\mathcal{L} \}$.
Call $\cov(X)$ the set of \emph{covers} of $X$.
Then we define \emph{cover-closure} $\cc:\mathcal{L} \rightarrow \mathcal{L}$ as $\cc(X)=\tau(\cov(X))$.
\end{definition}

\begin{remark} \rm
Note that if $X\in\mathcal{L}$, $\cov(X)$ is the set of labels of the edges of the covers of $X$ in the toggle poset $\mathcal{P}_{\mathcal{L}}$ of Definition~\ref{def:Lposet}.
\end{remark}

\begin{example} \rm
\label{ex:notbij}
Let $E=\left\{1,2,3,4\right\}$. Define $\tau$  as the closure operator on $E$ with set of closed sets  $\mathcal{L}=\{\emptyset, \{1\}, \{2\}, \{3\}, \{4\},  \{1,2\}, \{1,3\}, \{2,3\} ,\{2,4\}, \{3,4\},  \{1,2,3\},$ $\{2,3,4\}, \{1,2,3,4\}\}$
and such that $\tau(\{1,4\})=\tau(\{1,2,4\})=\tau(\{1,3,4\})$ $=E$.  
Then cover-closure $\cc$ on $\mathcal{L}$ is given in Figure~\ref{fig:cc}.
\end{example}

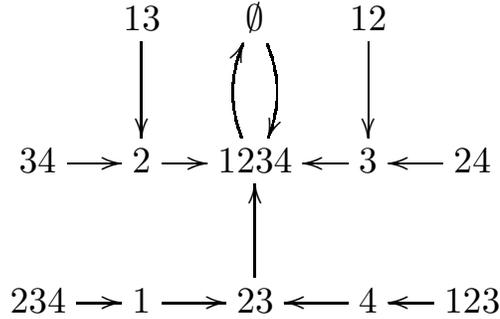
\begin{figure}[htbp]
\begin{center}
\scalebox{1.4}{
$$ \xymatrix @-1.2pc {
& 13 \ar[dd] & \emptyset  \ar@/^/[dd]  & \ar[dd] 12 & \\
&&&&\\
34 \ar[r] & 2\ar[r] & 1234 \ar@/^/[uu] & \ar[l] 3 & \ar[l] 24 \\
&&&&\\
234 \ar[r] & 1\ar[r] & 23\ar[uu] & 4\ar[l] & \ar[l] 123
}$$}
\end{center}
\caption{The cover-closure map $\cc$ on $\mathcal{L}$, where $\mathcal{L}$ is as in Example~\ref{ex:notbij}.}
\label{fig:cc}
\end{figure}

The following lemma shows that cover-closure on the set of order ideals $J(P)$ is the \emph{rowmotion} action of~\cite{prorow}.
\begin{lemma}
\label{lem:ccrow}
If $\mathcal{L}=J(P)$ for some finite poset $P$ and $\tau(X)$ is the intersection of all the order ideals in $J(P)$ containing $X$, then cover-closure $\cc:\mathcal{L}\rightarrow \mathcal{L}$ is rowmotion on $J(P)$.
\end{lemma}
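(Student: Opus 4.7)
The plan is to unpack both sides of the identity $\cc(I)=\tau(\cov(I))$ and match them with the definition of rowmotion given in Definition~\ref{def:row}, namely the order ideal generated by the minimal elements of $P\setminus I$. The proof reduces to two small observations, so there is no serious obstacle; the main task is simply to verify both carefully.

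First I would identify $\cov(I)$ with the set of minimal elements of $P\setminus I$. By Definition~\ref{def:cc}, $\cov(I)=\{p\in P\setminus I : I\cup\{p\}\in J(P)\}$. An element $p\notin I$ can be adjoined to the order ideal $I$ to produce another order ideal precisely when every element strictly below $p$ already belongs to $I$. Equivalently, $p$ has no element of $P\setminus I$ strictly below it, which is exactly the condition that $p$ is a minimal element of the induced subposet $P\setminus I$. Hence $\cov(I)$ equals the set of minimal elements of $P\setminus I$.

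Second I would verify that the closure operator $\tau$ of Example~\ref{ex:oi_closure} sends any subset $S\subseteq P$ to the order ideal generated by $S$. By definition $\tau(S)$ is the intersection of all order ideals of $P$ containing $S$, and since $J(P)$ is closed under arbitrary intersections (the intersection of order ideals is an order ideal), this intersection is the unique smallest order ideal containing $S$, i.e.\ the down-set $\{q\in P : q\leq s\text{ for some }s\in S\}$, which is by definition the order ideal generated by $S$.

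Combining the two observations, $\cc(I)=\tau(\cov(I))$ is the order ideal generated by the minimal elements of $P\setminus I$, which is exactly rowmotion of $I$ as given in Definition~\ref{def:row}. This completes the proof.
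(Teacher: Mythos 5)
Your proposal is correct and follows essentially the same route as the paper: the paper's proof simply asserts that under these definitions cover-closure coincides with rowmotion, while you spell out the two underlying identifications ($\cov(I)$ with the minimal elements of $P\setminus I$, and $\tau$ with the order-ideal-generated-by operator). Both verifications are accurate, so your write-up is just a more detailed version of the paper's argument.
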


\begin{proof}
Rowmotion on an order ideal $I$ of a poset $P$ is defined as the order ideal generated by the minimal elements of $P\setminus I$. (See Definition~\ref{def:row}.) Thus if $\mathcal{L}=J(P)$ and $\tau(X)$ is the intersection of all order ideals containing $X$, we have that cover-closure $\cc$ is exactly rowmotion.
\end{proof}

The following theorem characterizes bijective cover-closure maps. 

\begin{theorem}
\label{thm:row}
Given a finite ground set $E$ and a closure operator $\tau$ with set of closed sets $\mathcal{L}\subseteq 2^E$,
cover-closure $\cc=\tau\circ\cov:\mathcal{L}\rightarrow\mathcal{L}$ is bijective if and only if 
$\mathcal{L}$ is isomorphic to the set of order ideals $J(P)$ for some poset~$P$.
\end{theorem}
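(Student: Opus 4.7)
The forward direction is immediate from Lemma~\ref{lem:ccrow}: if $\mathcal{L}\cong J(P)$ for some finite poset $P$, then cover-closure equals rowmotion on $J(P)$, and as stated in Section~\ref{sec:orderideals}, rowmotion can be written as the product of all toggles $t_p$ in reverse-linear-extension order, hence is a permutation of $\mathcal{L}$.

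For the reverse direction, my strategy is to show that bijectivity of $\cc$ forces $\mathcal{L}$, as a family of subsets of $E$, to be closed under both intersection and union. Once this is done, the classical observation that families of subsets closed under $\cap$ and $\cup$ are (up to essentialization) exactly sets of order ideals produces a poset $P$ on $E'$---the join-irreducibles of $\mathcal{L}'$ identified with elements $e \in E'$ via the principal closed sets $\tau(\{e\})$---for which $\mathcal{L}' \cong J(P)$. Two reductions are free. First, closure under intersection is automatic: for $X, Y \in \mathcal{L}$, monotonicity of $\tau$ gives $\tau(X \cap Y) \subseteq \tau(X) \cap \tau(Y) = X \cap Y$, so $X \cap Y \in \mathcal{L}$. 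Second, $\mathcal{L}$ has a unique maximum $\hat{1}$ under containment, since any containment-maximal element $X$ satisfies $\cov(X) = \emptyset$ and therefore $\cc(X) = \tau(\emptyset) = \hat{0}$, the unique minimum of $\mathcal{L}$; injectivity of $\cc$ then forces uniqueness of $\hat{1}$.

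The core step is showing $\mathcal{L}$ is closed under union, which I would prove by contradiction. Assume $X, Y \in \mathcal{L}$ satisfy $X \cup Y \notin \mathcal{L}$, and pick such a pair with $|X \cup Y|$ as small as possible. Let $Z = \tau(X \cup Y) \supsetneq X \cup Y$. Since $\cov(W)$ determines $\cc(W) = \tau(\cov(W))$, locating any two distinct $W_1, W_2 \in \mathcal{L}$ with $\cov(W_1) = \cov(W_2)$ immediately violates injectivity. The plan is to use the minimality of the counterexample $(X,Y)$---together with the forced extra element in $Z \setminus (X \cup Y)$---to exhibit exactly such a collision. The guiding intuition comes from Example~\ref{ex:notbij}, where the atoms $\{1\}$ and $\{4\}$ end up with identical cover sets $\{2,3\}$ precisely because the missing union $\{1,4\}$ blocks each from seeing the other in $\mathcal{L}$.

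I expect the main obstacle to be making this collision argument uniform across all local configurations of a missing union, since the pair $(W_1, W_2)$ witnessing the violation of injectivity need not equal $(X, Y)$ itself but may only be locatable after carefully tracing the cover structure around $Z$. An alternative route would be an induction on $|\mathcal{L}|$ showing that bijectivity of $\cc$ forces $\mathcal{L}$ to be \emph{accessible}, in the sense that the toggle poset $\mathcal{P}_{\mathcal{L}}$ coincides with the containment order on $\mathcal{L}$ (compare Example~\ref{ex:togposetnote}); combined with meet-closure, this would be enough to realize $\mathcal{L}$ inside $2^E$ as a distributive sublattice and then conclude $\mathcal{L}' \cong J(P)$ as families of subsets.
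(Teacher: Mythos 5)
Your forward direction, your intersection-closure argument, and your observation that injectivity forces a unique containment-maximal element are all correct and essentially match the paper. The problem is that the heart of the theorem --- that bijectivity of $\cc$ forces $\mathcal{L}$ to be closed under union --- is exactly the step you leave as a plan rather than a proof. You propose to take a minimal non-union $X\cup Y\notin\mathcal{L}$ and exhibit two distinct closed sets $W_1,W_2$ with $\cov(W_1)=\cov(W_2)$, but you do not construct $W_1,W_2$, and you yourself flag that you do not know how to make the collision argument ``uniform across all local configurations.'' Note also that a failure of injectivity need not manifest as equal cover sets: one could a priori have $\cov(W_1)\neq\cov(W_2)$ with $\tau(\cov(W_1))=\tau(\cov(W_2))$, so even the target of your contradiction is not pinned down. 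As it stands this is a genuine gap, not a routine detail.

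For comparison, the paper does not argue locally at all. It first uses injectivity to show the essentialization $\mathcal{L}'$ is a convex geometry (your unique-maximum observation is a special case of its condition (3) argument), then proves the containment $\rem(\cc(X))\subseteq\cov(X)$ for the set $\rem(X)=\{e\in X \mid X\setminus\{e\}\in\mathcal{L}'\}$, and upgrades it to equality by a \emph{global} counting argument: both $\sum_X|\cov(X)|$ and $\sum_X|\rem(\cc(X))|=\sum_X|\rem(X)|$ equal the number of edges of the Hasse diagram of the toggle poset $\mathcal{P}_{\mathcal{L}'}$. The identity $\rem(\cc(X))=\cov(X)$ then shows that cover-closure on the complementary family $\widehat{\mathcal{L}'}=\{E'\setminus X\}$ is also bijective, so the dual is a convex geometry, hence intersection-closed, hence $\mathcal{L}'$ is union-closed; the fundamental theorem of finite distributive lattices finishes. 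If you want to salvage your outline, the most promising route is to replace the minimal-counterexample collision hunt with this counting-plus-duality mechanism, or else to actually prove that a missing union always produces two closed sets with the same image under $\cc$ --- which, given that the paper resorts to a global argument, is likely harder than your sketch suggests.
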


\begin{proof}
Let $E$ be a finite ground 
set with closure operator $\tau$ and set of closed sets $\mathcal{L}\subseteq 2^E$.
By Lemma~\ref{lem:ccrow}, if $\mathcal{L}=J(P)$, then cover-closure is rowmotion, which is bijective. It is left to show that if cover-closure on $\mathcal{L}$ is bijective, then there exists a poset $P$ such that $\mathcal{L}$ is isomorphic to the set of order ideals $J(P)$.

Suppose $\cc:\mathcal{L}\rightarrow\mathcal{L}$ is bijective. We first show $\mathcal{L}$ is isomorphic to a convex geometry (see Definition~\ref{def:cxgeom}).
First, $\tau(E)=E$ by extensivity, so $E\in\mathcal{L}$. Now suppose $\tau(\emptyset)=A\neq\emptyset$. Then $A\subseteq X$ for all $X\in\mathcal{L}$ by monotonicity of $\tau$, therefore no element of $A$ is in the essentialized ground set $E'$. Thus we work in the essentialization $(\mathcal{L}',E')$ with closure operator $\tau':2^{E'}\rightarrow 2^{E'}$ such that $\mathcal{L}'$ is the set of closed sets of $\tau'$. 
Thus, $\tau'$ satisfies $\tau'(\emptyset)=\emptyset$.
So $\emptyset$ is in $\mathcal{L}'$, $\mathcal{L}'$ satisfies condition (1) of Definition~\ref{def:cxgeom}, 
and $\mathcal{L}'$ is isomorphic to $\mathcal{L}$, which implies their toggle posets $\mathcal{P}_{\mathcal{L}}$ and $\mathcal{P}_{\mathcal{L}'}$ are isomorphic.

From now on, we consider $\cc:\mathcal{L}'\rightarrow\mathcal{L}'$ to be $\tau'\circ\cov$, 
which is a bijection since $\cc$ is a bijection on $\mathcal{L}$.
Since $\mathcal{L}'$ is the set of closed sets of $\tau'$, the toggle poset $\mathcal{P}_{\mathcal{L}'}$ must be a lattice, where the meet and join are given as $X\wedge Y=\tau'(X\cap Y)=X\cap Y$ and $X\vee Y = \tau'(X\cup Y)$. Note that $\tau'(X\cap Y)=X\cap Y$ for $X,Y\in\mathcal{L}'$ holds by the following argument: $X\cap Y\subseteq \tau'(X\cap Y)$ by extensivity, while $\tau'(X\cap Y)\subseteq \tau'(X)$ and $\tau'(X\cap Y)\subseteq \tau'(Y)$ by monotonicity. $\tau'(X)=X$ and $\tau'(Y)=Y$ since $X,Y\in\mathcal{L}'$, so $\tau'(X\cap Y)\subseteq X\cap Y$; thus $\tau'(X\cap Y)=X\cap Y$. 
Therefore, $\mathcal{L}'$ satisfies condition $(2)$ of Definition~\ref{def:cxgeom}. 

Since cover-closure is a bijection, there must be a one-to-one correspondence between $X$ and $\cov(X)$ for all $X\in\mathcal{L}'$. In particular, the set $\cov(X)$ is unique for each $X$, otherwise $\cc$ would map two distinct elements of $\mathcal{L}'$ to the same image.
Now for each $X\in\mathcal{L}'$ such that $X\neq E'$, there exists $e\in E'\setminus X$ such that $X\cup\{e\}\in \mathcal{L}'$, otherwise $\cov(X)=\emptyset$ and $\cc(X)=\emptyset=\cc(E')$ which would be a contradiction. So condition (3) of Definition~\ref{def:cxgeom} is satisfied, and $\mathcal{L}'$ is a convex geometry.

Now to show $\mathcal{L}'$ is, furthermore, the set of order ideals $J(P)$ for some poset~$P$, we need only show that for $X,Y\in \mathcal{L}'$, $\tau'(X\cup Y)=X\cup Y$, that is, $\mathcal{L}'$ is closed under set union. This will imply that  $\mathcal{P}_{\mathcal{L}'}$ is a distributive lattice, since the join and meet are given by $X\vee Y = X\cup Y$ and $X\wedge Y = X\cap Y$. Then by the fundamental theorem of finite distributive lattices (see e.g.~\cite{Stanley_1}), $\mathcal{L}'$ will be the set of order ideals $J(P)$ for some poset~$P$.

If there is some set $Y$ such that $\tau'(Y)=Z\cup\{e\}$ for some $Z\in \mathcal{L}'$, $e \in E'\setminus Z$,
then $e$ must be in $Y$, or else $\tau'(Y)$ would be at most $Z$, since $Z$ is a closed set smaller than $Z\cup\{e\}$ containing $Y$. 
Thus, given $X\in \mathcal{L}'$, if $\cc(X)=Z\cup\{e\}$ for some $Z\in \mathcal{L}'$, $e \in E'\setminus Z$, then $e$ is in $\cov(X)$, since $\cc(X)=\tau'(\cov(X)))$. 
Therefore, $\cov(X)$ contains all the elements that we can remove singly from $\cc(X)$ and remain in $\mathcal{L}'$. 
Denote the set of such elements as $\rem(X):=\left\{e\in X \mid X\setminus\{e\}\in\mathcal{L}'\right\}$. So we have $\rem(\cc(X))\subseteq \cov(X)$ for all $X\in\mathcal{L}'$. 
We wish to show $\rem(\cc(X)) = \cov(X)$ 

Now $\displaystyle\sum_{X\in\mathcal{L}'}\cov(X)$ equals the number of edges in the Hasse diagram of $\mathcal{P}_{\mathcal{L}'}$. Since $\cc$ is a bijection, $\displaystyle\sum_{X\in\mathcal{L}'}\rem(\cc(X)) = \displaystyle\sum_{X\in\mathcal{L}'}\rem(X)$, which also equals the number of edges in the Hasse diagram of $\mathcal{P}_{\mathcal{L}'}$. So $\displaystyle\sum_{X\in\mathcal{L}'}\rem(\cc(X)) = \displaystyle\sum_{X\in\mathcal{L}'}\cov(X)$, but $\rem(\cc(X))\subseteq \cov(X)$ for all $X\in\mathcal{L}'$ and $\mathcal{L}'$ is a finite set, so it must be that $\rem(\cc(X))=\cov(X)$ for all $X\in\mathcal{L}'$.

Consider the dual $\widehat{\mathcal{P}}_{\mathcal{L}'}$ of the lattice $\mathcal{P}_{\mathcal{L}'}$. Let $\widehat{\mathcal{L}'}:=\left\{E'\setminus X \mid X\in\mathcal{L}'\right\}$. Then  $\widehat{\mathcal{L}'}$ is the set of closed sets for the dual closure operator $\widehat{\tau'}$. Cover-closure may be defined on $\widehat{\mathcal{L}'}$
as $\widehat{\cc}:\widehat{\mathcal{L}'}\rightarrow \widehat{\mathcal{L}'}$, $\widehat{\cc}=\widehat{\tau'}\circ\widehat{\cov}$ where for $E'\setminus X\in\widehat{\mathcal{L}'}$, $\widehat{\cov}(E'\setminus X)=\rem(X)=\cov(\cc^{-1}(X))$, so $\widehat{\cc}$ 
is bijective. By the same arguments as for $\mathcal{L}'$ above, $\widehat{\mathcal{L}'}$ is a convex geometry, and so is closed under set intersection. Therefore, ${\mathcal{L}'}$ is closed under set union, and so $\mathcal{P}_{\mathcal{L}'}$ is a distributive lattice. Then since $\mathcal{P}_{\mathcal{L}'}$ and $\mathcal{P}_{\mathcal{L}}$ are isomorphic, $\mathcal{P}_{\mathcal{L}}$ is also a distributive lattice, and the theorem follows.
\end{proof}

\begin{corollary}
\label{cor:rowcc}
Rowmotion is the only bijective cover-closure map on a finite ground set.
\end{corollary}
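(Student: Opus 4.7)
The plan is to derive the corollary as an immediate consequence of Theorem~\ref{thm:row} together with Lemma~\ref{lem:ccrow}. Suppose $\cc : \mathcal{L} \rightarrow \mathcal{L}$ is a bijective cover-closure map associated to a closure operator $\tau$ on a finite ground set $E$ with closed sets $\mathcal{L} \subseteq 2^E$. By Theorem~\ref{thm:row}, $\mathcal{L}$ must be isomorphic, as a family of subsets, to the set of order ideals $J(P)$ for some finite poset $P$. Lemma~\ref{lem:ccrow} then identifies cover-closure on $J(P)$ with rowmotion. Transporting $\cc$ across the isomorphism from $\mathcal{L}$ to $J(P)$ exhibits it as the rowmotion action on $J(P)$, which is exactly the content of the corollary.

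The only point I would make explicit is that the isomorphism furnished by Theorem~\ref{thm:row} does not merely match $\mathcal{L}$ with $J(P)$ as a family of subsets but also carries the cover-closure dynamics across. This holds because cover-closure is determined entirely by (i) the toggle poset $\mathcal{P}_{\mathcal{L}}$, which records the allowed one-element moves and hence the covers $\cov(X)$ for each $X \in \mathcal{L}$, and (ii) the closure operator $\tau$, which on the essentialization $(\mathcal{L}',E')$ is recovered as intersection of closed supersets (as in Example~\ref{ex:oi_closure}). Both of these pieces of data transport through the essentialization and the bijection $\varphi:E' \rightarrow P$ constructed in the proof of Theorem~\ref{thm:row}, so the transported map on $J(P)$ is literally cover-closure, hence rowmotion by Lemma~\ref{lem:ccrow}.

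The hard work has already been done in Theorem~\ref{thm:row}, which shows that bijectivity of $\cc$ forces $\mathcal{L}$ to be closed under both intersections and unions, so that $\mathcal{P}_{\mathcal{L}}$ is a distributive lattice and the fundamental theorem of finite distributive lattices applies. The corollary is the clean packaging of that result, and no additional obstacle arises.
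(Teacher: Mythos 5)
Your proposal is correct and follows exactly the route the paper intends: the paper states Corollary~\ref{cor:rowcc} with no separate proof, treating it as an immediate consequence of Theorem~\ref{thm:row} combined with Lemma~\ref{lem:ccrow}, which is precisely your argument. Your extra remark that the closure operator is recoverable from $\mathcal{L}'$ as intersection of closed supersets (so the dynamics genuinely transport across the isomorphism) is a worthwhile clarification, and it is valid since for any closure operator $\tau(X)$ is the smallest closed set containing $X$.
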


\acknowledgements{The author thanks David Einstein for helpful conversations on Theorem~\ref{thm:row}, Vic Reiner for suggesting the extension of the toggle group to convex geometries, and Nathan Williams for introducing her to the toggle group. The author also thanks the developers of \verb|SageMath|~\cite{sage} and \verb|GAP|~\cite{gap} for sharing their code by which some of this research was conducted, the developers of \verb|CoCalc|~\cite{SMC} for making \verb|SageMath| more accessible, and the anonymous referees for helpful comments.}

\appendix
\section{SageMath code for computing toggle group structure}
\label{sec:appendix}

We give below the \verb|SageMath|~\cite{sage} code we wrote to check all the base cases of this paper. (This code should work in either \verb|CoCalc|~\cite{SMC} or your local installation of \verb|SageMath|~\cite{sage}.) The input L (denoted $\mathcal{L}$ in this paper) is a list of subsets of the ground set $\mathrm{E}=[1,2,3,4]$. To compute the toggle group on a larger ground set, change the number $5$ in the third line of the first function to an appropriate larger integer. 

For each toggle $t_e$, $e\in$ E, the first function iterates through L and notes whenever it finds two subsets that differ by a single toggle. The two subsets are at indices i and j in the list L (and the indexing starts at 0), so the transposition $(i+1,j+1)$ is added to the permutation for the toggle $t_e$. The function \verb|toggle_group_permutations| continues in this way and returns a list of permutations corresponding to the toggles. The second function, \verb|toggle_group_structure|, calls \verb|GAP|~\cite{gap} from within \verb|SageMath| to construct the group and output its structure description (given in \verb|GAP| syntax).

\begin{lstlisting}[language=Python]
def toggle_group_permutations(L):
    togglepermutations = []
    for tog in range(1,5):
        togperm = []
        for i in range(len(L)):
            for j in range(len(L)):
                if L[j].issubset(L[i]):
                    if list(L[i].symmetric_difference(L[j])) = = [tog]:
                        togperm.append((i+1,j+1))
        togglepermutations.append(Permutation(togperm).cycle_string())
    return togglepermutations

def toggle_group_structure(L):
    groupgens = toggle_group_permutations(L)
    mygroup = PermutationGroup(groupgens)
    return mygroup.structure_description()
\end{lstlisting}

\medskip

As an example, to verify Example~\ref{ex:not_sn_an}, run the following: 

\begin{lstlisting}[language=Python]
L = [set({}),{1},{1,2},{1,2,3},{1,2,3,4},{2,3,4},{3,4},{4}]
toggle_group_permutations(L)
toggle_group_structure(L)
\end{lstlisting}

The output will be:

\medskip

\noindent
$ [ \ \text{\textquotesingle}(1,2)(5,6)\text{\textquotesingle}, \ \text{\textquotesingle}(2,3)(6,7)\text{\textquotesingle}, \ \text{\textquotesingle}(3,4)(7,8)\text{\textquotesingle}, \ \text{\textquotesingle}(1,8)(4,5)\text{\textquotesingle} \ ] $

\medskip

\noindent
$\text{\textquotesingle} \ ( \ ( \ (\textrm{C}2 \ \times \ \textrm{D}4) \ : \ \textrm{C}2) \ : \ \textrm{C}3) \ : \ \textrm{C}2 \ \text{\textquotesingle}$

\medskip
\end{document}